\date{}
\newcommand{\be}{\begin{equation}}
\newcommand{\ee}{\end{equation}}
\def\la{\langle}
\def\ra{\rangle}
\def\R{\mathbb{R}}
\def\C{\mathbb{C}}
\renewcommand{\Re}{\text{{\rm Re}\;}}
\renewcommand{\Im}{\text{{\rm Im}\;}}
\def\O{\mathcal{O}}
\newtheorem{theorem}{Theorem}[section]
\newtheorem{lemma}[theorem]{Lemma}
\newtheorem{proposition}[theorem]{Proposition}
\theoremstyle{definition}
\newtheorem{remark}[theorem]{Remark}
\numberwithin{equation}{section}
\title[multidimensional molecular predissociation]{Widths of highly excited resonances in multidimensional molecular predissociation}
\author[ 
Andr\'e MARTINEZ \& Vania SORDONI]{ 
Andr\'e MARTINEZ$ {}^1$ \& Vania SORDONI${}^1$
 }
\begin{document}

\maketitle 
\addtocounter{footnote}{1}
\footnotetext{{\tt\small Universit\`a di Bologna,  
Dipartimento di Matematica, Piazza di Porta San Donato, 40127
Bologna, Italy, 
andre.martinez@unibo.it }\\Partly supported by Universit\`a di Bologna, Funds for Selected Research Topics}  
\begin{abstract}
We investigate the simple resonances of a 2 by 2 matrix of n-dimensional semiclassical Shr\"odinger operators that interact through a first order differential operator. We assume that one of the two (analytic) potentials admits a well with non empty interior, while the other one is non trapping and creates a barrier between the well and infinity. Under a condition on the resonant state inside the well, we find an optimal lower bound on the width of the resonance. The method of proof relies on Carleman estimates, microlocal propagation of the microsupport, and a refined study of a non involutive double characteristic problem in the framework of Sj\"ostrand's analytic microlocal theory.
\end{abstract}  
\vfill
{\it Keywords:} Resonances; Born-Oppenheimer approximation; eigenvalue crossing; microlocal analysis.

{\it Subject classifications:} 35P15; 35C20; 35S99; 47A75.

\baselineskip = 18pt 
\vfill\eject
\section{Introduction}
The mathematical study of molecular predissociation goes back to \cite{Kl}, where a general framework is set in order to study this physical phenomenon. This framework mainly consists of semiclassical matrix Schr\"odinger operators, where the semiclassical parameter $h$ is the square root of the ratio of electronic to nuclear masses, and where potential wells interact with unbounded classically allowed regions, giving rise to molecular resonances.

In the case of two interacting electronic levels, the standard Agmon-estimates method also provides a general upper bound for the widths of these resonances. However, only very few results exist concerning a lower bound of such quantities (let us recall that the physical importance of the resonance widths lies in the fact that their inverses represent the life-time of the corresponding metastable molecule).

To our knowledge, there are two cases only where such a lower bound is obtained. The first is when the resonance is close to the ground state of the bonding potential: see \cite{GrMa} where a complete semiclassical asymptotic of the width is obtained. The second one is when the resonance is highly excited and the dimension of the nuclei-space is one: see \cite{As}. In both cases, the lower bound is optimal, in the sense that it is of the same order of magnitude as the upper bound given by Agmon estimates.

The purpose of this paper is to extend Ashida's result \cite{As} to the case of multidimensional nuclei-space.

The geometrical structure of this situation is very similar to that of shape resonances, in the sense that one potential forms an energy well surrounded by a potential barrier (in which both potentials are involved), and immersed inside a sea corresponding to the second potential. And actually, both the strategy of the proof and the result (namely, the link between the width if the resonance and the size of the resonant state inside the well) are similar to that of \cite{DaMa} where excited shape resonances are studied. 

Roughly speaking, the starting point is the explicit link between the width if the resonance and the size of the resonant state inside the sea, and the strategy consists in relating reasoning by contradiction and in propagating the smallness of the resonant state from the inside of the sea up to the boundary of the well. In the case of shape resonances, this meant first crossing the shore (that is, the boundary of the sea), then propagating inside the barrier, and finally arriving at the boundary of the well. In the case of molecular predissociation, an extra difficulty appears: the crossing of the so called ``crest'', that is the place where the two potentials coincide. In contrast with the other types of propagation (that involve the principal symbol only), the crossing of the crest is very sensitive to the lower order interaction terms. Indeed, such a propagation would not take place without these terms, and the mathematical problem in itself has many to do with old results on propagation for scalar operators with non-involutive double-characteristics (see, e.g., \cite{LL, PeSo}). The difference is that here, after conjugation by the convenient weight, the principal symbols of two operators that are involved become complex-valued. Then, in order to prove the propagation, it becomes necessary to use a FBI transform (in the sense of \cite{Sj}), associated with a complex canonical transform, such that {\it both} operators take a simpler form.

Concerning the two other types of propagation (across the boundaries of the sea and of the well), their proofs mainly follow those of \cite{DaMa, Ma1}, with additional technical difficulties due to the fact that we deal with a matrix-operator (while those considered in \cite{DaMa, Ma1} are scalar).

The paper is organized as fallows: In Section 2, we describe our analytical and geometrical assumptions, and we recall some basic facts about the Agmon distance. Section 3 is devoted to the statement of our main results. In Section 4 we give some properties of the resonant state, and we establish the link between the resonance width and the size of the resonant state inside the sea. In Section 5 we specify this link through a contradiction argument, by transferring the smallness of the resonance width to the smallness of the resonant state inside the sea. Then this smallness is first propagated across the shore in Section 6, then across the crest in Section 7, and finally up to the well in Section 8. A converse result is proved in Section 9, and Section 10 concerns the possible examples of application.

\section{Assumptions}

We consider the semiclassical $2\times 2$ matrix Schr\"odinger operator,
\be
\label{operator}
P= 
\left(\begin{array}{cc}
P_1 & hW\\
hW^* & P_2
\end{array}\right) 
\ee
with,
$$
P_j =P_j(x, hD_x):= -h^2\Delta +V_j(x) \quad (j=1,2),
$$
where $x=(x_1,\dots ,x_n)$ is the current variable in $\R^n$ ($n\geq 1$),
$h>0$ denotes the semiclassical parameter,  $W=W(x,hD_x)$ is a first-order
semiclassical differential operator, and $W^*$ stands for the formal adjoint of $W$.

This kind of operator appears in the Born-Oppenheimer approximation of molecules, after reduction to an effective Hamiltonian (see \cite{KMSW, MaSo}). In that case, the quantity $h^2$ stands for the inverse of the mass of the nuclei.

{\bf Assumption 1.} {\sl The potentials $V_1$ and $V_2$ are smooth and 
bounded on $\R^n$,   and
satisfy,
\be
\label{assV1}
\begin{aligned}
&  \mbox{The set } U:= \{ V_1 \leq 0\} \mbox{ is compact and connected},\\
& \nabla V_1 \not =0 \mbox{ on the boundary } \partial U \mbox{ of } U, \mbox{ and } \liminf_{\vert x\vert\rightarrow\infty} V_1 >0;
\end{aligned}
\ee
\be
\label{assV2}
\begin{aligned}
& V_2\left|_{U}\right. >0\mbox{ and } E=0 \mbox{ is a non-trapping energy for } V_2; \\
&V_2 \; \mbox{has a strictly negative limit as} \; \vert x\vert \rightarrow \infty.
\end{aligned}
\ee
}
\begin{center}  
{\includegraphics[scale=0.40]{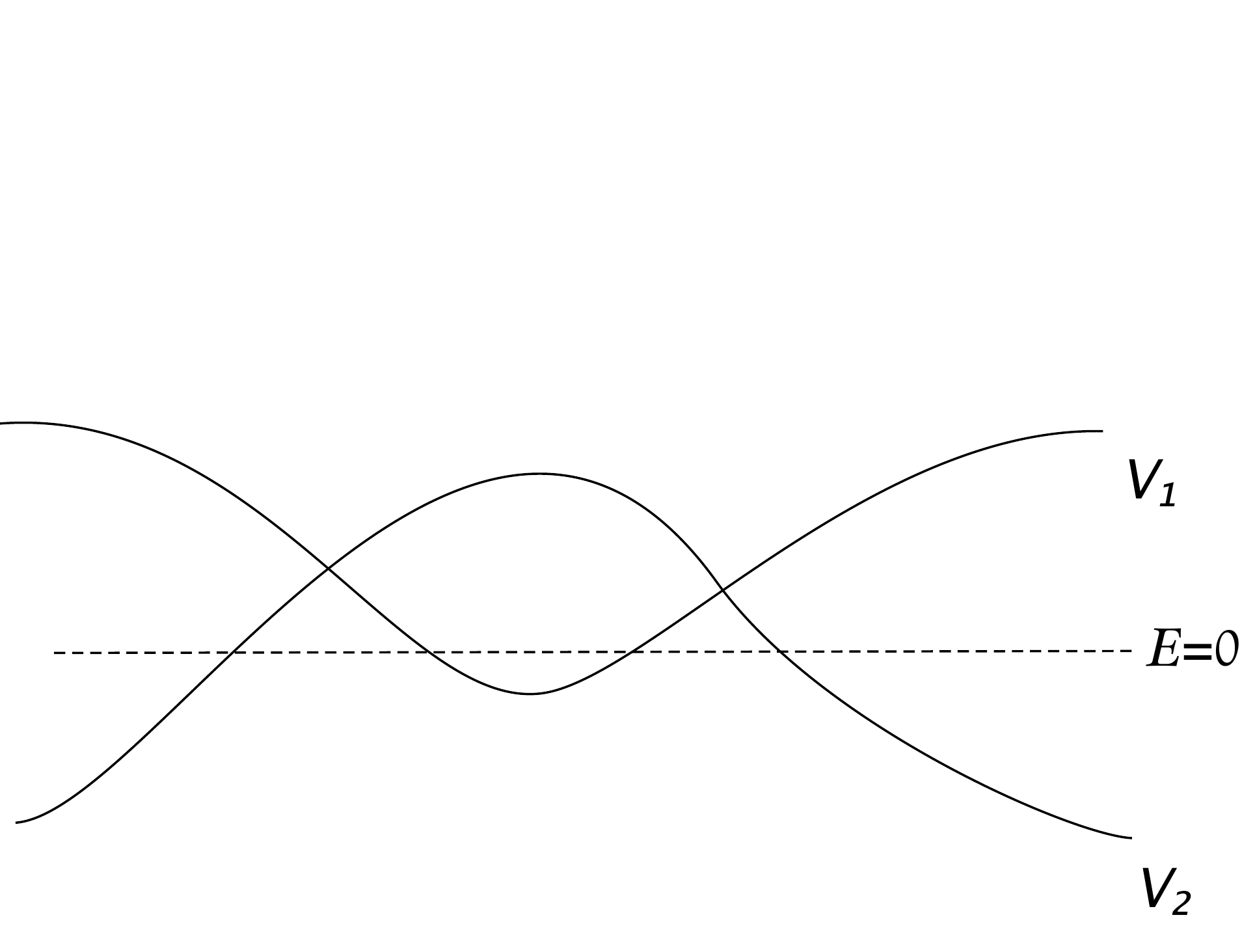}}\end{center}

We define the island  $\widehat{\mathcal I}$ as the bounded open connected component of  $\{V_2>0\}$ containing $U$,
and the sea as the unbounded closed set,
\be
\label{sea}
{\mathcal M}: = \R^n \backslash \widehat{\mathcal I},
\ee

With (\ref{assV1})-(\ref{assV2}), the well $U$ for $V_1$ is included in the island.

The fact that 0 is a non-trapping energy for $V_2$ means that, for any $(x,\xi)\in p_2^{-1}(0)$, one has $|\exp tH_{p_2}(x,\xi)|\rightarrow +\infty$ as $t\rightarrow \infty$, where $p_2(x,\xi):=\xi^2+V_2(x)$ is the symbol of $P_2$, and $H_{p_2}:=(\nabla_\xi \,p_2, -\nabla_x \,p_2)$ is the Hamilton field of $p_2$. 

Such conditions (\ref{assV1})-(\ref{assV2}) correspond to the situation of {\it molecular predissociation} as described in  \cite{Kl}.

We plan to study the resonances of $P$ near the energy level $E=0$. Since our methods will strongly rely on analytic microlocal analysis, we also add the following assumption of analyticity:

{\bf Assumption 2.}\label{Ass2} {\sl The potentials $V_1$ and $V_2$, together with the coefficients of $W$, extend to bounded holomorphic functions near a complex sector of the form,
${\mathcal S}_{\delta} :=\{x\in \C^n\, ;\,  \vert \Im x\vert \leq \delta |\Re x| \}$, with $\delta >0$. Moreover  $V_2$ tends to its limit at $\infty$ in this sector and $\Re V_1 $ stays away from $0$ outside a compact set of this sector.}

\begin{remark}\sl
As a matter of fact, it would have been possible for us to consider more general kinds of interactions, such as first-order pseudodifferential operators with analytic symbols (see, e.g., \cite{GrMa}). However, this would have just led to heavier notations and technical complications, without changing the key-ideas of the proof.
\end{remark}

Now, as in \cite{GrMa}, we define the cirque $\Omega_0$ as,
\be
\label{cirque}
\Omega_0 = \lbrace x \in \R^n; V_1 (x) < V_2 (x) \rbrace.
\ee
(Hence, the well is in the cirque and the cirque is in the island.)

Thereafter, the boundary $\partial \Omega_0$ of $\Omega_0$ will be called the crest, and the boundary $\partial{\mathcal M}$ of $\mathcal M$ will be called the shore.

 We also consider the Lithner-Agmon distance $d$ associated  to the pseudo-metric
 $$(\min(V_1, V_2))_+ dx^2.$$
 Such a metric is considered in \cite{Kl, GrMa, Pe}.  There are three places where this metric is not a standard smooth one.
 
At first, inside the well $U$ and the sea $\R^n\backslash \widehat{\mathcal I}$, where the metric degenerates completely.

Secondly on the crest $\partial \Omega_0$ (that is, at the points where $V_1 = V_2$). This case has been  considered in Pettersson \cite{Pe}.  At such points, if one assumes that $\nabla V_1 \not = \nabla V_2$, then  any geodesic that crosses transversally the hypersurface $\lbrace V_1= V_2 \rbrace$ is $C^1$ (and so is $\varphi$ near the crossing point).

Finally there are the boundaries of the island $\partial \widehat{\mathcal I}$ and of the well $\partial U$, where $\min (V_1,V_2)$ vanishes. This situation was considered in  \cite{HeSj2, Ma1}, and we will follow them in further constructions.

We denote by $\varphi$ the Lithner-Agmon distance to the well $U$,
\be
\label{agmondist}
\varphi (x) := d(U,x),
\ee
and we set,
\be
\label{S}
S_0 := d(U,{\mathcal M}).
\ee
We also consider the set,
$$
G:= \left\{ \overline{\gamma \cap (\widehat I \backslash U)}\, ;\, \gamma = \mbox{  minimal geodesics between } U \mbox{ and } {\mathcal M}\right\},
$$
which coincides with the set of minimal geodesics between $U$ and $\mathcal M$ that have only their end-points in $U\cup {\mathcal M}$. In particular, if $\gamma\in G$, then $\gamma \cap \mathcal M$ is a point of type 1  in the terminology of  \cite{HeSj2}.

Let us also recall that the assumption that $0$ is a non trapping energy for $V_2$ implies that $\nabla V_2  \not = 0$ on $\partial \widehat{\mathcal I}$, and therefore that $\partial \widehat{\mathcal I}$ is a smooth hypersurface.

\section{Main Results}
\label{secres}

Under the previous assumptions we plan to study the resonances of the operator $P$ given in (\ref{operator}).

In order to define the resonances we consider the distortion given as follows: Let $F(x) \in C^\infty (\R^n, \R^n)$ such that $F(x) = 0$ for $\vert x \vert \leq R_0$, $F(x) = x$ for $\vert x\vert$ large enough. For $\theta >0$ small enough, we define the distorded operator $P_{\theta}$ as the value at $ \nu = i \theta$ of  the extension to the complex of the  operator  $U_\nu P U_\nu^{-1}$,  defined for $\nu$ real small enough, and analytic in $\nu$, where we have set
\be
U_\nu \phi(x) = \det ( 1 + \nu dF(x))^{1/2} \phi ( x + \nu F(x)).
\ee 
By using the Weyl Perturbation Theorem, one can see  that there exists $\varepsilon_0 >0$ such that for any $\theta >0$ small enough, the spectrum of $P_\theta$ is discrete in $[ -\varepsilon_0, \varepsilon_0 ] - i [0, \varepsilon_0 \theta]$. The eigenvalues of $P_\theta$ are called the resonances of $P$ \cite{Hu, HeSj2, HeMa, Kl}.

In this paper we are interested in the imaginary part of these resonances. 

For some fixed $\delta >0$ arbitrarily small, let $\widetilde V_2$ be a $C^\infty$ function that coincides with $V_2$ on $\{ {\rm dist}(x,{\mathcal M})\geq \delta\}$, and such that $\inf_{\R^n}\widetilde V_2 >0$. By adapting the techniques used in \cite[Section 9]{HeSj2}, it is not difficult to see that the resonances of $P$ near 0 are exponentially close to the eigenvalues of the operator,
\be
\label{Ptilde}
\widetilde P := 
\left(\begin{array}{cc}
-h^2\Delta +V_1  & hW\\
hW^* & -h^2\Delta +\widetilde V_2
\end{array}\right),
\ee
where the precise meaning is the following one:
Let $I(h)$ be a closed interval  containing 0, and $a(h)>0$ such that $a(h)\to 0$ as $h\to 0_+$, and, for all $\varepsilon >0$ there exists $C_\varepsilon >0$ satisfying,
\be
\label{anonexp}
a(h)\geq \frac1{C_\varepsilon} e^{-\varepsilon /h};
\ee
\be
\label{gap1}
\sigma (\widetilde P)\cap \left( (I(h)+[-2a(h), 2a(h)])\backslash I(h)\right) =\emptyset,
\ee
for all $h>0$ small enough. Then, there exists a constant $\varepsilon_1>0$ and a bijection,
$$
\widetilde\beta \, : \, \sigma (\widetilde P)\cap I(h)\,  \to \, {\rm Res} (P)\cap \Gamma (h),
$$
(where we have set,
$\Gamma (h):= (I(h)+[-a(h), a(h)])+i[-\varepsilon_1, 0]$),
such that, for any $\varepsilon >0$, one has,
\be
\label{bij}
\widetilde\beta (\lambda) -\lambda ={\mathcal O}(e^{-(2S_0-\varepsilon)/h}),
\ee
uniformly as $h\to 0_+$.

In particular, since the eigenvalues of $\widetilde P$ are real, one obtains that, for any $\varepsilon >0$, the resonances $\rho$ in $\Gamma (h)$ satisfy,
\be
\label{upperbd}
|\Im \rho |={\mathcal O}(e^{-(2S_0-\varepsilon)/h}).
\ee
Observe that, since $-h^2\Delta +\widetilde V_2$ is elliptic, the eigenvalues of $\widetilde P$ near 0 are actually close (up to $\O(h^2)$) to those of $-h^2\Delta +V_1$. In particular, using Weyl estimates, we see that the number of eigenvalues of $\widetilde P$ inside any small enough fix interval around 0 is $\O(h^{-n})$. Thus, possibly by excluding some particular values of $h$ (e.g. by taking $h$ along a convenient sequence tending to 0), it is in principle possible to construct many such intervals $I(h)$ satisfying \eqref{anonexp}-\eqref{gap1} (see also \cite[Section 2]{HeSj1} and \cite[Section 9]{HeSj2}).

From now on, we consider the particular case where $I(h)$ consists of a unique value, that is, we assume,

{\bf Assumption 3.} {\sl
There exists  $E(h)\in \R$ such that,
\be
\label{gap2}
\begin{aligned}
& E(h)\in \sigma_{disc} (\widetilde P);\\
& E(h)\to 0 \mbox{ as } h\to 0_+;\\
& \sigma(\widetilde P)\cap [E(h)-2a(h), E(h)+2a(h)] =\{ E(h)\}, \\
& \mbox{ where } a(h) \mbox{ satisfies (\ref{anonexp})}.
\end{aligned}
\ee}

Applying \eqref{bij}, we denote by $\rho =\rho (h)$ the unique resonance of $P$ satisfying $\rho - E(h) = \O(e^{-(2S_0-\varepsilon)/h})$ for all $\varepsilon >0$. We also denote by $u_0$ the normalized eigenstate of $\widetilde P$ associated with $E(h)$.

\begin{remark}\sl
By standard results on the tunneling effect (see, e.g., \cite{HeSj1}), it can be shown that the eigenvalues of $\widetilde P$ coincide, up to exponentially small errors, with those of the Dirichlet realization $P_{\mathcal D}$ of $P$ on any domain ${\mathcal D}$ contained in $\widehat I$ and whose interior contains $U$. As a consequence, in Assumption 3 one can equivalently replace $\widetilde P$ with any of such $P_{\mathcal D}$'s.
\end{remark}

As in \cite{Ma1, DaMa}, in order to obtain a lower bound on the width $|\Im\rho|$ of $\rho$, we need to add a further assumption on the size of $u_0$ near some geometric subset of $\partial U$.

{\bf Assumption 4.} \label{Ass6}{\sl
For any $\varepsilon >0$ and for any neighborhood ${\mathcal W}$ of the set $\bigcup_{\gamma \in G}\left( \gamma \cap \partial U \right)$, there exists $C=C(\varepsilon, {\mathcal W})>0$ such that, for all $h>0$ small enough, one has,
$$
\Vert u_0\Vert_{L^2({\mathcal W})\oplus L^2({\mathcal W})} \geq \frac1{C}e^{-\varepsilon /h}.
$$
}

\begin{remark}\sl
\label{remhypMS}
Actually, introducing the microsupport $MS(u_0)$ of $u_0$ as, e.g., in \cite{Ma2} (with obvious changes due to the fact that $u_0$ has two components), in our situation we have $MS(u_0)\cap \{ x\in \partial U\} \subset \{ \xi =0\}$, and  Assumption 4 can be rephrased into,
$$
MS(u_0)\cap \bigcup_{\gamma \in G}\left( \gamma \cap \partial U \right)\times \{ 0\} \not= \emptyset.
$$
\end{remark}
\begin{remark}\sl
By standard results on the propagation of the microsupport (see, e.g., \cite{Ma2}), one can see that Assumption 4 is always satisfied when $n=1$. When $n\geq 2$, a sufficient condition is that for any neighborhood $W$ of $\bigcup_{\gamma \in G}\left( \gamma \cap \partial U \right)\times \{ 0\}$, the set $\bigcup_{t\in\R} \exp tH_{p_1}(W)$ is a neighborhood of $p_1^{-1}(0)\cap \{ x\in U\}$, where $p_1(x,\xi):= \xi^2 + V_1(x)$ and $H_{p_1}:=(\partial_\xi p_1, -\partial_xp_1)$ is the Hamilton field of $p_1$.
\end{remark}

Now, we denote by $G_1\subset G$ be the set of minimal geodesics $\gamma$ such that $MS(u)\cap \left( \gamma \cap \partial U \right)\times \{ 0\} \not= \emptyset$. (By Assumption 4, one has $G_1\not= \emptyset$.) 

 We also denote by $w_0(x,\xi)$ the principal symbol of $W$ and by $w^*_0(x,\xi)$ the principal symbol of $W^*$.

We further assume,

{\bf Assumption 5.} \label{Ass3}{\sl There exists $\gamma\in G_1$ such that,
\begin{itemize}
\item $\gamma$ intersects $\partial \Omega_0$  at a finite number of points $x^{(1)}, \dots, x^{(N)}$  (ordered from the closest to $\partial{\mathcal M}$ up to the closest to $U$);
\item  All intersections between $\gamma$ and $\partial \Omega_0$ are transversal;
\item  $\nabla V_1 \not= \nabla V_2$ on $\gamma\cap \partial \Omega_0$;
\end{itemize}}

{\bf Assumption 6.} {\sl 
 For all $j\in \{ 1, \dots, N\}$, one has $w_0^*(x^{(j)}, i\nabla\varphi (x^{(j)})) \not= 0$ if $j$ is  odd, and $w_0(x^{(j)}, i\nabla\varphi (x^{(j)})) \not= 0$ if $j$ is even.}

Observe that $N$ is necessarily odd. Moreover, the last property is nothing but a condition of ellipticity on the  operators of interaction, similar to that appearing in \cite{GrMa, As}.

Our main result is,

\begin{theorem}\sl
\label{mainth}
Under Assumptions 1 to 6, one has,
$$
\lim_{h\to 0_+} h \ln |\Im \rho (h)| = -2S_0.
$$
\end{theorem}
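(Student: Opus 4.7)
The upper bound $\limsup_{h\to 0_+} h\ln |\Im\rho(h)| \le -2S_0$ is already delivered by \eqref{upperbd}, since $\rho$ lies in $\Gamma(h)$ and the eigenvalues of $\widetilde P$ are real. So the entire content of the theorem is the matching lower bound $\liminf_{h\to 0_+} h\ln |\Im\rho(h)| \ge -2S_0$, which I will prove by a contradiction/propagation argument along the lines sketched in the introduction.

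The plan starts from the standard Green-type formula for the resonance width. Denoting by $u=u_\rho$ the distorted resonant state associated with $\rho$, normalized appropriately, one obtains an identity of the schematic form $\Im\rho = \langle [P_\theta,\chi]u,u\rangle$ with $\chi$ a cut-off supported in the distortion region (i.e.\ deep in the sea). Choosing $\chi$ to localize across a hypersurface $\Sigma$ separating the island from the distortion region, and using that $u$ coincides (modulo exponentially small errors) with the eigenstate $u_0$ of $\widetilde P$ inside the island, one is led to an estimate of the type
\begin{equation*}
|\Im\rho| \ge \frac{1}{C}\, \|u\|^2_{L^2(\Sigma')\oplus L^2(\Sigma')} - \mathcal{O}(e^{-(2S_0+\eta)/h}),
\end{equation*}
for some $\eta>0$ and some slice $\Sigma'$ located in the sea just beyond the shore. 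This is carried out in Section 4.

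Now suppose, for contradiction, that there exist $\varepsilon_0 > 0$ and a sequence $h_k\to 0_+$ such that $|\Im\rho(h_k)|\le e^{-(2S_0+\varepsilon_0)/h_k}$. The identity above then forces $u$ to be $e^{-(S_0+\varepsilon_0/3)/h}$-small on $\Sigma'$ (after conjugation by $e^{\varphi/h}$, i.e.\ after the Agmon weight is pulled out). The core of the paper is then to propagate this smallness back, step by step, up to a neighborhood $\mathcal W$ of $\bigcup_{\gamma\in G_1}(\gamma\cap\partial U)$, contradicting Assumption~4 (equivalently, the microsupport statement of Remark on $MS(u_0)$). Along a geodesic $\gamma\in G_1$ given by Assumption~5, the propagation is organized into three geometrically distinct steps:
\begin{itemize}
\item \emph{Across the shore} $\partial\mathcal M$ (Section 6), where the effective problem reduces to that of \cite{DaMa,Ma1} for a scalar operator, with the extra bookkeeping needed because the first component $P_1$ is elliptic here and decouples from the second one microlocally;
\item \emph{Across the crest} $\partial\Omega_0$ (Section 7), where $V_1 = V_2$ and the two diagonal symbols cross transversally — here the lower-order interaction $hW$ drives the propagation;
\item \emph{Across the boundary of the well} $\partial U$ (Section 8), using Carleman estimates adapted from \cite{HeSj2,Ma1} to the matrix case.
\end{itemize}
Each of these three propagations transfers an exponentially small bound of the form $\|u\|\le e^{-(d(\cdot, \mathcal M)-\varepsilon)/h}$ from one side of the relevant hypersurface to the other. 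Concatenating them along $\gamma$ yields an $e^{-\varepsilon_0/(2h)}$ factor of smallness of $u_0$ in $\mathcal W$, which contradicts Assumption~4.

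The hard step is clearly the second one, the crossing of the crest, because there the principal symbol alone does not propagate anything: the transverse crossing is a non-involutive double characteristic problem in the style of \cite{LL,PeSo}, and the relevant interaction is the subprincipal term $hW$. To handle it I would conjugate the system by the Agmon weight $e^{\varphi/h}$ and pass to an FBI transform à la Sjöstrand, associated with a complex canonical transformation chosen so that \emph{both} diagonal operators $P_1$ and $P_2$ simultaneously take a simple form (e.g.\ with the two characteristic varieties straightened to coordinate planes intersecting transversally). In these reduced coordinates the propagation of the microsupport across the crest follows from a direct a priori estimate on the resulting $2\times 2$ system. Once this step is in place, Sections 6 and 8 amount to adapting well-known matrix-valued Carleman estimates and are technically heavier but conceptually standard.

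Finally, writing the contradiction explicitly: the propagation delivers $\|u_0\|_{L^2(\mathcal W)\oplus L^2(\mathcal W)} \le C e^{-\varepsilon_0/(3h_k)}$ for some $\varepsilon_0>0$ along the subsequence $h_k$, which contradicts Assumption~4 applied with $\varepsilon = \varepsilon_0/6$. Hence $\liminf h\ln|\Im\rho(h)|\ge -2S_0$, completing the proof.
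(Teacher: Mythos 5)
Your global architecture matches the paper's: the upper bound is indeed \eqref{upperbd}, the lower bound is proved by contradiction starting from the Green/Stokes identity of Section 4 (your schematic bound is Proposition \ref{estrhou} plus the elliptic estimates of Section 5, which you gloss over: the boundary identity only controls $\|u_2\|^2$ directly, and one must use the ellipticity of $P_1$ in the sea to also kill $u_1$ there), and the smallness is then propagated across the shore by Carleman estimates and across the crest by an FBI transform adapted to a non-involutive double characteristic. Your identification of the crest crossing as the step driven by the subprincipal interaction $hW$, and your proposed tool (a complex canonical transformation putting both diagonal symbols in normal form, followed by an a priori estimate in the FBI side), is exactly what Section 7 does.

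There is, however, a genuine gap in your third step. The paper does \emph{not} cross $\partial U$ by Carleman estimates, and such an approach is obstructed: near $\partial U$ the Agmon weight satisfies $|\nabla\varphi|^2=V_1\to 0$, so $\partial U$ is a caustic/turning-point set for $P_1$ at the energy $\Re\rho$, and no admissible weight $\psi$ with $|\nabla\psi|^2<V_1$ can carry an exponential gain across it (this is precisely the known difficulty for highly excited states, as opposed to the ground-state regime). What Section 8 actually does, following \cite{Ma2}, is to construct two families of asymptotic solutions $v_\pm=a_\pm e^{-F_\pm/h}$ whose phases $F_\pm$ are the two analytic continuations of $d(y,z)-d(z,U)$ around the caustic ${\mathcal C}_z$ (Airy-type behaviour at the caustic), to test $u$ against both via Green's formula on a tube around $\gamma^{(N+1)}$, and to exploit the ellipticity of $g_1^+-g_1^-$ (i.e.\ of $\partial_\nu(F_+-F_-)$) to conclude that $(y_{t_0};0,\tau_0)\notin MS(u_1)$ for \emph{all} real $\tau_0$; one then propagates along the $H_{p_1}$-bicharacteristic through $(y_{t_0};0,\sqrt{E-V_1(y_{t_0})})$ to reach $(y_0,0)$. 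This two-branch duality argument is the missing idea in your sketch. A second, smaller issue: your final contradiction, stated as smallness of $\|u_0\|_{L^2({\mathcal W})}$ over a neighbourhood ${\mathcal W}$ of the whole set $\bigcup_{\gamma\in G}(\gamma\cap\partial U)$, does not follow from propagation along the single geodesic of Assumption 5; the paper instead contradicts the microsupport reformulation of Assumption 4 through the definition of $G_1$ (the chosen $\gamma$ satisfies $MS(u)\cap(\gamma\cap\partial U)\times\{0\}\neq\emptyset$ by hypothesis, while the propagation yields the empty intersection). You allude to this reformulation in passing, but the contradiction you actually write down is not the one that closes the argument.
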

\begin{remark}\sl
In view of \eqref{upperbd}, this will be implied by the fact that, for any $\varepsilon >0$, there exists $C=C(\varepsilon)>0$ such that,
$$
|\Im \rho (h)| \geq \frac1{C} e^{-(2S_0 +\varepsilon)/h},
$$
for all $h>0$ small enough.
\end{remark}
Concerning the necessity of Assumption 4, we also have the following converse result:
\begin{theorem}\sl
\label{convmainth}
Suppose that Assumptions 1 to 3 are satisfied, but Assumption 4 is not. Assume further that for all $\gamma\in G$, $\gamma$ satisfies the properties listed in Assumption 5. Then, there exists $\delta >0$ such that,
$$
 |\Im \rho (h)| \leq e^{ -2(S_0+\delta)/h}.
$$
\end{theorem}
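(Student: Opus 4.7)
The plan is to reverse the implication of Theorem~\ref{mainth}: instead of transporting smallness of $|\Im\rho|$ into smallness of $u_0$ near $\partial U$, I would transport the hypothesized smallness of $u_0$ near $\bigcup_{\gamma\in G}(\gamma\cap\partial U)$ out to the sea and feed it into the link of Section~4.

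First, I would invoke the formula from Section~4 relating $\Im\rho$ to a sesquilinear expression in the resonant state $u=(u^{(1)},u^{(2)})$ supported on a thin slab in the sea, or equivalently a boundary flux across a hypersurface placed just inside $\partial\widehat{\mathcal{I}}$. Since standard tunneling comparison forces $u-u_0 = O(e^{-(2S_0-\varepsilon)/h})$ on every compact subset of $\widehat{\mathcal{I}}$, up to negligible errors one may substitute $u_0$ for $u$ in this identity on the island side. Bounding $|\Im\rho(h)|\leq e^{-2(S_0+\delta)/h}$ is thereby reduced to proving
\[
\|u_0\|_{L^2(V)\oplus L^2(V)} \leq e^{-(S_0+\delta)/h}
\]
on a thin neighborhood $V$ of a suitable hypersurface just inside $\partial\widehat{\mathcal{I}}$.

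Second, I would translate the failure of Assumption~4: there exist $\varepsilon_0>0$, a neighborhood $\mathcal{W}_0$ of $\bigcup_{\gamma\in G}(\gamma\cap\partial U)$ and a sequence $h_k\to 0_+$ along which
\[
\|u_0\|_{L^2(\mathcal{W}_0)\oplus L^2(\mathcal{W}_0)} \leq e^{-\varepsilon_0/h_k}.
\]
By the very definition of $G$, every minimal Agmon geodesic from $U$ to $\mathcal{M}$ exits $U$ inside $\mathcal{W}_0$, so $u_0$ is exponentially more suppressed than the generic Agmon bound $O(e^{-(\varphi-\varepsilon)/h})$ at every relevant entry point into $\widehat{\mathcal{I}}\setminus U$ (equivalently, the microsupport of $u_0$ is disjoint from a neighborhood of $\bigcup_{\gamma\in G}(\gamma\cap\partial U)\times\{0\}$ along $h_k$). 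I would then re-run the Carleman/microlocal propagation of Sections~6--8 with a sharpened weight $\psi=\varphi+\chi\,\varepsilon_0/2$, where $\chi$ is a cut-off equal to $1$ in a tube around $\bigcup_{\gamma\in G}\gamma$ and vanishing in a small neighborhood of $\mathcal{W}_0$. Since Assumption~5 is posited here for \emph{every} $\gamma\in G$, the crest-crossing analysis of Section~7 can be applied along each such geodesic, and the shore-crossing estimate of Section~8 applies uniformly. One obtains $u_0 = O(e^{-(\varphi+\varepsilon_0/3)/h_k})$ locally on $\partial\widehat{\mathcal{I}}\cap\bigcup_{\gamma\in G}\gamma$, hence on $V$; substituting into Step~1 yields the desired bound with $\delta=\varepsilon_0/3$, along the subsequence $h_k$.

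The main obstacle is to verify that the crest-crossing argument of Section~7 is genuinely \emph{quantitative with respect to the weight}: an initial gain of size $\varepsilon_0$ on the well side must survive to a gain of $\varepsilon_0-o(1)$ on the sea side. Since after conjugation by the weight both coupled operators acquire complex principal symbols and are simultaneously simplified by the FBI/complex canonical transformation, one has to track precisely the effect of the modified weight $\psi$ on this simplification. This is the only delicate step; the shore-crossing and the interior-ellipticity transitions preserve the improvement in a routine way.
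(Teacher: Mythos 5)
Your global architecture (transport the smallness from $\partial U$ outward to the sea and feed it into the flux identity of Section \ref{Preliminaries}) is the same as the paper's, but the way you propose to cross the crest contains a genuine gap. You want to ``re-run'' the Section 7 analysis along each $\gamma\in G$. That analysis, however, rests on Assumption 6 (ellipticity of $w_0$ or $w_0^*$ at the crossing points, which is what makes $\widehat A_2$ elliptic and allows the system \eqref{systcarct2transfbis} to be closed on $\widehat w_2$), and Assumption 6 is \emph{not} among the hypotheses of Theorem \ref{convmainth}. Your route would therefore prove a strictly weaker statement. Moreover, the point you yourself flag as ``the only delicate step'' --- that the FBI/double-characteristic argument is quantitative in the weight --- is precisely what you would have to supply, and you do not. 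The paper avoids all of this: because here the propagation runs in the \emph{outgoing} direction (from $U$ towards $\mathcal M$), the crest is crossed by an elementary one-sided Agmon estimate with the weight $\psi(x)=\varepsilon_1(1-\varepsilon_2^{-2}|x'|^2)\varphi_0+(1-\varepsilon_3)\varphi(x)$ on a small box around the crossing point: $\psi\le\varphi$ on the lateral and outgoing faces (so the a priori bound \eqref{agmon} controls the boundary terms there), $\psi\le\varphi+\delta_1$ on the incoming face (controlled by the already-established gain), $|\nabla\psi|^2\le(1-\varepsilon_3)\min(V_1,V_2)$ inside, and $\psi(0)>\varphi_0$. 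No interaction condition and no FBI transform are needed; the hard machinery of Section 7 is required only for the backward propagation in Theorem \ref{mainth}. A similar explicit weight handles the shore, followed by Stokes' formula on $\widehat{\mathcal I}\cap\{V_2<-\mu\}$.

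Two further inaccuracies. First, your Step 1 substitutes $u_0$ for $u$ near $\partial\widehat{\mathcal I}$ on the strength of ``$u-u_0=\O(e^{-(2S_0-\varepsilon)/h})$ on every compact subset of $\widehat{\mathcal I}$''; the paper only establishes $u-u_0=\O(e^{-\delta/h})$ in a neighborhood of $\overline U$, and near the sea $u_0$ solves a different equation (the one with $\widetilde V_2$), so this comparison is not available there. The correct direction is the opposite one: use the closeness of $u$ and $u_0$ near $U$ to transfer the failure of Assumption 4 to the resonant state $u$, and then propagate $u$ itself. Second, your cut-off weight $\varphi+\chi\,\varepsilon_0/2$ with $\chi$ vanishing near $\mathcal W_0$ puts the gain in the wrong place and violates the eikonal inequality $|\nabla\psi|^2\le\min(V_1,V_2)$ where $\nabla\chi\neq0$; the transversal decay factor $(1-\varepsilon_2^{-2}|x'|^2)$ and the coefficient $(1-\varepsilon_3)$ in the paper's weights are exactly what is needed to make the Green's-formula argument close.
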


\section{Preliminaries}\label{Preliminaries}
 In this section, we recall some basic facts on the resonant state $u=(u_1,u_2)$ of $P$ associated with $\rho$, and normalized in such a way that,
 \be
 \label{normal}
 \Vert u\Vert_{L^2(\widehat{\mathcal I})} =1.
 \ee
(From now on, we always write $L^2$ instead of $L^2\oplus L^2$ in order to lighten the notations, and similarly for the Sobolev spaces).

Typically, the properties we are going to recall can be deduced from the same arguments used in the scalar case (see, e.g.,  \cite{HeSj1, HeSj2, HiSi}), and we refer the reader interested in more details to, e.g., \cite{Kl, GrMa}.

First of all, by Lithner-Agmon estimates (as, e.g., in \cite{Kl}), we have that, for any $\varepsilon >0$ and any bounded set ${\mathcal B}\subset \R^n$,
\be
\label{agmon}
\Vert e^{\varphi /h }u\Vert_{H^1({\mathcal B})} =\O(e^{\varepsilon /h}),
\ee
where $\varphi$ is defined in \eqref{agmondist}. In addition, with the same techniques used in \cite{HeSj2}, it can be seen that near the well $U$, $u$ is exponentially close to $u_0$, in the sense that there exists $\delta >0$ and a neighborhood $\Omega_U$ of $\overline U$ such that,
$$
\Vert u-u_0\Vert_{H^1(\Omega_U)} = \O(e^{-\delta /h}).
$$
in particular, the property of $u_0$ in Assumption 4 extends to $u$, too.

As in \cite{HeSj2, DaMa}, we consider the set (called the set of  ``points of type 1'' in \cite{HeSj2}),
$$
{\mathcal T}_1 := \bigcup_{\gamma \in G}(\gamma \cap \partial {\mathcal M}).
$$
We also set $p_2(x,\xi)=\xi^2 +V_2(x)$ and, by the same arguments as in \cite[Section 9]{HeSj2} (but adapted to our case of a system), we see that if a bounded subset $\mathcal B$ of $\mathcal M$ stays away from the $x$-projection of the set,
$$
\widetilde{\mathcal T}_1:= \bigcup_{t\in\R} \exp tH_{p_2}({\mathcal T}_1\times\{0\}),
$$
then there exists $\delta >0$ such that,
\be
\label{type2}
\Vert u \Vert_{H^1({\mathcal B})} =\O(e^{-(S_0+\delta)/h}).
\ee

Now, let $\Omega_1$ be a smooth bounded open domain containing the closure of the island $\widehat{\mathcal I}$, and write the interaction $W$ as,
\be
\label{interW}
W= r_0(x) + hr_1(x)\cdot \nabla_x,
\ee
where $r_0$ is complex-valued, and $r_1$ is (complex) vector-valued.
 By the Stokes formula on $\Omega_1$, we have,
\be
\label{formHS}
(\Im \rho)\Vert u\Vert^2_{L^2(\Omega_1)} = -h^2\Im  \int_{\partial\Omega_1} \frac{\partial u}{\partial\nu}\cdot \overline{u}ds + h^2\Im\int_{\partial\Omega_1}(r_1\cdot \nu)u_2\overline{u_1}ds,
\ee
where $ds$ is the surface measure on $\partial\Omega_1$, and $\nu$ stands for the outward ponting unit normal to $\Omega_1$.\\
(Note that, if $W$ had been a more general pseudodifferential operator, the previous formula would not have been valid anymore, but could have been replaced by another one involving the multiplication by a cut-off function instead of the restriction to $\Omega_1$.)

Using \eqref{agmon}-\eqref{type2} and \eqref{normal}, we easily deduce the existence of some $\delta >0$ such that,
\be
\label{formimrho}
\begin{aligned}
\Im \rho = -h^2\Im  \int_{\partial\Omega_1\cap {\mathcal B}} \frac{\partial u}{\partial\nu}\cdot \overline{u}ds + h^2\Im\int_{\partial\Omega_1\cap {\mathcal B}}&(r_1\cdot \nu)u_2\overline{u_1}ds\\
&+\O(e^{-(2S_0+\delta)/h}),
\end{aligned}
\ee
where ${\mathcal B}$ is an arbitrarily small neighborhood of the $x$-projection $\Pi_x\widetilde{\mathcal T}_1$ of $\widetilde{\mathcal T}_1$.

Now, we fix some arbitrary $z_1\in\Pi_x{\mathcal T}_1$, and we denote by $Z_1$ a small enough neighborhood of $z_1$ in $\partial M$. For $t_0>0$ sufficiently small, we set,
$$
\Lambda_\pm := \bigcup_{0<\pm t<2t_0} \exp tH_{p_2}(Z_1\times\{0\}).
$$
Then $\Lambda_\pm \subset \{ p_2=0\}$ and, since $Z_1\times\{0\}$ is isotropic, $\Lambda_\pm$ is Lagrangian (see, e.g., \cite{Ma2}). Since $\nabla V_2(z_1)\not= 0$ and $H_{p_2} =(2\xi, -\nabla V_2)$, it is also easy to check that both $\Lambda_+$ and $\Lambda_-$ project bijectively on the base. Since in addition $p_2$ is an even function of $\xi$, we finally obtain (e.g., as in \cite[Section 5]{DaMa}) the existence of a real-analytic function $\psi$, defined on the $x$-projection of $\Lambda_\pm$, such that,
\be
\begin{aligned}
& \Lambda_\pm \, : \, \xi =\pm \nabla\psi (x);\\
& (\nabla\psi (x))^2 + V_2(x) =0.
\end{aligned}
\ee
Setting $z_0:=\Pi_x (\exp t_0H_{p_2}(z_1, 0))$, and still denoting by $\psi$ an holomorphic extension of $\psi$ to a complex neighborhood of $z_0$, one can prove as in \cite[Proposition 5.1]{DaMa},

\begin{proposition}\sl
\label{Hphi}
For any $\varepsilon_1>0$, one has,
$$
e^{-i\psi /h+S_0/h}u\in H_{ \varepsilon_1|\Im x|, z_0},
$$
where $H_{\varepsilon_1|\Im x|, z_0}$ is the Sj\"ostrand's space consisting of $h$-dependent holomorphic functions $v=v(x;h)$ defined in a complex neighborhood of $z_0$, such that, for all $\varepsilon >0$,
$$
v(x,h)={\mathcal O}(e^{(\varepsilon_1|\Im x|+\varepsilon)/h}),
$$
uniformly for $x\in \C^n$ close enough to $z_0$ and $h>0$ small enough.
\end{proposition}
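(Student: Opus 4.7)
The plan is to propagate the Lithner--Agmon decay of $u$ from the shore point $z_1$ along the bicharacteristic flow of $p_2$ up to $z_0$, and to upgrade this real-variable decay into a holomorphic statement via Sj\"ostrand's analytic microlocal machinery.

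First, I would observe that inside the sea $\mathcal M$ the pseudo-metric $(\min(V_1,V_2))_+\,dx^2$ vanishes identically (since $V_2\le 0$ there), so the Lithner--Agmon distance $\varphi$ to $U$ stays constant on any connected component of $\mathcal M$, with common value $S_0=d(U,{\mathcal M})$. The estimate \eqref{agmon} then yields
\be
\|e^{S_0/h}u\|_{H^1(V)}=\O(e^{\varepsilon/h})
\ee
on a suitable real neighborhood $V$ of $z_0$, and hence $|v(x;h)|=\O(e^{\varepsilon/h})$ for real $x$ close to $z_0$, where $v:=e^{-i\psi/h+S_0/h}u$ (using that $\psi$ is real on $\Pi_x\Lambda_+$, so $|e^{-i\psi/h}|=1$). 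This establishes the membership in $H_{\varepsilon_1|\Im x|,z_0}$ on the real axis, before complexification.

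Next I would examine the conjugated equation. Since $\psi$ satisfies $(\nabla\psi)^2+V_2=0$ on $\Pi_x\Lambda_\pm$, the component $v_2=e^{-i\psi/h+S_0/h}u_2$ solves
\be
e^{-i\psi/h}(P_2-\rho)e^{i\psi/h}\,v_2=-h\,e^{-i\psi/h+S_0/h}\,Wu_1.
\ee
The operator on the left has principal symbol $(\xi+\nabla\psi)^2+V_2-\rho$, which vanishes to first order on $\{\xi=0\}$ and is of real principal type in the transverse direction, generating a transport along the projected Hamiltonian flow of $p_2$. The first component is treated analogously, with an additional decay coming from the ellipticity of $P_1-\rho$ on $\mathcal M$ (where $V_1$ stays away from $0$): one gets $u_1=\O(e^{-(S_0+\delta)/h})$ near $z_0$ for some $\delta>0$, so the right-hand side above is $\O(e^{-\delta/h})$ in any relevant space.

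The decisive step is then to prove that $u$ is microlocally concentrated on the outgoing branch $\Lambda_+$ (and not on $\Lambda_-$) near $z_0$. This outgoing property reflects that $\rho$ is a resonance defined by complex distortion $U_{i\theta}$: under the deformation, only the $+$-branch of the flow reaches the distorded region in which the eigenfunction is genuinely $L^2$. Once this is established, one transfers the problem through an FBI transform in the sense of \cite{Sj}; microlocal concentration on $\Lambda_+=\{\xi=\nabla\psi(x)\}$ translates, via the standard dictionary between FBI microconcentration and analytic wavefront sets, into holomorphy of $v_2$ in a complex neighborhood of $z_0$ together with the growth $|v_2(x;h)|=\O(e^{(\varepsilon_1|\Im x|+\varepsilon)/h})$, which is precisely the statement that $v_2\in H_{\varepsilon_1|\Im x|,z_0}$.

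The main obstacle is the outgoing microlocal property on $\Lambda_+$: establishing it for the matrix problem requires tracking the complex distortion through the FBI transform, and using the non-trapping assumption on $V_2$ (together with Agmon decay inside the island $\widehat{\mathcal I}$) to exclude any contribution from $\Lambda_-$, whose bicharacteristics would have to originate from inside $\widehat{\mathcal I}$. Once this step is settled, the remaining analysis is essentially a bookkeeping exercise in Sj\"ostrand's analytic symbol calculus, following closely \cite[Proposition 5.1]{DaMa}: the subprincipal interaction $hW$ enters only as an exponentially small source term and does not disturb the leading-order propagation along $\Lambda_+$.
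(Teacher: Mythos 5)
Your proposal is correct and follows essentially the same route as the paper: real bounds on $e^{S_0/h}u$ from the Lithner--Agmon estimates, exclusion of the incoming branch $\Lambda_-$ via the outgoing property of the resonant state (so that $MS(e^{S_0/h}u)\subset\Lambda_+$ and hence $MS(v)\subset\{\xi=0\}$ near $z_0$), and then Sj\"ostrand's FBI machinery to convert this microsupport information into the holomorphic extension with growth $e^{(\varepsilon_1|\Im x|+\varepsilon)/h}$ --- which is exactly how the paper proceeds, writing $v$ as an oscillating integral, removing the region $|\xi|\geq C/h$ by ellipticity, bounding the region $|\xi|\leq\varepsilon_1/h$ directly, and killing the intermediate region by the microsupport statement. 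The interaction term is indeed only a lower-order perturbation here, as you note, and both you and the paper defer the remaining bookkeeping to the scalar case treated in \cite[Proposition 5.1]{DaMa}.
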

\begin{remark}\sl Obviously, the result of this proposition can be re-written as,
$$
u\in H_{-\Im \psi -S_0 + \varepsilon_1|\Im x|, z_0}.
$$
\end{remark}
\begin{proof} The proof is very similar to that of \cite[Proposition 5.1]{DaMa}, with the only difference that here we have to deal with a matrix-operator, instead of a scalar one. For the sake of completeness, we outline the main steps. At first, for $x$ close enough to $z_0$, we write $v(x):= e^{-i\psi (x)/h+S_0/h}u(x)$ as the oscillating integral,
\be
\label{repv}
v(x)= (2\pi)^{-n}\int e^{i|\xi|\theta (x,y,\frac{\xi}{|\xi|})}a(x-y,\frac{\xi}{|\xi|})v(y)\chi(y)dyd\xi,
\ee
where $\chi$ is a cut-off function around $z_0$, $\theta (x,y,\tau):=(x-y)\tau+\frac{1}{2}i(x-y)^2$,  and $a(x,\tau):=1+\frac{1}{2}ix\tau$.
Thanks to the standard ellipticity of $P$, for $|\xi| \geq Ch^{-1}$ (with $C>0$ a large enough constant), one can construct a $\C^2$-valued analytic symbol $b=b(x,y,\tau, \xi, h)\sim \sum_{k\geq 0} b_k(x,y,\tau, h)|\xi|^{-k}$ such that,
$$
e^{-i|\xi|\theta (x,y,\tau)} Q(y,hD_y)\left(e^{i|\xi|\theta (x,y,\tau)}b\right) =a(x-y,\tau) +\O(e^{-\delta |\xi|}),
$$
with $\delta >0$, and where $Q(y,hD_y):={}^t\left(e^{-i\psi (y)/h}P(y,hD_y)e^{i\psi (y)/h}-\rho\right)$. Inserting this estimate into \eqref{repv} and using that, for all $\varepsilon >0$, one has $v=\O(e^{\varepsilon /h})$ uniformly on the real near $z_0$, together with the fact that $Qv=0$, we obtain,
\be
\label{reducrepv}
v(x) =(2\pi)^{-n}\int_{|\xi|\leq C/h} e^{i|\xi|\theta (x,y,\frac{\xi}{|\xi|})}a(x-y,\frac{\xi}{|\xi|})v(y)\chi(y)dyd\xi + \O(e^{-\delta'/h}),
\ee
for some $\delta'>0$. 

Then, splitting the remaining integral into $\int_{\{|\xi|\leq\frac{\varepsilon_1}{h}\}}$ and $\int_{\{\frac{\varepsilon_1}{h}\leq|\xi|\leq\frac{C}{h}\}}$, we immediately observe that  the first  term $\int_{\{|\xi|\leq\frac{\varepsilon_1}{h}\}}$ is $\O(e^{(\varepsilon_1|\Im x| +\varepsilon)/h})$ for all $\varepsilon >0$. Finally, using that $u$ is outgoing and the results of \cite[Section 9]{HeSj2}, we see that $MS(e^{S_0/h}u)\cap \Lambda_-=\emptyset$. Therefore, by propagation of $MS$, and since also $MS(e^{S_0/h}u)\cap \left(\partial M\times \R^n\right) \subset \{\xi =0\}$, we deduce that, above a neighborhood of $z_0$, one necessarily has $MS(e^{S_0/h}u)\subset \Lambda_+$, and thus $MS(v)\subset \{ \xi =0\}$. But, after the change of variables $\xi \mapsto \xi /h$, this exactly means that the term $\int_{\{\frac{\varepsilon_1}{h}\leq|\xi|\leq\frac{C}{h}\}}$ is exponentially small as $h\to 0_+$.
\end{proof}

Thanks to Proposition \ref{Hphi}, we can enter the framework of analytic pseudodifferential calculus of \cite[Sections 4-5]{Sj}. In particular, working in the Sj\"ostrand's space 
$H_{-\Im \psi -S_0 + \varepsilon_1|\Im x|, z_0}$, we can represent 
$Pu$ as,
\be
\label{pseudo}
Pu(x)=\frac1{(2\pi h)^n}\int_{\Gamma(x)}e^{i(x-y)\xi /h -[(x-\alpha_x)^2+(y-\alpha_x)^2]/2h}p (\alpha_x,\xi;h) v(y)dyd\xi d\alpha_x,
\ee
where $p$ is a matrix-valued analytic symbol that satisfies,
$$
p (\alpha_x,\xi;h)=\left(\begin{array}{cc}
\xi^2 + V_1(\alpha_x) & 0\\
0 & \xi^2 + V_2(\alpha_x)
\end{array}\right) +\O(h),
$$
and $\Gamma (x)$ is the (bounded, singular) contour of $\C^{3n}$ defined by,
$$
\Gamma (x)\, :\, \left\{
\begin{aligned}
& \xi = \nabla\psi (\alpha_x) +2i\varepsilon_1\frac{\overline{x-y}}{|x-y|}\, ;\\
& |x-y| \leq r,\,\, y\in \C^n\,\, (r \mbox{ small enough with  respect to } \varepsilon_1)\, ;\\
& |x-\alpha_x| \leq r,\, \alpha_x\in\R^n.
\end{aligned}
\right.
$$
Now, as in \cite{DaMa}, we take local coordinates $(x',x_n)\in \R^{n-1}\times \R$ centered at $z_1$, in such a way that $dV_2(z_1)\cdot x =-cx_n$ with $c>0$. Then, taking advantage of the fact that $hD_{x_n}$ and  $h^2D_{x_n}^2$  can be represented as in \eqref{pseudo} with $p$ substituted with $\xi_n$ and $\xi_n^2$, respectively, we see that $p (\alpha_x, \xi;h)$ can actually be written as,
$$
p (\alpha_x,\xi;h)=\left(\begin{array}{cc}
\xi_n^2 + a_1 & hb_1+hb_2\xi_n\\
hb_3+hb_4 \xi_n & \xi_n^2 + a_2
\end{array}\right) ,
$$
where the $a_j$'s and the $b_j$'s are functions of $(\alpha_x, \xi' ;h)$ only (not of $\xi_n$), and satisfy,
$$
\begin{aligned}
& a_j(\alpha_x, \xi' ;h)=(\xi')^2+V_j(\alpha_x)+\O(h)\quad &(j=1,2);\\
& b_k(\alpha_x, \xi' ;h)=\O(1)\quad \mbox{ on } \Gamma(x) &(k=1,2,3,4).
\end{aligned}
$$
Now, on $\Gamma(x)$, we have that $\xi$ remains close to $\nabla\psi (\alpha_x)$, which in turn remains close to $(0, \sqrt{c\alpha_{x_n}})$, with $\alpha_{x_n}$ close to $\delta_0:=z_{0,n}>0$. In particular, $a_1$ remains close to $V_1(z_0)>0$, and $a_2$ remains close to $V_2(z_0)<0$. 

Then, re-writing $p (\alpha_x,\xi;h)$ as,
$$
p (\alpha_x,\xi;h)=\left( \xi_n {\mathbf I}_2 + \frac{h}2{\mathbf J}\right)^2 + A
$$
with,
$$
\begin{aligned}
& {\mathbf J}:=  \left(\begin{array}{cc}
0 & b_2\\
b_4  & 0
\end{array}\right);\\
& A:= \left(\begin{array}{cc}
a_1 & hb_1\\
hb_3 & a_2
\end{array}\right) -\frac{h^2}4 {\mathbf J}^2,
\end{aligned}
$$
and using the analytic symbolic calculus of \cite{Sj}, we see that, as an operator on $H_{-\Im \psi -S_0 + \varepsilon_1|\Im x|, z_0}$, $P-\rho$ can be factorized into,
$$
P-\rho = ( hD_{x_n} {\mathbf I}_2+ B_+)(hD_{x_n}{\mathbf I}_2+B_-),
$$
where the symbol $\beta_\pm$ of $B_\pm$ does not depend on $\xi_n$, and satisfies,
$$
\beta_\pm (\alpha_x, \xi' ;h)=\pm  \left(\begin{array}{cc}
i\sqrt{a_1-\rho} & 0\\
0& \sqrt{\rho-a_2}
\end{array}\right) + \O(h).
$$
(Here, $\sqrt{\cdot}$ stand for the principal square-root of complex numbers with positive real part.)

In particular, $\xi_n{\mathbf I}_2 + \beta_+$ remains elliptic on $\Gamma (x)$, and thus, by applying an analytic parametrix of $ hD_{x_n} {\mathbf I}_2+ B_+$, the equation $Pu =\rho u$ becomes,
\be
\label{derivnorm1}
(hD_{x_n}+B_- )u=0\quad \mbox{in } H_{-\Im \psi -S_0 + \varepsilon_1|\Im x|, z_0}.
\ee
In the same way, $P-\rho$ can also be factorized into,
$$
P-\rho = ( hD_{x_n} {\mathbf I}_2+ \widetilde B_+)(hD_{x_n}{\mathbf I}_2+\widetilde B_-),
$$
where the symbol $\widetilde\beta_\pm$ of $ \widetilde B_\pm$  satisfies,
$$
 \widetilde \beta_\pm (\alpha_x, \xi' ;h)=\pm  \left(\begin{array}{cc}
-i\sqrt{a_1-\rho} & 0\\
0& \sqrt{\rho-a_2}
\end{array}\right) + \O(h).
$$
Since $\xi_n{\mathbf I}_2 + \widetilde \beta_+$ remains elliptic, too,  on $\Gamma (x)$, this also leads to,
\be
\label{derivnorm2}
(hD_{x_n}+\widetilde B_- )u=0\quad \mbox{in } H_{-\Im \psi -S_0 + \varepsilon_1|\Im x|, z_0}.
\ee

At that point, we can proceed as in \cite{Ma2, DaMa} (or, also, \cite[Section 10]{HeSj2}), and, by taking a realization on the real domain  of $B_-$ and $\widetilde B_-$ near $z_0$ (see \cite{Sj}), we conclude to the existence of four analytic pseudodifferential operators $Q_j$ and $\widetilde Q_j$ ($j=1,2$) acting on $L^2(\{ x_n=\delta_0\})$, such that, near $z_0$,
\be
\label{derivnorm1}
\begin{aligned}
& \left. h\frac{\partial u_1}{\partial x_n}\right|_{x_n=\delta_0}=Q_1 u_1 + \O(h)\Vert \chi u_2\Vert_{\{ x_n=\delta_0\}}=\widetilde Q_1 u_1 + \O(h)\Vert \chi u_2\Vert_{\{ x_n=\delta_0\}};\\
& \left. h\frac{\partial u_2}{\partial x_n}\right|_{x_n=\delta_0}=Q_2 u_2 + \O(h)\Vert \chi u_1\Vert_{\{ x_n=\delta_0\}}=\widetilde Q_2 u_2 + \O(h)\Vert \chi u_1\Vert_{\{ x_n=\delta_0\}},
\end{aligned}
\ee
where $\chi$ is some cut-off function that localizes near $z_0$, and the symbols $q_j$ and $\widetilde q_j$ of $Q_j$ and $\widetilde Q_j$  satisfy,
\be
\label{qj}
\begin{aligned}
& q_1(x', \xi'; h) = -\sqrt{(\xi')^2+V_1(x', \delta_0)-\rho} +\O(h);\\
& \widetilde q_1(x', \xi'; h) = \sqrt{(\xi')^2+V_1(x', \delta_0)-\rho} +\O(h);\\
& q_2(x', \xi'; h) = i\sqrt{\rho -(\xi')^2-V_2(x', \delta_0)} +\O(h);\\
&\widetilde  q_2(x', \xi'; h) = i\sqrt{\rho -(\xi')^2-V_2(x', \delta_0)} +\O(h).
\end{aligned}
\ee
More precisely (see \cite[Section 5]{Sj}), these operators (say, $Q_j$) are of the type,
\be
\label{factQj}
\begin{aligned}
Q_j v(x';h)=\frac1{(2\pi h)^{3(n-1)/2}}\int_{\R^{3(n-1)}}&e^{i(x'-y')\xi'/h - [(x'-\alpha_{x'})^2+(y'-\alpha_{x'})^2]/h}\\
&\times q_j(x',\xi';h)\chi_0 (x',y',\alpha_{x'},\xi')v(y')dy'd\alpha_{x'}d\xi',
\end{aligned}
\ee
where $\chi_0\in C_0^\infty (\R^{4(n-1)})$ is a cut-off function on a small enough neighborhood of the point $(z_0',z_0',z_0', \nabla_{x'}\psi (z_0))$.

(Note that, even for such realizations on the real of analytic pseudodifferential operators, $\xi'$ remains close to 0.)

Here we observe the important fact that $Q_1 +\widetilde Q_1$ is $\O(h)$, a consequence of which is that \eqref{derivnorm1} actually implies,
\be
\label{derivnorm}
\begin{aligned}
& \left. h\frac{\partial u_1}{\partial x_n}\right|_{x_n=\delta_0}= \O(h)\Vert \chi u_1\Vert_{\{ x_n=\delta_0\}}+ \O(h)\Vert \chi u_2\Vert_{\{ x_n=\delta_0\}};\\
& \left. h\frac{\partial u_2}{\partial x_n}\right|_{x_n=\delta_0}=Q_2 u_2 + \O(h)\Vert \chi u_1\Vert_{\{ x_n=\delta_0\}}.
\end{aligned}
\ee

Now, denoting by  $C_2$ the pseudodifferential operators of the same form as $Q_2$, with  symbol  $c_2:= [\Re\rho -(\xi')^2-V_2(x', \delta_0)]^{1/4}$, the symbolic calculus of \cite{Sj} gives us,
$$
 Q_2 = iC_2^* C_2 +\O(h).
$$
Turning back to formula \eqref{formimrho}, we can treat in that way all the points of $\partial\Omega_1\cap {\mathcal B}$, and, using a convenient partition of unity $(\chi_j^2)_{1\leq j \leq N}$ on this set, we can write,
$$
\begin{aligned}
\Im \rho & = -h^2\sum_{j}\Im  \la \chi_j\frac{\partial u}{\partial\nu},\chi_j u\ra_{L^2(\partial\Omega_1)} + \O(h^2)\Vert u\Vert^2_{L^2(\partial \Omega_1)}+\O(e^{-(2S_0+\delta)/h})\\
&=  -h\sum_{j}\Im  \la i\chi_jC_{2,j}u_2, C_{2,j}\chi_j u_2\ra_{L^2(\partial\Omega_1)}\\
&\hskip 1cm+\O(h^2)\Vert u\Vert^2_{L^2(\partial \Omega_1)}+\O(e^{-(2S_0+\delta)/h})\\
& =-h\sum_{j}\Vert C_{2,j}\chi_ju_2\Vert^2_{L^2(\partial\Omega_1)}+ \O(h^2)\Vert u\Vert^2_{L^2(\partial \Omega_1)}+\O(e^{-(2S_0+\delta)/h}),
\end{aligned}
$$
where the $C_{2,j}$'s  ($j=1,\dots ,N$) are the pseudodifferential operators constructed as before on  the support of $\chi_j$, and where we have used \eqref{derivnorm} and the fact that the commutator $[\chi_j, C_{2,j}]$ is $\O(h)$.

Using the ellipticity of the symbol of $C_{2,j}$, we deduce the existence of a constant $C_0>0$ such that,
$$
|\Im \rho |\geq \frac{h}{C_0}\sum_j\Vert \chi_ju_2\Vert^2_{L^2(\partial\Omega_1)}-C_0h^2\Vert u\Vert^2_{L^2(\partial \Omega_1)}-C_0e^{-(2S_0+\delta)/h},
$$
and thus, since $\sum_j\chi_j^2 =1$ on $\partial \Omega_1\cap {\mathcal B}$, while  $u=\O(e^{-(S_0+\delta')/h})$ (with $\delta' >0$) on $\partial \Omega_1\backslash {\mathcal B}$, we have proved,
\begin{proposition}\sl
\label{estrhou}
There exist two positive constants $C_0$ and $\delta$, such that,
$$
|\Im \rho |\geq \frac{h}{C_0}\Vert u_2\Vert^2_{L^2(\partial\Omega_1)}-C_0h^2\Vert u_1\Vert^2_{L^2(\partial \Omega_1)}-C_0e^{-(2S_0+\delta)/h},
$$
for all $h>0$ small enough.
\end{proposition}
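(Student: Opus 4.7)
My starting point is the Green/Stokes identity \eqref{formimrho}, which already expresses $\Im\rho$ as a sum of two boundary integrals on $\partial\Omega_1\cap{\mathcal B}$ modulo a remainder of size $\O(e^{-(2S_0+\delta)/h})$, with ${\mathcal B}$ an arbitrarily small neighborhood of $\Pi_x\widetilde{\mathcal T}_1$. The strategy is to convert these two boundary integrals into a manifestly sign-definite form using the local microlocal factorizations of $P-\rho$ derived earlier. Concretely, I would cover $\partial\Omega_1\cap{\mathcal B}$ by finitely many open sets $U_j$ on each of which the analytic factorizations leading to \eqref{derivnorm} are valid, and take a subordinate partition of unity $(\chi_j^2)_{1\leq j\leq N}$ with $\sum_j\chi_j^2=1$ on $\partial\Omega_1\cap{\mathcal B}$. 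Splitting $\la\partial_\nu u,u\ra_{L^2(\partial\Omega_1)}=\sum_j\la\chi_j\partial_\nu u,\chi_j u\ra$, the problem reduces to analyzing each pairing $\la\chi_j\partial_\nu u_k,\chi_j u_k\ra$ separately.

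For the first component, the crucial observation is that the two factorizations \eqref{derivnorm1} force $Q_1+\widetilde Q_1=\O(h)$; consequently $h\,\partial u_1/\partial\nu=\O(h)(\Vert \chi u_1\Vert+\Vert\chi u_2\Vert)$ on $\{x_n=\delta_0\}$, so its contribution to $\Im\rho$ is at most $\O(h^2)(\Vert u_1\Vert^2+\Vert u_2\Vert^2)$ on $\partial\Omega_1$. For the second component, using $h\,\partial u_2/\partial\nu=Q_{2,j}u_2+\O(h)\Vert\chi u_1\Vert$ together with the Sj\"ostrand symbolic calculus identity $Q_{2,j}=iC_{2,j}^*C_{2,j}+\O(h)$ and commuting $\chi_j$ past $C_{2,j}$ modulo $\O(h)$, one obtains
$$
-h\,\Im\la\chi_j\partial_\nu u_2,\chi_j u_2\ra = -h\Vert C_{2,j}\chi_ju_2\Vert^2+\O(h^2)(\Vert u_1\Vert^2+\Vert u_2\Vert^2).
$$
The interaction boundary term $h^2\Im\int(r_1\cdot\nu)u_2\overline{u_1}\,ds$ in \eqref{formimrho} is estimated by a Cauchy–Schwarz inequality with a small parameter, giving $\tfrac{h}{2C_0}\Vert u_2\Vert^2+Ch^3\Vert u_1\Vert^2$; the first piece is absorbable into the dominant negative term, the second already has the form sought in the statement.

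Summing over $j$ produces
$$
\Im\rho=-h\sum_j\Vert C_{2,j}\chi_ju_2\Vert^2+\O(h^2)(\Vert u_1\Vert_{L^2(\partial\Omega_1)}^2+\Vert u_2\Vert_{L^2(\partial\Omega_1)}^2)+\O(e^{-(2S_0+\delta)/h}),
$$
whose leading sum is non-positive. Taking absolute values and applying a G\aa rding inequality for the elliptic operator $C_{2,j}$ (whose symbol $[\Re\rho-(\xi')^2-V_2(x',\delta_0)]^{1/4}$ stays uniformly positive near $\xi'=0$) yields $\Vert C_{2,j}\chi_ju_2\Vert^2\geq c_0\Vert\chi_ju_2\Vert^2-\O(h)\Vert\chi_ju_2\Vert^2$. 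Since $\sum_j\chi_j^2=1$ on $\partial\Omega_1\cap{\mathcal B}$ while $u=\O(e^{-(S_0+\delta')/h})$ outside ${\mathcal B}$ by \eqref{type2}, one recovers $\sum_j\Vert\chi_j u_2\Vert^2=\Vert u_2\Vert_{L^2(\partial\Omega_1)}^2+\O(e^{-(2S_0+\delta'')/h})$; finally, for $h$ small the residual $\O(h^2)\Vert u_2\Vert^2$ error is absorbed into $\tfrac{h}{C_0}\Vert u_2\Vert^2$, leaving precisely the claimed inequality.

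The only real obstacle is bookkeeping: keeping track of the $\O(h)$ and $\O(h^2)$ remainders through the bilinear pairings on the boundary, and making sure that every off-diagonal contribution of the shape $\Vert u_1\Vert\cdot\Vert u_2\Vert$ is split via Cauchy–Schwarz with a small parameter into a piece absorbable by the main $h\Vert u_2\Vert^2$ term and a piece of the form $h^2\Vert u_1\Vert^2$ allowed by the statement. All the genuinely hard ingredients—the FBI/Sj\"ostrand factorizations $P-\rho=(hD_{x_n}{\mathbf I}_2+B_+)(hD_{x_n}{\mathbf I}_2+B_-)$ and its adjoint variant, the resulting Dirichlet-to-Neumann operators $Q_j,\widetilde Q_j$, and the cancellation $Q_1+\widetilde Q_1=\O(h)$—have already been established above.
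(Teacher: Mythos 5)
Your proposal follows essentially the same route as the paper: starting from \eqref{formimrho}, introducing a partition of unity $(\chi_j^2)$ on $\partial\Omega_1\cap{\mathcal B}$, invoking \eqref{derivnorm} and the factorization $Q_2=iC_2^*C_2+\O(h)$ to turn the boundary pairing into $-h\sum_j\Vert C_{2,j}\chi_ju_2\Vert^2$ plus $\O(h^2)\Vert u\Vert^2$ errors, and then using ellipticity of $C_{2,j}$ together with absorption of the $\O(h^2)\Vert u_2\Vert^2$ remainder into the leading $h$-term. The only difference is that you spell out the Cauchy--Schwarz splitting of the interaction term and the absorption step slightly more explicitly than the paper does; the argument is correct.
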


\section{Reductio ad absurdum}

From now on, we proceed by contradiction, assuming the existence of some constant $\delta_1>0$ such that,
\be
\label{absurd}
|\Im \rho | = \O (e^{-(2S_0 + \delta_1)/h}),
\ee
uniformly as $h\to 0_+$ (possibly along some sequence).

By Proposition \ref{estrhou}, this implies,
\be
\label{estu2-u1partial}
\Vert u_2\Vert^2_{L^2(\partial\Omega_1)}=\O (h\Vert u_1\Vert^2_{L^2(\partial \Omega_1)}+e^{-(2S_0+\delta')/h}),
\ee
for some positive constant $\delta$. Here, let us observe that the dependence with respect to $\Omega_1$ of the constants $C_0$ and $\delta$ that appear in Proposition \ref{estrhou} is only related to the ellipticity of $\xi_n +\sqrt{(\xi')^2 + V_2(x)}$ on $\{(x, \nabla\psi (x))\, ; \, x\in \partial\Omega_1\}$. In particular, they can be taken uniformly as long as the distance between $\partial\Omega_1$ and $\{V_2 =0\}$ remains larger than some fixed positive constant. Therefore, choosing a convenient 1-parameter family of such open sets $\Omega_1$ (for instance $\Omega_{1,t}:=\{ V_2(x)= -t\}$ with $t\in [t_0,t_1]$, $0<t_0<t_1$ small enough), and integrating with respect to this parameter, we obtain from \eqref{estu2-u1partial},
\be
\label{estu2-u1}
\Vert u_2\Vert^2_{L^2({\mathcal A})}=\O (h\Vert u_1\Vert^2_{L^2({\mathcal A})}+e^{-(2S_0+\delta')/h}),
\ee
where now ${\mathcal A}$ is a topological annulus surrounding $\widehat{\mathcal I}$, e.g. of the type,
\be
\label{Annulus}
{\mathcal A}=\{ -t_1 < V_2(x)<-t_0, x\in \Omega_1'\},
\ee
where $\Omega_1'$ stands for some fixed small enough neighborhood of $\widehat{\mathcal I}$.
Then, using the equation $Pu =\rho u$, we have,
$$
\la (-h^2\Delta + V_1 -\rho)u_1, u_1\ra_{L^2({\mathcal A})} = -h\la Wu_2, u_1\ra_{L^2({\mathcal A})},
$$
and thus, by Stokes formula,
$$
\begin{aligned}
\Vert h\nabla u_1\Vert^2_{\mathcal A} + & \la ( V_1 -\rho)u_1, u_1\ra_{\mathcal A}-h^2\la \partial_\nu u_1, u_1\ra_{\partial\mathcal A}\\
& =\O(h)\left(\Vert u_1\Vert^2_{\mathcal A}+ \Vert u_2\Vert^2_{\mathcal A}+h\Vert u_1\Vert^2_{\partial\mathcal A}+h \Vert u_2\Vert^2_{\partial\mathcal A}+\Vert h\nabla u_1\Vert^2_{\mathcal A} \right),
\end{aligned}
$$
where $\partial_\nu$ is the outward pointing normal derivative on $\partial A$. Taking the real part, and using that $\Re (V_1-\rho)$ is positive near $\mathcal A$, together with the fact that, thanks to \eqref{derivnorm}, one has  $\la \partial_\nu u_1, u_1\ra_{\partial\mathcal A}=\O (\Vert u\Vert^2_{\partial\mathcal A})$, we deduce (for $h$ sufficiently small),
$$
\Vert h\nabla u_1\Vert^2_{\mathcal A}+\Vert u_1\Vert^2_{\mathcal A}=\O (h\Vert u_2\Vert^2_{\mathcal A}+h^2\Vert u\Vert^2_{\partial\mathcal A}).
$$
Now, using both \eqref{estu2-u1} and \eqref{estu2-u1partial} (with $\partial{\mathcal A}$ instead of $\partial\Omega_1$), this gives us,
$$
\Vert h\nabla u_1\Vert^2_{\mathcal A}+\Vert u_1\Vert^2_{\mathcal A}=\O (h^2\Vert u_1\Vert^2_{\mathcal A}+h^2\Vert u_1\Vert^2_{\partial\mathcal A}+e^{-(2S_0+\delta')/h}),
$$
that is, for $h$ sufficiently small,
\be
\label{estu1u1bord}
h^2\Vert \nabla u_1\Vert^2_{\mathcal A}+\Vert u_1\Vert^2_{\mathcal A}=\O (h^2\Vert u_1\Vert^2_{\partial\mathcal A}+e^{-(2S_0+\delta')/h}).
\ee
At this point, we can use the standard Sobolev estimate, 
$$
\Vert u_1\Vert^2_{\partial\mathcal A}=\O(\Vert u_1\Vert^2_{\mathcal A}+ \Vert u_1\Vert_{\mathcal A}\Vert \nabla u_1\Vert_{\mathcal A}),
$$
that implies,
$$
h^2\Vert u_1\Vert^2_{\partial\mathcal A}=\O( h\Vert u_1\Vert^2_{\mathcal A}+ h^3\Vert \nabla u_1\Vert^2_{\mathcal A}),
$$
and thus, once inserted into \eqref{estu1u1bord}, permits us to conclude that, for $h$ small enough,
$$
\Vert h\nabla u_1\Vert^2_{\mathcal A}+\Vert u_1\Vert^2_{\mathcal A}=\O (e^{-(2S_0+\delta')/h}).
$$
Summing up, and gathering with \eqref{estu2-u1}, we have proved,
\begin{proposition}\sl
\label{propestabs}
Under Assumption \eqref{absurd}, and with ${\mathcal A}$ given in \eqref{Annulus}, there exists $\delta_1 >0$ such that, 
$$
\Vert u\Vert^2_{L^2({\mathcal A})}=\O (e^{-(S_0+\delta_1)/h}).
$$
\end{proposition}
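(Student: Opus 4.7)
The starting point would be Proposition \ref{estrhou}, which under the contradiction hypothesis \eqref{absurd} forces the boundary trace of $u_2$ to be dominated by that of $u_1$. Since the left-hand side in Proposition \ref{estrhou} is $\O(e^{-(2S_0+\delta_1)/h})$, rearranging gives, for some $\delta'>0$,
\[
\Vert u_2\Vert^2_{L^2(\partial\Omega_1)} \leq C h \Vert u_1\Vert^2_{L^2(\partial\Omega_1)} + C e^{-(2S_0+\delta')/h}.
\]
The crucial observation is that the constants in Proposition \ref{estrhou} depend on $\Omega_1$ only through a positive lower bound on the distance between $\partial\Omega_1$ and $\{V_2=0\}$. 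This allows me to let $\Omega_1$ vary in the one-parameter family $\Omega_{1,t}:=\{V_2>-t\}\cap\Omega_1'$ for $t\in [t_0,t_1]$, apply the boundary estimate uniformly, and integrate in $t$ via the coarea formula. This converts the boundary bound into a volume bound on the annulus $\mathcal{A}$ of the form $\Vert u_2\Vert^2_{L^2(\mathcal{A})}=\O(h\Vert u_1\Vert^2_{L^2(\mathcal{A})}+e^{-(2S_0+\delta')/h})$.

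Next I would exploit the ellipticity of $P_1$ in the sea in order to convert this into an estimate on $u_1$ alone. Since $\mathcal{A}\subset\{V_2<0\}$, by Assumption 1 one has $\Re(V_1-\rho)\geq c_0>0$ on $\mathcal{A}$. Pairing the first row of $Pu=\rho u$, namely $(P_1-\rho)u_1=-hWu_2$, with $u_1$ on $\mathcal{A}$, applying Stokes' formula and taking real parts produces on the left the coercive sum $\Vert h\nabla u_1\Vert^2_{\mathcal{A}}+c_0\Vert u_1\Vert^2_{\mathcal{A}}$. The boundary contribution $h^2\la\partial_\nu u_1,u_1\ra_{\partial\mathcal{A}}$ is controlled through the normal derivative formula \eqref{derivnorm}, which is applicable because, by choice of $t_0,t_1$, the boundary $\partial\mathcal{A}$ lies in the region where Proposition \ref{Hphi} and its consequences hold. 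The outcome is an estimate of the type
\[
\Vert h\nabla u_1\Vert^2_{\mathcal{A}}+\Vert u_1\Vert^2_{\mathcal{A}} = \O\bigl(h\Vert u_2\Vert^2_{\mathcal{A}}+h^2\Vert u\Vert^2_{\partial\mathcal{A}}\bigr).
\]

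The last step is to close the loop between the two inequalities. Feeding the annulus bound for $\Vert u_2\Vert^2_{\mathcal{A}}$ into the right-hand side of the energy estimate converts the $h\Vert u_2\Vert^2_{\mathcal{A}}$ contribution into $h^2\Vert u_1\Vert^2_{\mathcal{A}}$ plus the exponential remainder, and the former is absorbed on the left-hand side for $h$ small. The boundary term $h^2\Vert u\Vert^2_{\partial\mathcal{A}}$ is handled by the standard Sobolev trace $\Vert u_1\Vert^2_{\partial\mathcal{A}}\lesssim\Vert u_1\Vert^2_{\mathcal{A}}+\Vert u_1\Vert_{\mathcal{A}}\Vert\nabla u_1\Vert_{\mathcal{A}}$ (together with the boundary estimate again for the $u_2$ piece), which redistributes the $h^2$ into $h\Vert u_1\Vert^2_{\mathcal{A}}+h^3\Vert\nabla u_1\Vert^2_{\mathcal{A}}$, again absorbable by the coercive left-hand side. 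What remains is $\Vert u_1\Vert^2_{\mathcal{A}}=\O(e^{-(2S_0+\delta')/h})$, and then the annulus bound on $u_2$ gives the same control on the second component, establishing a bound considerably stronger than the stated $\O(e^{-(S_0+\delta_1)/h})$ for any $\delta_1\in(0,\delta')$.

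The main delicacy is bookkeeping of the powers of $h$: the loop closes only because the correction on the right of Proposition \ref{estrhou} carries an $h^2$ (not just $h$) in front of $\Vert u_1\Vert^2_{\partial\Omega_1}$, and because the Sobolev trace supplies an extra factor of $h$ after Cauchy--Schwarz. A second point requiring care is the justification of \eqref{derivnorm} on the entire annulus boundary rather than at a single point of type $1$; this is why one chooses $\Omega_1'$ as a small neighborhood of $\widehat{\mathcal{I}}$ so that the level sets $\{V_2=-t\}\cap\Omega_1'$ stay in the domain of validity of the pseudodifferential normal-form analysis of Section \ref{Preliminaries}.
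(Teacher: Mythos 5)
Your proposal is correct and reproduces the paper's argument essentially step for step: the same passage from Proposition \ref{estrhou} to the boundary estimate, the same integration over the family of level sets $\{V_2=-t\}$ to obtain the annulus bound on $u_2$, the same coercive energy identity for $u_1$ from the first row of $Pu=\rho u$ with the boundary term controlled by \eqref{derivnorm}, and the same absorption via the Sobolev trace inequality. The bookkeeping of powers of $h$ you highlight is exactly the point the paper relies on, and the final bound $\O(e^{-(2S_0+\delta')/h})$ for the squared norm is indeed what the paper obtains.
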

The purpose of the next sections will be to propagate this smallness of $u$ across the shore $\partial{\mathcal M}$, the crest $\partial\Omega_0$, and up to the interior of the well $U$, in such a way that one finally gets a contradiction with Assumption 4.

\section{Propagation across the shore $\partial{\mathcal M}$}

In order to propagate the smallness of $u$ across $\partial{\mathcal M}$, we adopt the same strategy as in \cite{DaMa}, that is, we start by establishing global Carleman estimates around $\partial{\mathcal M}$.

We fix some  $\mu_0 >0$  sufficiently small, and, for $\mu\in (0,\mu_0]$, we consider the neighborhood ${\mathcal N}_\mu$ of $\partial{\mathcal M}$ given by,
\be
\label{Nmu}
{\mathcal N}_\mu:=\{ -\mu_0 \leq V_2(x)\leq \mu, \, x\in \Omega_1\},
\ee
(where, as before, $\Omega_1$ is some fixed small enough neighborhood of $\widehat{\mathcal I}$).

We also set,
$$
\begin{aligned}
& \Sigma := {\mathcal N}\cap \{ V_2=\mu\};\\
& \Sigma_0 := {\mathcal N}\cap \{ V_2=-\mu_0\}.
\end{aligned}
$$
By the same geometrical considerations as in \cite[Section 6]{DaMa}, we see that, on $\Sigma$, the function $\varphi$ satisfies,
$$
\varphi\left|_{\Sigma\mu}\right.  \geq S_0 - c_0\mu ^{3/2},
$$
where the constant $c_0>0$ does not depend on $\mu$. Therefore, using \eqref{agmon} and Sobolev estimates, we deduce that, for any $\mu, \varepsilon >0$ small enough, we have,
\be
\label{estbordN1}
u=\O(e^{-(S_0-c_0\mu^{3/2}-\varepsilon)/h})\quad \mbox{uniformly on } {\mathcal N}_\mu,
\ee
and the same holds for all the derivatives of $u$.

In addition, since $\Sigma_0$ stays away from $\widehat{\mathcal I}$, by Proposition \ref{propestabs} (plus standard Sobolev estimates), we know the existence of some $\delta_0>0$ constant such that,
\be
\label{estbordN2}
\Vert u\Vert_{H^2(\Sigma_0)} =\O(e^{-(S_0+\delta_0)/h}).
\ee
We plan to extend this estimate up to $\Sigma$. We set,
\be
\label{defv}
v(x):= e^{\alpha (\mu - V_2(x))/h} u(x),
\ee
where $\alpha >0$ satisfies,
$$
2\alpha \mu_0 \leq \delta_0.
$$
Then, by \eqref{estbordN1}, for any $\varepsilon >0$, $v$ satisfies,
\be
\label{estbordN1v}
\Vert v\Vert_{H^2(\Sigma)}=\O(e^{-(S_0-c_0\mu^{3/2}-\varepsilon)/h}),
\ee
and, by \eqref{estbordN2},
\be
\label{estbordN2v}
\Vert v\Vert_{H^2(\Sigma_0)} =\O(e^{-S_0/h}).
\ee

 Moreover, $v$ is solution to,
$$
(A+iB)v=0,
$$
with,
$$
\begin{aligned}
& A:= P-\Re\rho -\alpha^2(\nabla V_2)^2;\\
& B:=-2h\alpha(\nabla V_2)\cdot D_x +ih\alpha\Delta V_2-\Im\rho-ih\alpha\left(\begin{array}{cc}
0 & - r_1\cdot \nabla V_2\\
\overline{r_1}\cdot \nabla V_2 & 0
\end{array}\right),
\end{aligned}
$$
where we have used \eqref{interW}. In particular, $A$ and $B$ are formally selfadjoint, and we have,
\be
\label{A+iB}
0=\Vert (A+iB)v\Vert^2_{L^2({\mathcal N}_\mu)} = \Vert Av\Vert^2_{L^2({\mathcal N}_\mu)} +\Vert Bv\Vert^2_{L^2({\mathcal N}_\mu)} +2\Im \la Av, Bv\ra_{L^2({\mathcal N}_\mu)}.
\ee
As in \cite{DaMa}, the key-point is the following Carleman estimate:
\begin{lemma}\sl 
\label{carleman}
If $\alpha$ and $\mu_0$ are chosen sufficiently small, there exists a constant $C>0$ and, for all $\varepsilon >0$, a constant $C_\varepsilon >0$, such that,
$$
\begin{aligned}
\Im \la Av, Bv\ra_{L^2({\mathcal N}_\mu)}\geq  \frac{h}{C}\Vert v_2\Vert^2_{L^2({\mathcal N}_\mu)}-Ch\Vert v_1&\Vert^2_{L^2({\mathcal N}_\mu)} -Ch\Vert Av\Vert^2_{L^2({\mathcal N}_\mu)}\\ 
&-C_\varepsilon e^{-2(S_0-c_0\mu^{3/2}-\varepsilon)/h},
\end{aligned}
$$
uniformly for $h>0$ small enough.
\end{lemma}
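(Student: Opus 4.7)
The plan is to follow the general strategy of \cite[Section 6]{DaMa}, with the modifications required by the matrix character of $A$ and $B$. Using the formal selfadjointness of both operators, an integration by parts on $\mathcal{N}_\mu$ yields
$$
\Im \la Av, Bv\ra_{L^2(\mathcal{N}_\mu)} = \frac{1}{2i}\la [A,B]v, v\ra_{L^2(\mathcal{N}_\mu)} + R_\partial,
$$
where $R_\partial$ is a sum of boundary integrals over $\Sigma \cup \Sigma_0$ involving $v$ and its first $h$-derivatives. The a priori bound \eqref{estbordN1v} together with Sobolev trace estimates controls the contribution on $\Sigma$ by $\O(e^{-2(S_0 - c_0 \mu^{3/2} - \varepsilon)/h})$, while \eqref{estbordN2v} controls the part on $\Sigma_0$ by $\O(e^{-2S_0/h})$, both absorbable into the stated exponential remainder.

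I then decompose $A = A_D + h W_\mathrm{mat}$ and $B = B_D \mathbf{I}_2 + h\alpha B_\mathrm{off}$, where $A_D$ and $B_D := -2h\alpha \nabla V_2 \cdot D_x + ih\alpha \Delta V_2 - \Im\rho$ are the scalar, formally selfadjoint diagonal parts. Routine symbolic calculus shows that $[A_D, h\alpha B_\mathrm{off}]$, $[h W_\mathrm{mat}, B_D \mathbf{I}_2]$ and $[h W_\mathrm{mat}, h\alpha B_\mathrm{off}]$ are all $\O_{L^2 \to L^2}(h^2)$, so
$$
[A, B] = [A_D, B_D \mathbf{I}_2] + \O_{L^2 \to L^2}(h^2),
$$
and the leading part is diagonal. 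On the $j$-th slot, $[A_j, B_D]$ has semiclassical principal symbol $ih\{a_j, b_D\}$, with $a_j = \xi^2 + V_j - \Re\rho - \alpha^2 |\nabla V_2|^2$ and $b_D = -2\alpha \nabla V_2 \cdot \xi$, and a direct computation yields
$$
\tfrac{h}{2}\{a_j, b_D\} = h\alpha\, \nabla V_j \cdot \nabla V_2 - 2h\alpha\, \xi \cdot \mathrm{Hess}(V_2)\, \xi - 2h\alpha^3\, \nabla V_2 \cdot \mathrm{Hess}(V_2)\, \nabla V_2.
$$

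The positivity analysis then splits by component. The non-trapping hypothesis on $V_2$ gives $|\nabla V_2| \geq c_1 > 0$ on $\mathcal{N}_\mu$ for $\mu_0$ small, and on the real characteristic set $\{a_2 = 0\}$ one has $|\xi|^2 = \Re\rho - V_2 + \alpha^2 |\nabla V_2|^2 = \O(\mu_0 + \alpha^2)$; hence the leading term $h\alpha |\nabla V_2|^2$ dominates the two others once $\alpha$ and $\mu_0$ are small enough. A convexification plus sharp G\aa rding argument (equivalently, writing $\frac{1}{2i}[A_2, B_D] = h R + A_2 S + S^* A_2 + \O(h^2)$ with $R$ of strictly positive symbol and $S$ bounded) then produces the main positive contribution $\frac{h}{C} \|v_2\|^2_{L^2(\mathcal{N}_\mu)}$, modulo an error $Ch\|A_2 v_2\|^2 \leq Ch\|Av\|^2 + \O(h^3)\|v_2\|^2$. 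For $j = 1$, one has $V_1 \geq c_2 > 0$ on $\mathcal{N}_\mu$ and $A_1$ is elliptic, so the corresponding term is bounded in absolute value by $Ch\|v_1\|^2 + Ch\|Av\|^2$, via the standard elliptic estimate $h^2 \|\nabla v_1\|^2 \leq C(\|Av\|^2 + \|v\|^2)$ plus boundary contributions already accounted for, combined with Cauchy--Schwarz.

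I expect the main obstacle to be the careful bookkeeping of the $\O(h^2)$ commutator errors and of the off-diagonal cross terms generated by $hW$ and $h\alpha B_\mathrm{off}$: these couple $v_1$ and $v_2$, and to close the estimate one must ensure that, after integration against $v$, they remain strictly dominated by the positive gain $\frac{h}{C} \|v_2\|^2$. This is what forces the smallness of $\alpha$ and $\mu_0$ stated in the lemma, and constitutes the main novelty compared with the scalar estimate of \cite{DaMa}.
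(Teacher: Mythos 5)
Your proposal is correct and follows essentially the same route as the paper: Green's formula reduces the left-hand side to $\frac{i}{2}\la [A,B]v,v\ra$ plus boundary terms controlled by \eqref{estbordN1v}--\eqref{estbordN2v}, the matrix cross-commutators involving $h{\mathbf W}$ and $h\alpha{\mathbf R}$ are lower-order errors, and the positive gain on $v_2$ comes precisely from the term $\alpha h\,\nabla V_2\cdot\nabla{\mathbf V}$ (i.e.\ $\alpha h|\nabla V_2|^2$ on the second slot, with $\nabla V_2\neq 0$ near $\partial{\mathcal M}$), while the Hessian term $-2\alpha\xi\cdot{\rm Hess}(V_2)\xi$ is absorbed into $Ch\Vert Av\Vert^2$ using the smallness of $|\xi|^2$ on $\{a_2=0\}$ — which is exactly what forces $\mu_0$ and $\alpha$ small. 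The only cosmetic difference is that the paper carries out the positivity by hand (integration by parts and Cauchy--Schwarz on $h^2\la Q_2v,v\ra$, exploiting $|{\mathbf V}-\Re\rho-\alpha^2(\nabla V_2)^2|=\O(\mu_0+\alpha^2)$ on the $v_2$-component) rather than through a formal G\aa rding-type decomposition, and it controls the first-order commutator errors via the a priori bound $\Vert h\nabla v\Vert^2=\O(\Vert Av\Vert^2+\Vert v\Vert^2)$ rather than by declaring them $\O_{L^2\to L^2}(h^2)$.
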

\begin{proof} Using Green's formula, together with \eqref{estbordN1v}-\eqref{estbordN2v} and the fact that $\partial{\mathcal N}_\mu = \Sigma_0\cup\Sigma$, we obtain,
\be
\label{imAB}
\Im \la Av, Bv\ra_{L^2({\mathcal N}_\mu)}=\frac{i}2\la [A,B]v,v\ra_{L^2({\mathcal N}_\mu)}+\O(e^{-2(S_0-c_0\mu^{3/2}-\varepsilon)/h}),
\ee
uniformly with respect to $h$, and with $\varepsilon, \mu$ arbitrarily small. Moreover, setting
$$\begin{aligned}
& {\mathbf V}:=\left(\begin{array}{cc}
V_1 &0\\
0 & V_2
\end{array}\right),\, 
{\mathbf W}:=\left(\begin{array}{cc}
0 & W\\
W^* & 0
\end{array}\right), \, 
{\mathbf R}:=\left(\begin{array}{cc}
0 & - r_1\cdot \nabla V_2\\
\overline{r_1}\cdot \nabla V_2 & 0
\end{array}\right),\\
& A_0:= -h^2\Delta + {\mathbf V} -\Re\rho -\alpha^2(\nabla V_2)^2=A-h{\mathbf W},\\
&  B_0:= -2h\alpha(\nabla V_2)\cdot D_x +ih\alpha\Delta V_2-\Im\rho=B+ih\alpha {\mathbf R},
\end{aligned}
$$
we have,
\be
\label{formAB}
\frac{i}2[A,B]=\frac{i}2[A_0,B_0]+h[{\mathbf W},B]-ih\alpha[A_0,{\mathbf R}],
\ee
and, as in \cite[Section 6]{DaMa}, a straightforward computation leads to,
\be
\label{A0B0}
\frac{i}2[A_0,B_0]=\alpha h^3\left(Q_2+2Q_1+\frac12(\Delta^2V_2)\right)) + \alpha h \nabla V_2\cdot \nabla{\mathbf V}-\alpha^3 h Q_0
\ee
with,
$$
\begin{aligned}
& Q_2:= 2\sum_{j,k}(\partial_j\partial_kV_2)\partial_j\partial_k;\\
&Q_1:= \nabla(\Delta V_2)\cdot \nabla ;\\
&Q_0:= (\nabla V_2)\cdot(\nabla (\Delta V_2)^2).
\end{aligned}
$$
Here, we first observe,
$$
\la  (\nabla V_2\cdot \nabla{\mathbf V})v,v\ra_{{\mathcal N}_\mu }= \la (\nabla V_2\cdot \nabla V_1) v_1,v_1\ra + \Vert (\nabla V_2)v_2\Vert^2_{{\mathcal N}_\mu }
$$
and therefore, since $\nabla V_2$ never vanishes on $\partial {\mathcal M}$, there exists a constant $C_0>0$ (depending only on the geometry of $V_1$ and $V_2$ near $\partial{\mathcal M}$), such that,
\be
\label{estgrad}
\la  (\nabla V_2\cdot \nabla{\mathbf V})v,v\ra_{{\mathcal N}_\mu }\geq \frac1{C_0}\Vert v_2\Vert^2_{{\mathcal N}_\mu }-C_0\Vert v_1\Vert^2_{{\mathcal N}_\mu }.
\ee

Then, by doing an integration by parts and by using \eqref{estbordN1v}-\eqref{estbordN2v} we first see,
\be
\label{estA}
\Vert h\nabla v\Vert_{{\mathcal N}_\mu }^2 =\O(\Vert Av\Vert_{{\mathcal N}_\mu }^2+\Vert v\Vert_{{\mathcal N}_\mu }^2+e^{-2(S_0-c_0\mu^{3/2}-\varepsilon)/h}).
\ee
and then,
$$
\begin{aligned}
h^2 \la Q_2 v,v\ra_{{\mathcal N}_\mu } & =\O (\Vert h\nabla v\Vert_{{\mathcal N}_\mu }^2 + h^2\Vert v\Vert_{{\mathcal N}_\mu }^2+e^{-2(S_0-c_0\mu^{3/2}-\varepsilon)/h})\\
& =\O (|\la -h^2\Delta v, v\ra_{{\mathcal N}_\mu }|+ h^2\Vert v\Vert_{{\mathcal N}_\mu }^2+e^{-2(S_0-c_0\mu^{3/2}-\varepsilon)/h}).
\end{aligned}
$$
Therefore, for any constant $C\geq 1$ arbitrarily large, and $h>0$ sufficiently small,
$$
h^2 |\la Q_2 v,v\ra_{{\mathcal N}_\mu } | \leq C\Vert h^2\Delta v\Vert_{{\mathcal N}_\mu }^2+\frac2{C}\Vert v\Vert_{{\mathcal N}_\mu }^2+C_\varepsilon e^{-2(S_0-c_0\mu^{3/2}-\varepsilon)/h}
$$
(where $C_\varepsilon$ depends on $\varepsilon$ only, which in turn is arbitrarily small). We deduce (with some new constant $C'>0$),
\be
\label{estQ2}
\begin{aligned}
h^2 |\la Q_2 v,v\ra_{{\mathcal N}_\mu } | \leq & 2C\Vert A v\Vert_{{\mathcal N}_\mu }^2+2C\Vert ({\mathbf V}-\Re\rho -\alpha^2(\nabla V_2)^2)v\Vert_{{\mathcal N}_\mu }^2+C'h\Vert  v\Vert_{{\mathcal N}_\mu }^2\\
& +C'h\Vert h\nabla v\Vert_{{\mathcal N}_\mu }^2+\frac2{C}\Vert v\Vert_{{\mathcal N}_\mu }^2+C_\varepsilon e^{-2(S_0-c_0\mu^{3/2}-\varepsilon)/h}.
\end{aligned}
\ee
Now, for $h$ small enough, on ${\mathcal N}_\mu$ we have $|V_2-\Re\rho-\alpha^2(\nabla V_2)^2|\leq 2\mu_0 + \alpha^2|\nabla V_2|^2
$, and thus,
\be
\label{estpot}
\Vert ({\mathbf V}-\Re\rho -\alpha^2(\nabla V_2)^2)v\Vert_{{\mathcal N}_\mu }^2\leq (2\mu_0+c\alpha^2)^2\Vert v_2\Vert_{{\mathcal N}_\mu}^2+C''\Vert v_1\Vert_{{\mathcal N}_\mu }^2
\ee
As a consequence, by first choosing $C$ sufficiently large, then $\mu_0$ and $\alpha$ sufficiently small,  we can make true the inequality
\be
\label{choixconst}
\frac2{C}+2C(2\mu_0+c\alpha^2)^2\leq \frac1{4C_0},
\ee
where $C_0$ is the constant appearing in \eqref{estgrad}.

Inserting \eqref{estpot}-\eqref{choixconst} into \eqref{estQ2}, and using \eqref{estA}, for $h$ small enough we obtain (possibly by increasing $C''$),
\be
\label{estQ2bis}
h^2 |\la Q_2 v,v\ra_{{\mathcal N}_\mu } | \leq  C''\Vert A v\Vert_{{\mathcal N}_\mu }^2+\frac1{2C_0}\Vert  v_2\Vert_{{\mathcal N}_\mu }^2+C''\Vert  v_1\Vert_{{\mathcal N}_\mu }^2+C_\varepsilon e^{-2(S_0-c_0\mu^{3/2}-\varepsilon)/h}.
\ee
Since $Q_1$ is a vector-field, by similar (but rougher) arguments, we also have,
\be
\begin{aligned}
\label{estQ_1}
h |\la Q_1 v,v\ra_{{\mathcal N}_\mu } | & =\O(\Vert h\nabla v\Vert_{{\mathcal N}_\mu } \Vert  v\Vert_{{\mathcal N}_\mu })=\O(\Vert  h\nabla v\Vert_{{\mathcal N}_\mu }^2 + \Vert v\Vert_{{\mathcal N}_\mu }^2)\\
& = \O(\Vert  A v\Vert_{{\mathcal N}_\mu }^2 + \Vert v\Vert_{{\mathcal N}_\mu }^2+ e^{-2(S_0-c_0\mu^{3/2}-\varepsilon)/h})
\end{aligned}
\ee
and, of course,
\be
\label{estQ_0}
 |\la Q_0 v,v\ra_{{\mathcal N}_\mu } |=\O(\Vert  v\Vert_{{\mathcal N}_\mu }^2 ).
\ee
Using \eqref{estQ2bis}-\eqref{estQ_0} together with \eqref{A0B0} and \eqref{estgrad}, we obtain,
\be
\label{commA0B0}
\begin{aligned}
\Re\frac{i}2\la [A_0,B_0]v,v\ra_{{\mathcal N}_\mu } \geq \frac{\alpha h}{3C_0}\Vert  v_2\Vert_{{\mathcal N}_\mu }^2-C_1\alpha h&(\Vert  Av\Vert_{{\mathcal N}_\mu }^2+\Vert  v_1\Vert_{{\mathcal N}_\mu }^2)\\
& -C_\varepsilon e^{-2(S_0-c_0\mu^{3/2}-\varepsilon)/h},
\end{aligned}
\ee
with $C_1\geq 1$ independent of $h$, $\alpha$ and $\mu$, all of them small enough.

In addition, we also have,
$$
[{\mathbf W},B]=[{\mathbf W},B_0]-i\alpha h [{\mathbf W},{\mathbf R}],
$$
and thus, since $B_0$ is a scalar first-order semiclassical differential operator, and $R$ is 0-th order,
$$
[{\mathbf W},B]=\alpha h F_1(x,hD_x)
$$
where $F_1(x,hD_x)$ is a matrix of first-order semiclassical differential operators with bounded coefficients. As a consequence, by the same arguments as before, we have,
\be
\la h[{\mathbf W},B]v,v\ra =\O( h^2\Vert  Av\Vert_{{\mathcal N}_\mu }^2 +h^2 \Vert v\Vert_{{\mathcal N}_\mu }^2 + e^{-2(S_0-c_0\mu^{3/2}-\varepsilon)/h}).
\ee
Finally, since since $A_0$ is a scalar second-order semiclassical differential operator, we see that $h^{-1}[A_0,{\mathbf R}]$ is a matrix of first-order semiclassical differential operators with bounded coefficients, too, and thus,
\be
\label{commA0R}
\la i\alpha h[A_0,{\mathbf R}]v,v\ra=\O( h^2\Vert  Av\Vert_{{\mathcal N}_\mu }^2 +h^2 \Vert v\Vert_{{\mathcal N}_\mu }^2 + e^{-2(S_0-c_0\mu^{3/2}-\varepsilon)/h}).
\ee
Inserting \eqref{commA0B0}-\eqref{commA0R} into \eqref{formAB}, and using \eqref{imAB}, the result follows.
\end{proof}

For $h$ small enough, Lemma \ref{carleman} and \eqref{A+iB} imply,
\be
\label{Av2}
 \Vert Av\Vert^2_{{\mathcal N}_\mu} +\Vert Bv\Vert^2_ {{\mathcal N}_\mu} +h\Vert v_2\Vert^2_ {{\mathcal N}_\mu} =\O(h\Vert v_1\Vert^2_ {{\mathcal N}_\mu} + e^{-2(S_0-c_0\mu^{3/2}-\varepsilon)/h}).
 \ee
 Now, we prove,
 \begin{lemma}\sl 
 \label{LemmaAv}
 For any $C\geq 1$, there exists $C_1=C_1(C)>0$ such that, for all $h>0$ small enough, one has,
 $$
   \Vert Av\Vert_{{\mathcal N}_\mu}^2 \geq \frac{h}{C_1}  \Vert h\nabla v\Vert_{{\mathcal N}_\mu}^2+ \frac1{C_1}   \Vert v_1\Vert_{{\mathcal N}_\mu}^2  -\frac{h}C\Vert v_2\Vert_{{\mathcal N}_\mu}^2-C_1e^{-2(S_0-c_0\mu^{3/2}-\varepsilon)/h}.
 $$
 \end{lemma}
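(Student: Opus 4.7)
The strategy is to exploit the partial ellipticity of $A$ on ${\mathcal N}_\mu$: since $\partial{\mathcal M}$ lies outside $\widehat{\mathcal I}\supset U=\{V_1\leq 0\}$, continuity gives $V_1\geq c_1>0$ on ${\mathcal N}_\mu$ for $\mu_0$ small enough. Hence, once $\alpha$ and $h$ are small, the scalar diagonal block $A_1:=-h^2\Delta+V_1-\Re\rho-\alpha^2(\nabla V_2)^2$ has a coercive potential $\geq c_1/2$, whereas the second diagonal block $A_2:=-h^2\Delta+V_2-\Re\rho-\alpha^2(\nabla V_2)^2$ has potential bounded by $C_4\mu_0$ in absolute value. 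The former provides full $H^1$-ellipticity on $v_1$, while the latter only controls $\|h\nabla v_2\|^2$ up to a small multiple of $\|v_2\|^2$; this small multiple is precisely what the term $-(h/C)\|v_2\|^2$ on the right allows for.

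First I apply Green's formula to $\la A_1 v_1, v_1\ra_{{\mathcal N}_\mu}$ and take the real part. The boundary integrals are $\O(e^{-2(S_0-c_0\mu^{3/2}-\varepsilon)/h})$ by \eqref{estbordN1v}-\eqref{estbordN2v}. Writing $A_1 v_1=(Av)_1-hWv_2$, estimating $\|Wv_2\|\leq C(\|v_2\|+\|h\nabla v_2\|)$, and using Cauchy-Schwarz together with the lower bound $V_1-\Re\rho-\alpha^2|\nabla V_2|^2\geq c_1/2$, this gives
\be
\label{planI}
\|h\nabla v_1\|^2_{{\mathcal N}_\mu}+\|v_1\|^2_{{\mathcal N}_\mu}\leq C_3\|Av\|^2_{{\mathcal N}_\mu}+C_3 h^2\|v_2\|^2_{{\mathcal N}_\mu}+C_3 h^2\|h\nabla v_2\|^2_{{\mathcal N}_\mu}+ C_3 e^{-2(S_0-c_0\mu^{3/2}-\varepsilon)/h}.
\ee
Analogously, Green's formula on $\la A_2 v_2, v_2\ra_{{\mathcal N}_\mu}$ isolates $\|h\nabla v_2\|^2$. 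Using $A_2 v_2=(Av)_2-hW^*v_1$, the uniform smallness $|V_2-\Re\rho-\alpha^2|\nabla V_2|^2|\leq C_4\mu_0$, and Cauchy-Schwarz with an auxiliary parameter $\eta>0$, one obtains
\be
\label{planII}
\|h\nabla v_2\|^2\leq (C_4\mu_0+\eta)\|v_2\|^2+C_\eta\|Av\|^2+C_\eta h^2(\|v_1\|^2+\|h\nabla v_1\|^2)+C_\eta e^{-2(S_0-c_0\mu^{3/2}-\varepsilon)/h}.
\ee

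Multiplying \eqref{planII} by $h$ and adding it to \eqref{planI}, the cross-terms $C_3 h^2\|h\nabla v_2\|^2$ and $C_\eta h^3(\|v_1\|^2+\|h\nabla v_1\|^2)$ are absorbed into the left-hand side for $h$ small enough (depending on $\eta$), yielding an estimate of the form
\be
\label{planIII}
\|h\nabla v_1\|^2+\|v_1\|^2+h\|h\nabla v_2\|^2\leq C_5(\eta)\|Av\|^2+h\bigl(C_4\mu_0+\eta+\O(h)\bigr)\|v_2\|^2+C_5(\eta)e^{-2(S_0-c_0\mu^{3/2}-\varepsilon)/h}.
\ee
Given $C\geq 1$, I fix $\eta$ (for instance $\eta=1$) and then choose $C_1=C_1(C)$ large enough that $C_1\geq C_5(\eta)$ and $C_1\geq C(C_4\mu_0+\eta+1)$. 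Dividing \eqref{planIII} by $C_1$, and using $\|h\nabla v_1\|^2\geq h\|h\nabla v_1\|^2$ for $h\leq 1$ to bound the left-hand side from below by $\frac{h}{C_1}\|h\nabla v\|^2+\frac{1}{C_1}\|v_1\|^2$, the claim follows after rearranging.

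The main obstacle is the off-diagonal coupling by $hW$ and $hW^*$, which mixes $v_1$, $v_2$ and their gradients, combined with the need to order the free parameters correctly: $\mu_0$ and $\alpha$ must be fixed (small) at the outset so that $A_1$ is coercive and the $A_2$-potential is small; the Cauchy-Schwarz parameter $\eta$ is of order $1$; only then is $C_1=C_1(C)$ chosen large enough so that the residual coefficient of $\|v_2\|^2$ becomes $\leq h/C$. The factor $h$ in front of $W$ is crucial, as it is what makes the cross-terms absorbable into the elliptic main parts.
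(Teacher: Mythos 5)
Your proof is correct, but it follows a genuinely different route from the paper's. The paper expands $\Vert Av\Vert_{{\mathcal N}_\mu}^2=\Vert A_0v\Vert^2+\Vert h{\mathbf W}v\Vert^2+2h\Im\la [A_0,{\mathbf W}]v,v\ra$ and then bounds the two diagonal squared norms $\Vert A_1v_1\Vert^2$ and $\Vert A_2v_2\Vert^2$ from below separately. The delicate step there is the second component: since the potential of $A_2$ has no sign, the paper extracts $Ch\Vert h\nabla v_2\Vert^2-(2C^2h^2+C_2h/\sqrt{C})\Vert v_2\Vert^2$ from $\Vert A_2v_2\Vert^2$ by combining $\Vert h^2\Delta v_2\Vert\,\Vert v_2\Vert\geq \Vert h\nabla v_2\Vert^2$ with $\Vert h^2\Delta v_2\Vert\,\Vert v_2\Vert\leq\frac1{Ch}\Vert h^2\Delta v_2\Vert^2+Ch\Vert v_2\Vert^2$, the arbitrarily small loss $h/C$ on $\Vert v_2\Vert^2$ coming from letting this internal large parameter go to infinity before $h\to0$. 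You instead work only with the quadratic forms $\la A_jv_j,v_j\ra$, treat the off-diagonal coupling as a perturbation via $A_jv_j=(Av)_j-h(\cdots)v_{3-j}$, and combine the two resulting first-order energy inequalities with weight $h$ on the second. This is more elementary (no second-order integration by parts on $v_2$ and no commutator $[A_0,{\mathbf W}]$ to estimate), and it makes transparent where the arbitrary $C$ comes from: you dilute the fixed coefficient of $-h\Vert v_2\Vert^2$ by enlarging $C_1$, at the harmless price of weakening the positive terms to $1/C_1$. The only step worth spelling out in your write-up is the passage from $C_5\Vert Av\Vert^2\geq X$ (your \eqref{planIII} rearranged) to $\Vert Av\Vert^2\geq X/C_1$ for $C_1\geq C_5$: since $X$ contains a negative term this is not literally a division, but it follows from $C_1\Vert Av\Vert^2\geq C_5\Vert Av\Vert^2\geq X$, using $\Vert Av\Vert^2\geq0$. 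With that said, both arguments establish exactly the stated estimate, and yours plugs into the subsequent combination with \eqref{Av2} unchanged.
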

 \begin{proof}
 We have, 
  \be
 \begin{aligned}
  \Vert Av\Vert_{{\mathcal N}_\mu}^2=\Vert A_0v\Vert_{{\mathcal N}_\mu}^2+ \Vert h{\mathbf W}v\Vert_{{\mathcal N}_\mu}^2+2h\Im\la [A_0, {\mathbf W}]v, v\ra_{{\mathcal N}_\mu},
 \end {aligned}
 \ee
and thus, setting $A_j:= -h^2\Delta + V_j -\Re\rho -\alpha^2(\nabla V_2)^2$ ($j=1,2$),
 \be
 \label{Av}
 \begin{aligned}
  \Vert Av\Vert_{{\mathcal N}_\mu}^2 \geq  \Vert A_1v_1\Vert_{{\mathcal N}_\mu}^2+\Vert A_2v_2\Vert_{{\mathcal N}_\mu}^2+2h\Im\la [A_0, {\mathbf W}]v, v\ra_{{\mathcal N}_\mu}.
 \end {aligned}
 \ee
Moreover, by an integration by parts, 
 $$
  \begin{aligned}
 \la A_1v_1, v_1\ra_{{\mathcal N}_\mu} =\Vert h\nabla v_1\Vert_{{\mathcal N}_\mu}^2+\la (V_1 -\Re\rho -\alpha^2&(\nabla V_2)^2) v_1, v_1\ra_{{\mathcal N}_\mu}\\
 & +\O (e^{-2(S_0-c_0\mu^{3/2}-\varepsilon)/h}),
  \end {aligned}
 $$
and,  by Assumption 1, one has $\min V_1\left|_{\partial{\mathcal M}}\right. >0$. Therefore, by Cauchy-Schwarz inequality, for $\alpha$ and $h$ sufficiently small, we obtain the existence of a constant $C_1>0$ such that,
$$
\Vert A_1v_1\Vert_{{\mathcal N}_\mu}\cdot \Vert v_1\Vert_{{\mathcal N}_\mu}\geq \Vert h\nabla v_1\Vert_{{\mathcal N}_\mu}^2+\frac1{C_1}\Vert v_1\Vert_{{\mathcal N}_\mu}^2 +\O (e^{-2(S_0-c_0\mu^{3/2}-\varepsilon)/h}).
$$
Since also $\Vert A_1v_1\Vert_{{\mathcal N}_\mu}\cdot \Vert v_1\Vert_{{\mathcal N}_\mu}\leq 2C_1\Vert A_1v_1\Vert_{{\mathcal N}_\mu}^2+\frac1{2C_1}\Vert v_1\Vert_{{\mathcal N}_\mu}^2$,
we deduce,
\be
\label{A1vfin}
\Vert A_1v_1\Vert_{{\mathcal N}_\mu}^2\geq \frac1{2C_1}\Vert h\nabla v_1\Vert_{{\mathcal N}_\mu}^2+\frac1{4C_1^2}\Vert v_1\Vert_{{\mathcal N}_\mu}^2 +\O (e^{-2(S_0-c_0\mu^{3/2}-\varepsilon)/h}).
\ee
On the other hand, setting $\widetilde V_2:= V_2 -\Re\rho -\alpha^2(\nabla V_2)^2$, we have,
\be
\label{A2}
 \begin{aligned}
\Vert A_2v_2\Vert_{{\mathcal N}_\mu}^2 &=\Vert h^2\Delta v_2\Vert_{{\mathcal N}_\mu}^2 + \Vert \widetilde V_2v_2\Vert_{{\mathcal N}_\mu}^2+2\Im \la [-h^2\Delta, \widetilde V_2]v_2,v_2\ra_{{\mathcal N}_\mu}\\
&= \Vert h^2\Delta v_2\Vert_{{\mathcal N}_\mu}^2 + \Vert \widetilde V_2v_2\Vert_{{\mathcal N}_\mu}^2+\O(h\Vert h\nabla v_2\Vert_{{\mathcal N}_\mu} \Vert v_2\Vert_{{\mathcal N}_\mu}+h^2\Vert v_2\Vert_{{\mathcal N}_\mu}^2),
 \end{aligned}
\ee
and, still with an integration by parts and Cauchy-Schwarz inequality,
$$
\Vert h^2\Delta v_2\Vert_{{\mathcal N}_\mu}\Vert  v_2\Vert_{{\mathcal N}_\mu}\geq \la -h^2\Delta v_2,v_2\ra_{{\mathcal N}_\mu}=\Vert  h\nabla v_2\Vert_{{\mathcal N}_\mu}^2+\O (e^{-2(S_0-c_0\mu^{3/2}-\varepsilon)/h}).
$$
Then, writing,
$$
\Vert h^2\Delta v_2\Vert_{{\mathcal N}_\mu}\Vert  v_2\Vert_{{\mathcal N}_\mu}\leq \frac1{Ch}\Vert h^2\Delta v_2\Vert_{{\mathcal N}_\mu}^2 + Ch\Vert  v_2\Vert_{{\mathcal N}_\mu}^2,
$$
where $C\geq 1$ is arbitrary, we deduce,
\be
\label{h2Delta}
\Vert h^2\Delta v_2\Vert_{{\mathcal N}_\mu}^2\geq Ch \Vert h\nabla v_2\Vert_{{\mathcal N}_\mu}^2 -C^2h^2\Vert  v_2\Vert_{{\mathcal N}_\mu}^2+\O (e^{-2(S_0-c_0\mu^{3/2}-\varepsilon)/h}).
\ee
Inserting \eqref{h2Delta} into \eqref{A2}, we obtain the existence of a constant $C_2>0$ such that, for any $C\geq C_2$,
\be
 \begin{aligned}
\Vert A_2v_2\Vert_{{\mathcal N}_\mu}^2 \geq Ch \Vert h\nabla v_2\Vert_{{\mathcal N}_\mu}^2 -2C^2h^2\Vert  v_2\Vert_{{\mathcal N}_\mu}^2- & C_2h\Vert h\nabla v_2\Vert_{{\mathcal N}_\mu} \Vert v_2\Vert_{{\mathcal N}_\mu}\\
& +\O (e^{-2(S_0-c_0\mu^{3/2}-\varepsilon)/h}).
 \end{aligned}
\ee
In particular,
$$
 \begin{aligned}
\Vert A_2v_2\Vert_{{\mathcal N}_\mu}^2 \geq (C-C_2\sqrt{C})h \Vert h\nabla v_2\Vert_{{\mathcal N}_\mu}^2 - & (2C^2h^2 + \frac{C_2h}{\sqrt{C}})\Vert  v_2\Vert_{{\mathcal N}_\mu}^2\\
& +\O (e^{-2(S_0-c_0\mu^{3/2}-\varepsilon)/h}),
 \end{aligned}
$$
and thus, for $C$ sufficiently large,
\be
\label{A2vfin}
 \begin{aligned}
\Vert A_2v_2\Vert_{{\mathcal N}_\mu}^2 \geq \sqrt{C}h \Vert h\nabla v_2\Vert_{{\mathcal N}_\mu}^2 - & (2C^2h^2 + \frac{C_2h}{\sqrt{C}})\Vert  v_2\Vert_{{\mathcal N}_\mu}^2\\
& +\O (e^{-2(S_0-c_0\mu^{3/2}-\varepsilon)/h}).
 \end{aligned}
\ee
Finally, concerning the term $2h\Im\la [A_0, {\mathbf W}]v, v\ra_{{\mathcal N}_\mu}$ appearing in \eqref{Av}, since $A_0$ is scalar, we have,
$$
h\la [A_0, {\mathbf W}]v, v\ra_{{\mathcal N}_\mu}=\O(h^2)(\sum_{|\alpha|\leq 2}|\la  (hD_x)^\alpha v, v\ra_{{\mathcal N}_\mu}|),
$$
and thus, with an integration by parts, and still using \eqref{estbordN1v}-\eqref{estbordN2v},
\be
\label{A0W}
h\la [A_0, {\mathbf W}]v, v\ra_{{\mathcal N}_\mu}=\O(h^2)(\Vert h\nabla v\Vert_{{\mathcal N}_\mu}^2 + \Vert  v\Vert_{{\mathcal N}_\mu}^2)+\O (e^{-2(S_0-c_0\mu^{3/2}-\varepsilon)/h}).
\ee
Inserting \eqref{A1vfin}, \eqref{A2vfin}, and \eqref{A0W} into \eqref{Av}, we obtain the existence of a constant $C_3>0$ such that, for any $C\geq 1$ sufficiently large, and for $h$ sufficiently small (depending on $C$), one has,
$$
\begin{aligned}
 \Vert Av\Vert_{{\mathcal N}_\mu}^2 \geq \frac1{C_3}&\Vert h\nabla v_1\Vert_{{\mathcal N}_\mu}^2+ \frac1{C_3}\Vert  v_1\Vert_{{\mathcal N}_\mu}^2+Ch\Vert h\nabla v_2\Vert_{{\mathcal N}_\mu}^2- \frac{h}{C}\Vert v_2\Vert_{{\mathcal N}_\mu}^2\\
 &-C_3h^2\Vert h\nabla v\Vert_{{\mathcal N}_\mu}^2-C_3h^2\Vert  v\Vert_{{\mathcal N}_\mu}^2-C_3 e^{-2(S_0-c_0\mu^{3/2}-\varepsilon)/h}.
 \end{aligned}
$$
In particular, Lemma \ref{LemmaAv} follows.
\end{proof}

Gathering Lemma \ref{LemmaAv} and \eqref{Av2}, we obtain,
$$
 \frac{h}{C_1}  \Vert h\nabla v\Vert_{{\mathcal N}_\mu}^2+ \frac1{C_1}   \Vert v_1\Vert_{{\mathcal N}_\mu}^2  +h(1-\frac{1}C)\Vert v_2\Vert_{{\mathcal N}_\mu}^2 =\O(h\Vert v_1\Vert^2_ {{\mathcal N}_\mu} + e^{-2(S_0-c_0\mu^{3/2}-\varepsilon)/h}),
$$
where $C\geq 1$ is arbitrarily (and sufficiently) large, and $\varepsilon >0$ is arbitrary. Choosing $C\geq 2$, we conclude that, for any $\varepsilon >0$, for any $\mu >0$ sufficiently small, there exists $h_0=h_0(\varepsilon, \mu)>0$ such that, for $h\in (0,h_0]$, one has,
\be
\Vert h\nabla v\Vert_{{\mathcal N}_\mu}^2+  \Vert v\Vert_{{\mathcal N}_\mu}^2  =\O( e^{-2(S_0-c_0\mu^{3/2}-\varepsilon)/h}).
\ee
In particular,
\be
\label{estnmu/2}
\Vert h\nabla v\Vert_{{\mathcal N}_\frac{\mu}2}^2+  \Vert v\Vert_ {{\mathcal N}_\frac{\mu}2} ^2  =\O( e^{-2(S_0-c_0\mu^{3/2}-\varepsilon)/h}).
\ee
Now, recall from \eqref{defv} that, on ${\mathcal N}_\frac{\mu}2$, on has $|v|\geq e^{\frac12 \alpha\mu /h}|u|$. Therefore, \eqref{estnmu/2} implies,
$$
\Vert h\nabla u\Vert_{{\mathcal N}_\frac{\mu}2}^2+  \Vert u\Vert_ {{\mathcal N}_\frac{\mu}2} ^2  =\O( e^{-(2S_0+\alpha\mu-2c_0\mu^{3/2}-2\varepsilon)/h}).
$$
Here, $\alpha >0$ has been previously fixed sufficiently small, while $\mu$ can still be taken arbitrarily small. Therefore, by choosing $\mu>0$ in such a way that $4c_0\mu^{3/2} \leq \alpha\mu $ (that is $\mu \leq \alpha^2/(16c_0^2)$), and by taking $\varepsilon \leq c_0\mu^{3/2}$, we have proved,
\begin{proposition}\sl 
\label{acrossdM}There exists  $\delta_2>0$ and a neighborhood ${\mathcal N}$ of $\partial{\mathcal M}$, such that,
$$
\Vert u\Vert_{L^2({\mathcal N})}+\Vert h\nabla u\Vert_{L^2({\mathcal N})}=\O (e^{-(S_0+\delta_2)/h}).
$$
\end{proposition}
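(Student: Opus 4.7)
The plan is to combine the two preceding lemmas with the orthogonality identity \eqref{A+iB}, so as to obtain an exponentially small bound on the weighted function $v=e^{\alpha(\mu-V_2)/h}u$ in the full neighborhood ${\mathcal N}_\mu$, and then to cash in the exponential weight on the smaller neighborhood ${\mathcal N}_{\mu/2}$ to produce the strict improvement $\delta_2>0$ over $S_0$.

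First, since $(A+iB)v=0$ and $A$, $B$ are formally selfadjoint, \eqref{A+iB} gives
$$
\Vert Av\Vert_{{\mathcal N}_\mu}^2+\Vert Bv\Vert_{{\mathcal N}_\mu}^2+2\Im\langle Av,Bv\rangle_{{\mathcal N}_\mu}=0.
$$
Inserting the lower bound from Lemma \ref{carleman} into this identity, the term $2\Im\langle Av,Bv\rangle$ yields $h\Vert v_2\Vert^2/C$ up to a harmless $Ch\Vert Av\Vert^2$ (which is absorbed into the left-hand side for $h$ small), a bad $Ch\Vert v_1\Vert^2$, and the boundary remainder, which together produce \eqref{Av2}. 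I would then apply Lemma \ref{LemmaAv} with a constant $C\geq 2$ to re-express $\Vert Av\Vert^2$ from below by $h\Vert h\nabla v\Vert^2/C_1+\Vert v_1\Vert^2/C_1$, paying only a fraction $h\Vert v_2\Vert^2/C$ that is then absorbed by the $h\Vert v_2\Vert^2$ already produced by the Carleman commutator. Chaining the two inequalities kills the bad $\Vert v_1\Vert^2$ (because the ellipticity of $A_1$ built into Lemma \ref{LemmaAv} gives us $\Vert v_1\Vert^2$ with a fixed positive constant, which beats the $Ch\Vert v_1\Vert^2$ appearing in Lemma \ref{carleman} for $h$ small), and yields
$$
\Vert h\nabla v\Vert_{{\mathcal N}_\mu}^2+\Vert v\Vert_{{\mathcal N}_\mu}^2=\O\bigl(e^{-2(S_0-c_0\mu^{3/2}-\varepsilon)/h}\bigr),
$$
for arbitrary $\varepsilon>0$ and $\mu,\alpha$ chosen small enough (independently of $h$).

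Next I would restrict to ${\mathcal N}_{\mu/2}=\{-\mu_0\leq V_2\leq \mu/2\}$, where by construction $e^{\alpha(\mu-V_2)/h}\geq e^{\alpha\mu/(2h)}$, hence $|v|\geq e^{\alpha\mu/(2h)}|u|$ pointwise. Dividing the previous estimate by this factor produces
$$
\Vert h\nabla u\Vert_{{\mathcal N}_{\mu/2}}^2+\Vert u\Vert_{{\mathcal N}_{\mu/2}}^2=\O\bigl(e^{-(2S_0+\alpha\mu-2c_0\mu^{3/2}-2\varepsilon)/h}\bigr).
$$
At this stage $\alpha$ is fixed (small) but $\mu$ is still free, so I would choose $\mu\leq \alpha^2/(16c_0^2)$ so that $2c_0\mu^{3/2}\leq \alpha\mu/2$, and then take $\varepsilon\leq c_0\mu^{3/2}$. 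This makes the exponent $-(2S_0+\alpha\mu/4)/h$ strictly better than $-2S_0/h$, so setting $\delta_2:=\alpha\mu/8$ and taking square roots gives the desired $\O(e^{-(S_0+\delta_2)/h})$ bound on both $\Vert u\Vert$ and $\Vert h\nabla u\Vert$ over the neighborhood ${\mathcal N}:={\mathcal N}_{\mu/2}$ of $\partial{\mathcal M}$.

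The main obstacle in this scheme is not the conclusion itself (which is a bookkeeping of the two lemmas and the boundary data \eqref{estbordN1v}--\eqref{estbordN2v}) but the matching of constants: the coupling ${\mathbf W}$ introduces off-diagonal terms $h b_j$ linking $v_1$ and $v_2$, so the positivity produced by $\nabla V_2\cdot\nabla{\mathbf V}$ (which controls $\Vert v_2\Vert^2$ only) must be complemented by the elliptic positivity of $A_1$ on $V_1>0$ (which controls $\Vert v_1\Vert^2$). One has to check that the bad $\Vert v_1\Vert^2$ term from Lemma \ref{carleman} can be absorbed before one pays it back through Lemma \ref{LemmaAv}; this forces the quantitative choice \eqref{choixconst} relating $\mu_0$, $\alpha$ and $C_0$. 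Once this is arranged, the gain $e^{\alpha\mu/(2h)}$ from the weight dominates the loss $e^{c_0\mu^{3/2}/h}$ coming from the deficit of $\varphi$ on $\Sigma$, provided $\mu$ is taken sufficiently small, which is exactly what delivers $\delta_2>0$.
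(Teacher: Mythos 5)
Your proposal follows the paper's own argument essentially verbatim: combine the identity \eqref{A+iB} with the Carleman estimate of Lemma \ref{carleman} to get \eqref{Av2}, feed in Lemma \ref{LemmaAv} with $C\geq 2$ to absorb the $\O(h)\Vert v_1\Vert^2$ term via the elliptic positivity of $A_1$, and then cash in the weight $e^{\alpha(\mu-V_2)/h}\geq e^{\alpha\mu/2h}$ on ${\mathcal N}_{\mu/2}$ with $\mu\lesssim\alpha^2/c_0^2$ and $\varepsilon\leq c_0\mu^{3/2}$ so the gain $\alpha\mu$ beats the loss $2c_0\mu^{3/2}+2\varepsilon$. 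This is correct and is the same proof as in the paper (up to taking the inequalities on $\mu$ slightly strict so that the net gain $\delta_2$ is genuinely positive).
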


\section{Propagation across the crest $\partial\Omega_0$}

\subsection{Preliminaries}
In this section, we work near the minimal $d$-geodesic $\gamma\in G_1$ given by Assumption 5, and we will travel it, starting from $\partial{\mathcal M}$ up to $\partial U$. We denote by  $x_0\in \partial{\mathcal M}$ its starting point, by $y_0 \in \partial U$ its ending point, and by $x^{(1)}, \dots, x^{(N)}$ the sequence of points that constitute $\gamma\cap\partial\Omega_0$, ordered from the closest to $\partial{\mathcal M}$ up to the closest to $U$. We also denote by $\gamma^{(1)}, \gamma^{(2)}, \dots, \gamma^{(N +1)}$ the portions of $\gamma\backslash \left( \partial\Omega_0 \cup \partial{\mathcal M}\cup \partial U\right)$ that are in-between  $\partial{\mathcal M}$ and $x^{(1)}$, $x^{(1)}$ and $x^{(2)}$, ..., $x^{(N)}$ and $U$, respectively, in such a way that we have,
$$
\gamma = \{ x_0\} \cup\gamma^{(1)} \cup \{x^{(1)}\} \cup \gamma^{(2)}\cup \dots \cup \{x^{(N)}\}\cup \gamma^{(N +1)}\cup \{y_0\},
$$
where the unions are all disjoints.

Since $\gamma$ is a minimal $d$-geodesic,  we know that the function $\varphi$ is analytic in a neighborhood of $\gamma^{(1)}$ (that of course may shrink as we get closer to the end points of $\gamma^{(1)}$), and satisfies $|\nabla\varphi|^2 = V_2$ there. Moreover, by standard arguments of Riemannian geometry, $\gamma^{(1)}$ can be parametrized on some interval $(0, t_1)$ in such a way that, for all $t\in (0,t_1)$, one has,
\be
\label{parammgamma1}
\dot \gamma^{(1)} (t) = -\nabla\varphi \,(\gamma^{(1)}(t)).
\ee
Now, we set,
$$
v:= e^{\varphi /h}u.
$$
By Proposition \ref{acrossdM}, we know that there exists $\delta_2>0$ such that $u=\O(e^{-(S_0+\delta_2)/h})$ near $x_0$ (and analogous estimates for the derivatives of $u$). Hence (since $\varphi (x_0)=S_0$ and $\varphi \leq S_0$ everywhere), we obtain that $v$ is exponentially small near $\gamma^{(1)} (t)$ for all $t$ close enough to 0.

On the other hand, using the equation $Pu=\rho u$,  on a neighborhood of $\gamma^{(1)}$ we obtain,
$$
\left(-h^2\Delta + 2h(\nabla\varphi)\cdot \nabla -|\nabla\varphi|^2 +h(\Delta\varphi)+{\mathbf V}+h{\mathbf W}-\rho \right)v=0,
$$
that we prefer to write as,
$$
\left(\frac{i}2 h^2\Delta + h(\nabla\varphi)\cdot D_x +\frac{i}2 V_2 -\frac{ih}2 (\Delta\varphi)-\frac{i}2{\mathbf V}-\frac{ih}2{\mathbf W}+\frac{i\rho }2 \right)v=0,
$$
that is,
\be
\label{eqw}
Q(x, hD_x; h)v=0,
\ee
where $Q(x, hD_x; h)$ is a semiclassical differential matrix-operator, with diagonal principal symbol $q_0={\rm diag}(q_1,q_2)$ given by,
$$
\begin{aligned}
& q_1(x,\xi)=(\nabla\varphi)\cdot\xi-\frac{i}2\xi^2+\frac{i}2(V_2-V_1)\\
& q_2(x,\xi)=(\nabla\varphi)\cdot\xi-\frac{i}2\xi^2.
\end{aligned}
$$
Since $V_1>V_2$ near $\gamma^{(1)}$, we see that $q_1$ is elliptic there, while, thanks to \eqref{parammgamma1}, the curve $t\mapsto (\gamma^{(1)}(t),0)$ is a (real) integral curve of the Hamilton flow $H_{q_2}=(\nabla_\xi q_2, -\nabla_x q_2)$ of $q_2$. As a consequence, Equation \eqref{eqw} can actually be locally reduced to a scalar one (e.g. by using a parametrix of $Q_1$), and we can apply standard results on the micro-support (see, e.g., \cite[Chapter 4]{Ma2} or \cite[Section 5]{Ma1}). In particular, by Hanges' Theorem of propagation, and since $v$ is exponentially small near $\gamma^{(1)} (t)$ for $t$ small, we deduce (denoting by $MS(v)$ the micro-support of $v$),
\be
\label{propag}
\gamma^{(1)}\times \{0\} \, \cap \, MS(v) =\emptyset.
\ee
But since $q_2^{-1}(0) \subset \{ \xi =0\}$, we also know that $MS(v)\subset \{ \xi =0\}$, and thus, \eqref{propag} implies, 
$$
\gamma^{(1)}\times \R^n \, \cap \, MS(v) =\emptyset.
$$
Still by standard results of analytic microlocal analysis (see, e.g. \cite{Sj, Ma2}), we conclude that $v$ is exponentially small on a whole neighborhood of 
$\gamma^{(1)}$ as $h\to 0_+$, that is, we have proved,
\begin{proposition}\sl
\label{estgamma1}
For any $x\in \gamma^{(1)}$, there exists $\delta =\delta(x)>0$ such that $u e^{\varphi /h}=\O(e^{-\delta /h})$ on a neighborhood of $x$, uniformly for $h>0$ sufficiently small.
\end{proposition}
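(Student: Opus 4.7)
The plan is to deduce exponential smallness of $v=e^{\varphi/h}u$ on a neighborhood of every point of $\gamma^{(1)}$ from its exponential smallness near the starting point $x_0$, by propagating that smallness along the bicharacteristic of $q_2$ through the analytic microlocal machinery already sketched in the excerpt.

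The first step is to verify the initial estimate. By Proposition \ref{acrossdM}, $u=\O(e^{-(S_0+\delta_2)/h})$ on a neighborhood of $\partial{\mathcal M}$, together with the same bound for $h\nabla u$. Since $\varphi(x_0)=S_0$ and $\varphi$ is Lipschitz (with constant bounded by $\sup\sqrt{V_2}$), this yields $v=\O(e^{-\delta_2'/h})$ on a sufficiently small ball around $x_0$, for some $\delta_2'>0$ (the derivatives being controlled by elliptic regularity applied to $Pu=\rho u$). Hence the fiber above $x_0$ is disjoint from $MS(v)$. The second step is the microlocal decoupling: since $V_1>V_2$ on $\gamma^{(1)}\subset\Omega_0$, the symbol $q_1$ is elliptic in a conic neighborhood of $\gamma^{(1)}\times\{0\}$ (its real and imaginary parts cannot vanish simultaneously there). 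Sj\"ostrand's analytic pseudodifferential calculus \cite{Sj} then provides a local analytic parametrix $E_1$ of $Q_1$ in that conic neighborhood. Using the first row of $Qv=0$, one expresses $v_1$ microlocally in terms of $v_2$ modulo exponentially small errors, and substituting into the second row produces a scalar analytic pseudodifferential equation $\widetilde Q v_2 = 0$ with principal symbol $q_2$.

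The third step is to invoke Hanges' theorem of propagation of the analytic microsupport (see, e.g., \cite[Ch.~4]{Ma2}). Thanks to the geodesic parametrization \eqref{parammgamma1}, the curve $t\mapsto(\gamma^{(1)}(t),0)$ is precisely the real integral curve of $H_{q_2}$ issued from $(x_0,0)$; since $(x_0,0)\notin MS(v_2)$ by step one, Hanges' theorem propagates this along the whole curve. Combining with $q_2^{-1}(0)\subset\{\xi=0\}$, which forces $MS(v_2)\subset\{\xi=0\}$, we obtain $\gamma^{(1)}\times\R^n \cap MS(v_2)=\emptyset$. The parametrix relation then transfers the same conclusion to $v_1$. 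By the standard characterization of the analytic microsupport \cite{Sj, Ma2}, this yields $v=\O(e^{-\delta/h})$ on a full spatial neighborhood of each $x\in\gamma^{(1)}$, which is the claim.

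The most delicate point will be the decoupling in step two: one must verify that the analytic parametrix of $Q_1$ acts consistently on the weighted spaces in which $v$ naturally lives (cf.\ the spaces $H_{-\Im\psi-S_0+\varepsilon_1|\Im x|,z_0}$ from Section \ref{Preliminaries}, suitably adapted to the current base point), and that the substitution into the second row does not introduce spurious contributions to the principal symbol that would alter the bicharacteristic geometry. Both checks are routine in Sj\"ostrand's analytic calculus, but they are the only ingredient beyond a direct appeal to existing propagation theorems.
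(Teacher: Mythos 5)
Your proposal follows essentially the same route as the paper: exponential smallness of $v=e^{\varphi/h}u$ near $x_0$ from Proposition \ref{acrossdM}, reduction to a scalar equation via the ellipticity of $q_1$ (which holds because $V_1>V_2$ along $\gamma^{(1)}$), Hanges' propagation along the integral curve of $H_{q_2}$ given by \eqref{parammgamma1}, and the inclusion $q_2^{-1}(0)\subset\{\xi=0\}$ to pass from $MS(v)\cap(\gamma^{(1)}\times\{0\})=\emptyset$ to smallness on a full neighborhood. One small slip: $\gamma^{(1)}$ lies in $\{V_1>V_2\}$, i.e.\ \emph{outside} the cirque $\Omega_0=\{V_1<V_2\}$, not inside it as you write — though the condition $V_1>V_2$ that you actually use for the ellipticity of $q_1$ is the correct one.
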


The next step will consist in crossing $\partial \Omega_0$ at the ending point $x^{(1)}$ of $\gamma^{(1)}$.

By  \cite{Pe}, Theorem 2.14, we know that there exists a neighborhood ${\mathcal V}_1$ of $x^{(1)}$ and two positive real-analytic functions $\varphi_1, \varphi_2$ on ${\mathcal V}_1$, such that,

\be
\label{phi12}
\begin{aligned}
&\varphi_1 = \varphi \mbox{ on } {\mathcal V}_1\cap\{ V_1 <V_2\};\\
&\varphi_2= \varphi \mbox{ on } {\mathcal V}_1\cap\{ V_2 <V_1\};\\
&|\nabla\varphi_j(x)|^2 = V_j(x) \quad (j=1,2);\\
&\varphi_1 =\varphi_2 \mbox{ and } \nabla\varphi_1 =\nabla\varphi_2 \mbox{ on } {\mathcal V}_1\cap\partial\Omega_0;\\
&\varphi_1(x) -\varphi_2(x) \sim d(x, \partial\Omega_0)^2.
\end{aligned}
\ee

Actually,  $\varphi_1$ is nothing but the analytic continuation of $\varphi(x)$ from ${\mathcal V}_1\cap \{V_1 <V_2\}$ obtained by using the  metric $V_1(x)dx^2$ near $x^{(1)}$ (instead of $\min (V_1,V_2)dx^2$), while $\varphi_2$ is the (analytic) phase function of the Lagrangian manifold obtained as the flow-out of $\{(x,\nabla\varphi_1(x))\, ;\, x\in {\mathcal V}_1\cap\partial\Omega_0 \}$ under the Hamilton flow of $q_2(x,\xi):=\xi^2 - V_2(x)$. 

Now, we set,
$$
v:= e^{\varphi_2/h} u,
$$
and
$$
{\mathcal V}_1^\pm := {\mathcal V}_1 \cap \{ \pm V_1 > \pm V_2\},
$$
By \eqref{agmon} and standard Sobolev estimates, we know that, for all $\varepsilon >0$, $w$ is $\O(e^{(\varphi_2 -\varphi +\varepsilon) /h})$ together with all its derivatives. In particular, since, by \eqref{phi12},  $\varphi = \varphi_1 >\varphi_2$ on ${\mathcal V}_1^-$,  we obtain that $v$ is locally exponentially small there. Moreover, by  Proposition \ref{estgamma1}, $w$ is also locally exponentially small in ${\mathcal V}_1^+$ (on which $\varphi =\varphi_2$). Thus,  the microsupport of $v$ satisifies,
\be
\label{msw1}
MS(v)\cap \{ x\in {\mathcal V}_1\} \subset \{ V_1=V_2\}.
\ee

On the other hand, $v$ is solution to $Qv=\rho v$ with,
$$ 
Q:= e^{\varphi_2/h}P e^{-\varphi_2/h},
$$
and the semiclassical principal symbol of $Q$ is given by,
$$
\begin{aligned}
q_0(x,\xi) & = \left(\begin{array}{cc}
(\xi +i\nabla\varphi_2)^2 + V_1 & 0\\
0 & (\xi +i\nabla\varphi_2)^2 + V_2 
\end{array}
\right)\\
& =\left(\begin{array}{cc}
\xi^2 + V_1-V_2 +2i\xi\cdot\nabla\varphi_2 & 0\\
0 & \xi^2+2i\xi\cdot\nabla\varphi_2  
\end{array}
\right).
\end{aligned}
$$
In particular, the characteristic set ${\rm Char}(Q):=(\det q_0)^{-1}(0)$ of $Q$ satisfies,
\be
\label{charQ}
{\rm Char}(Q) \subset \{ \xi =0\} \cup \{\xi^2 =V_2(x)-V_1(x)\},
\ee
and since we know that $MS(v)\subset {\rm Char}(Q)$ (see, e.g., \cite{Ma2}), we conclude from \eqref{msw1}-\eqref{charQ} that we have,
\be
\label{msw}
MS(v)\cap \{ x\in {\mathcal V}_1\} \,\,  \subset\, \, \{V_1(x)=V_2(x)\, , \, \xi =0\}=\partial\Omega_0 \times\{0\}.
\ee

Therefore, in order to prove that $v$ is exponentially small near $x^{(1)}$, it remains to show,
\begin{proposition}\sl One has,
\label{caracdouble}
$$
(x^{(1)},0)\notin MS(v).
$$
\end{proposition}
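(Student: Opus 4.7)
The aim is to rule out $(x^{(1)},0)$ from $MS(v)$, knowing from \eqref{msw} that $MS(v)$ is already confined to $\partial\Omega_0\times\{0\}$ near $x^{(1)}$, and that $v$ is exponentially small on both sides of the crest through $x^{(1)}$ (on ${\mathcal V}_1^-$ because $\varphi_1>\varphi_2$ there, and on ${\mathcal V}_1^+$ by Proposition \ref{estgamma1} since there $\varphi=\varphi_2$). Writing $v=(v_1,v_2)$, the equation $Qv=\rho v$ reads on the diagonal as
\begin{equation*}
(Q_1-\rho)v_1 = -hWv_2, \qquad (Q_2-\rho)v_2 = -hW^{*}v_1,
\end{equation*}
with principal symbols $q_1=\xi^{2}+V_1-V_2+2i\xi\cdot\nabla\varphi_2$ and $q_2=\xi^{2}+2i\xi\cdot\nabla\varphi_2$. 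Since $V_1=V_2$ on $\partial\Omega_0$, both $q_1$ and $q_2$ vanish at $(x^{(1)},0)$, and by Assumption 5 the gradient $\nabla(V_1-V_2)$ is nonzero there, so the two characteristic varieties meet non-tangentially. This is the non-involutive double-characteristic situation advertised in the introduction, complicated by the fact that after conjugation by $e^{\varphi_2/h}$ both symbols are complex-valued.

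The core of the argument is to construct an analytic FBI transform $T$ in the sense of \cite{Sj}, associated with a complex canonical transformation $\kappa$ defined in a complex neighborhood of $(x^{(1)},0)$, chosen so that $q_1\circ\kappa^{-1}$ and $q_2\circ\kappa^{-1}$ take a \emph{simultaneous} normal form as non-vanishing multiples of two independent coordinates $\eta_1$ and $\eta_2$ in new symplectic variables $(y,\eta)$. The possibility of such a $\kappa$ is reasonable because both symbols share the term $2i\xi\cdot\nabla\varphi_2$, so a single complex deformation of the fibre can diagonalize the two linear parts simultaneously; what needs careful checking is the compatibility of $\kappa$ with the analytic weight $-\Im\psi-S_0+\varepsilon_1|\Im x|$ on which $v$ is microlocally controlled through Proposition \ref{Hphi}. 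In the transformed picture the system becomes, up to elliptic factors and exponentially negligible errors,
\begin{equation*}
hD_{y_1}\tilde v_1=-h\widetilde{W}\tilde v_2+O(e^{-\delta/h}),\qquad hD_{y_2}\tilde v_2=-h\widetilde{W}^{*}\tilde v_1+O(e^{-\delta/h}),
\end{equation*}
where the principal symbol of $\widetilde{W}^{*}$ at the origin equals, up to an elliptic factor, $w_0^{*}(x^{(1)},i\nabla\varphi(x^{(1)}))$, which is nonzero by Assumption 6 since the index $j=1$ is odd.

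The propagation is then read off the model. The a priori exponential smallness of $v$ on the two sides of $\partial\Omega_0$ translates into exponential smallness of $\tilde v_j$ on half-neighborhoods $\{y_j<0\}$ of the origin. Using the first equation, $\tilde v_1$ is recovered from $\tilde v_2$ by integration along the bicharacteristics of $hD_{y_1}$, which are lines in the $y_1$ direction; symmetrically $\tilde v_2$ is recovered from $\tilde v_1$ along $y_2$-lines. The ellipticity of the coupling $\widetilde W$ feeds each estimate into the other, closing a bootstrap which forces both $\tilde v_1$ and $\tilde v_2$ to be exponentially small in a full neighborhood of the origin, so that $(0,0)\notin MS(\tilde v)$. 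Pulling back through $T$ yields $(x^{(1)},0)\notin MS(v)$, as required.

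The main obstacle is the construction in the second paragraph: producing a single complex canonical transformation $\kappa$ that simultaneously brings two non-proportional complex principal symbols to a standard non-involutive model at their common zero, and verifying its compatibility with the exponential weight carried by $v$. No real symplectic change of coordinates accomplishes this, and it is precisely here that Sj\"ostrand's complex FBI framework \cite{Sj}, together with the symbolic calculus set up in Section \ref{Preliminaries}, becomes indispensable; the arguments for the bootstrap itself then follow the pattern of scalar non-involutive double-characteristic propagation (e.g.\ \cite{LL, PeSo}).
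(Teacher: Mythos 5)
Your overall philosophy is the right one (an FBI transform in Sj\"ostrand's complex framework, exploitation of the non-involutive structure at the double characteristic, ellipticity of the interaction via Assumption 6), but the central construction you propose cannot exist, and the propagation mechanism you then invoke is not the one that makes the argument work. You ask for a complex canonical transformation $\kappa$ after which $q_1$ and $q_2$ become elliptic multiples of two momentum coordinates $\eta_1,\eta_2$, and you accordingly transform the system into two transport equations $hD_{y_1}\tilde v_1=\cdots$, $hD_{y_2}\tilde v_2=\cdots$. This is the \emph{involutive} normal form: if $q_j\circ\kappa^{-1}=e_j\eta_j$ with $e_j$ elliptic, then at any common zero of $\eta_1$ and $\eta_2$ one has $\{q_1\circ\kappa^{-1},q_2\circ\kappa^{-1}\}=e_1e_2\{\eta_1,\eta_2\}=0$. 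Here, however, a direct computation gives $\{q_1,q_2\}(x^{(1)},0)=-2i\,\nabla(V_1-V_2)(x^{(1)})\cdot\nabla\varphi_2(x^{(1)})\neq 0$ by Assumption 5 (transversality of $\nabla\varphi_2$ to the crest). Since the Poisson bracket is invariant under canonical transformations, no such $\kappa$ exists: the very non-involutivity you correctly identify is what forbids your normal form. The correct reduction pairs a derivative with (essentially) its \emph{conjugate position} variable: the transform $T$ is built through the eikonal system \eqref{eikonFBIzero}--\eqref{eikonFBI} so that $\widetilde Q_2$ becomes $ihD_{z_n}$ while the multiplication operator $\widetilde Q_1-\widetilde Q_2=x_ng(x)$ becomes a pseudodifferential operator $L$ with principal symbol close to $\bigl(z_n+\frac{i}{\mu}\zeta_n\bigr)g$ (Lemma \ref{approxl0}), yielding \eqref{systcarct2transf}.

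Even granting some normal form, your ``bootstrap by integration along bicharacteristics'' does not close: the coupled equations have the shape $hD\tilde v=hW\tilde v+\cdots$, so after dividing by $h$ each integration returns $O(1)$ times the other component, with no gain to iterate on. The mechanism that actually works is the one displayed in the toy model \eqref{toy}: eliminating $\widehat w_1$ by the ellipticity of $\widehat A_2$ (this is where Assumption 6 enters) produces a single second-order equation for $\widehat w_2$, as in \eqref{systcarct2transfbis}, whose symbol --- thanks to the commutator $[hD_{z_n},z_n]=\frac{h}{i}$ and a carefully chosen weight $\Phi_\delta$ with $\mu<g(0)$ (note the sign condition on $g$, which encodes the geometry of the crossing) --- has real part bounded above by $-C^{-1}\bigl(|z_n|^2+h\bigr)$, see \eqref{negativesymbol}. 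The resulting weighted $L^2$ positivity estimate gives $h\Vert\widehat w_2\Vert^2\lesssim h^2\Vert\widehat w_2\Vert^2+e^{-\delta/h}$, the extra power of $h$ on the right coming from the double occurrence of the interaction; this $O(h)$ sub-elliptic gain is what forces $\widehat w_2$, and then $\widehat w_1$, to be exponentially small. Your write-up is missing both the elimination step and this gain, which are the heart of the proof.
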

\begin{remark}\sl This result can be seen as a propagation of the microsupport for a (matrix-) operator with non-involutive double characteristics, and has connections with the results proved in \cite{LL, PeSo}.
\end{remark}

\subsection{Proof of Proposition \ref{caracdouble}}
To begin with, let us write in a more precise way the expression of $Q$. We have,
$$
\begin{aligned}
Q & = \left(\begin{array}{cc}
(hD_x +i\nabla\varphi_2)^2 + V_1 & hr_0(x) + ihr_1(x)\cdot (hD_x+i\nabla\varphi_2)\\
h{\overline r_0}(x) - ih (hD_x+i\nabla\varphi_2)\cdot {\overline r_1}(x) & (hD_x +i\nabla\varphi_2)^2 + V_2
\end{array}
\right),
\end{aligned}
$$
and thus, the equation $Qv=\rho v$ can be written as,
\be
\label{systcarct2}
\left\{
\begin{aligned}
&(Q_1-\rho)v_1 = hA_1v_2\, ;\\
&(Q_2-\rho)v_2 =hA_2 v_1,
\end{aligned}
\right.
\ee
with,
$$
\begin{aligned}
& Q_1:=-h^2\Delta +V_1-V_2 +2i \nabla\varphi_2\cdot hD_x +h\Delta\varphi_2 \, ; \\
& Q_2:=-h^2\Delta  +2i \nabla\varphi_2\cdot hD_x +h\Delta\varphi_2  ,
\end{aligned}
$$
and,
$$
\begin{aligned}
& A_1:=-r_0(x) + ir_1(x)\cdot (hD_x+i\nabla\varphi_2)\, ;\\
& A_2:= -{\overline r_0}(x) + i {\overline r_1}(x)\cdot (hD_x+i\nabla\varphi_2) +h\,{\rm div}\, {\overline r_1}(x).
\end{aligned}
$$
(In particular, by Assumption 6, $A_2$ is elliptic at $(x^{(1)}, 0)$.)

We plan to transform $Q_2-\rho$ into $ihD_{z_n}$ microlocally near $(x^{(1)}, 0)$ by the use of a convenient Fourier-Bros-Iagolnitzer transform in the same spirit as in \cite[Section 7]{Sj}. For this purpose, we first take local (real-analytic) coordinates $x=(x',x_n)\in \R^{n-1}\times \R$ centered at $x^{(1)}$ in such a way that the hypersurface $\partial\Omega_0 =\{ V_1=V_2\}$ becomes $\{ x_n=0\}$ (and $\{V_1 >V_2\}$ becomes $\{ x_n>0\}$). Then, using Assumption 5, we see that $\nabla\varphi_2$ is transversal to $\{ V_1=V_2\}$, and thus (since it also points-out towards $\{V_1 >V_2\}$) another change of analytic coordinates transforms the vector-field $2 \nabla\varphi_2(x)\cdot \nabla_x$ into $\partial_{x_n}$. Therefore, in the new local coordinates (that we still denote by $(x',x_n)$), the operators $Q_1$ and $Q_2$ become,
$$
\begin{aligned}
& Q_1=\Lambda(x,hD_x) +ihD_{x_n} +x_ng(x)  \, ; \\
& Q_2=\Lambda(x,hD_x) +ihD_{x_n},
\end{aligned}
$$
where  $\Lambda(x,hD_x)$ is a second-order semiclassical differential operator with non-negative real-valued principal symbol $\Lambda_0(x,\xi)$ homogeneous with respect to $\xi$, and where the function $g$ is real-analytic and positive at $x=0$.

We first get rid of $\rho$ in the system \eqref{systcarct2}. For this purpose, we consider the function $\sigma =\sigma (x)$ solution to,
\be
\label{elimrho}
\left\{
\begin{aligned}
& i\frac{\partial \sigma}{\partial x_n} +\Lambda_0(x,\nabla_x\sigma  )=\rho\, ;\\
& \sigma \left\vert_{x_n=0} =0,
\right.
\end{aligned}\right.
\ee
and we set,
$$
w := e^{-i\sigma /h} v.
$$
Since $\sigma =\O(\rho)$ and $\rho=\rho(h)\to 0$ as $h\to 0$, for any $\varepsilon >0$ we  still have $w =\O(e^{\varepsilon /h})$, and $w$ is solution   to the system,
\be
\label{systcarct3}
\left\{
\begin{aligned}
&\widetilde Q_1w_1 = h\widetilde A_1w_2\, ;\\
&\widetilde Q_2w_2 =h\widetilde A_2 w_1,
\end{aligned}
\right.
\ee
with,
$$
\begin{aligned}
& \widetilde Q_1:=\widetilde \Lambda(x,hD_x) +ihD_{x_n} +x_ng(x) \, ; \\
& \widetilde Q_2:=\widetilde  \Lambda(x,hD_x) +ihD_{x_n},
\end{aligned}
$$
where $\widetilde \Lambda(x,hD_x)$ is a second-order semiclassical differential operator, with principal symbol,
\be
\label{Lambda0tilde}
\widetilde \Lambda_0 (x,\xi) =\Lambda_0 (x,\xi)+(\nabla_\xi \Lambda_0) (x,\xi)\cdot \nabla_x\sigma (x),
\ee
and with $\widetilde A_2$ elliptic at $(0,0)$.

In particular, we have,
\be
\label{estLambdatilde}
\widetilde \Lambda_0 (x,\xi)=\O(|\xi|^2 + |\rho\xi|))\quad ;\quad \nabla_\xi \widetilde \Lambda_0 (x,\xi)=\O(|\xi|+|\rho|).
\ee
At this point, it may be useful to make a few considerations, in order to explain that the reasons for which the computations below (that may seem somehow obscure) do work, are actually rather natural. A way to understand this consists in considering the  toy-model system,
$$
\left\{
\begin{aligned}
&(ihD_{x_n} + x_n)w_1 = hw_2\, ;\\
&ihD_{x_n}w_2 =h w_1.
\end{aligned}
\right.
$$
Substituting $w_1=iD_{x_n}w_2$ in the first equation, we obtain $Qw_2=0$ where the Weyl-symbol $q$ of $Q$ satisfies,
$$
q(x,\xi;h) = -\xi_n^2 +ix_n\xi_n -\frac{h}2 + \O(h^2).
$$
If in addition we conjugate $Q$ with $e^{-2\delta x_n^2/h}$ with $\delta >0$ small enough, then we obtain a differential operator with principal symbol $\widetilde q$ satisfying,
\be
\label{toy}
\Re \widetilde q(x,\xi;h) \leq -\xi_n^2- \delta x_n^2  -\frac{h}4.
\ee
Therefore, if the solution is exponentially small for $x_n\not =0$, it becomes natural to expect it to be (microlocally) exponentially small at $(0,0)$, too.

In several dimensions and in the analytic setting, however, the technical procedure is more complicated, and in order to eliminate the term $\widetilde \Lambda (x,hD_x)$, it will be necessary to use a Fourier integral operator with complex phase-function.

The microlocal transformation of $\widetilde Q_2$ will also take into account the operator $\widetilde Q_1$, in the sense that we will do it in such a way that the operator of multiplication by $x_ng(x)$  keeps a nice form.
In order to do so,  we consider the holomorphic function $\psi_0=\psi_0(x,z')$, solution to the system,
\be
\label{eikonFBIzero}
\left\{
\begin{aligned}
& \frac{\partial \psi_0}{\partial x_n} +i\widetilde  \Lambda_0(x,\nabla_x\psi_0 )=i\mu x_n\, ;\\
& \psi_0 \left\vert_{x_n=0} = i\mu (z'-x')^2/2.
\right.
\end{aligned}\right.
\ee
(Here, $\mu>0$ is a constant that will be fixed small enough later on.)

Then, we consider the holomorphic function $\psi =\psi (x,z)$ defined on a complex neighborhood of $(0,0)$, solution to the eikonal system,
\be
\label{eikonFBI}
\left\{
\begin{aligned}
&\frac{\partial \psi}{\partial z_n} =-i\widetilde \Lambda_0(x,\nabla_x\psi )- \frac{\partial \psi}{\partial x_n} \, ;\\
& \psi \left\vert_{z_n=0} = \psi_0.
\right.
\end{aligned}\right.
\ee
We prove,
\begin{lemma}\sl
\label{lemmapsimu}
One has,
$$
\begin{aligned}
&\psi (z,x) = i\mu \frac{(z-x)^2}{2} + x_n\alpha (x',z')\\
& \hskip 3cm+\O\left( |(x_n,z_n)|^2(|x'-z'|+|\rho|) + |(x_n,z_n)|^3\right);\\
& \nabla_{z'}\psi (z,x) =i\mu (z'-x') + x_n\nabla_{z'}\alpha (x',z') +\O(|(x_n,z_n)|^2);\\
& \partial_{z_n}\psi (z,x) =i\mu (z_n-x_n)+\O(|z_n|(|x'-z'|+|\rho|) +|(x_n,z_n)|^2).
\end{aligned}
$$
uniformly in a neighborhood of $(0,0)$, where $\alpha=\alpha (x',z')$ is the unique complex solution near $0$ to the equation,
\be
\label{eqalpha}
\alpha + i\Lambda_0(x',0 ; i\mu (x'-z'), \alpha) =0.
\ee
\end{lemma}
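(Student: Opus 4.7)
The plan is to construct $\psi$ by Cauchy--Kowalevski applied successively to the two eikonal equations \eqref{eikonFBIzero} and \eqref{eikonFBI} (both non-characteristic, since the normal derivatives $\partial_{x_n}\psi_0$ and $\partial_{z_n}\psi$ appear alone with coefficient $1$), and then to read off the announced expansion via a Taylor development in $(x_n, z_n)$ at $(0, 0)$, extracting the required coefficients at each order directly from the eikonal equations.

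First I obtain a holomorphic solution $\psi_0(x, z')$ of \eqref{eikonFBIzero}. Writing $\psi_0 = \tfrac{i\mu}{2}(z' - x')^2 + x_n\, a(x', z') + \O(x_n^2)$ and inserting into the eikonal at $x_n = 0$ yields
\[
a + i\widetilde\Lambda_0\bigl(x', 0;\, i\mu(x' - z'), a\bigr) = 0.
\]
Using \eqref{Lambda0tilde} together with the identity $\nabla\sigma(x', 0) = (0, -i\rho) + \O(\rho^2)$ (deduced from \eqref{elimrho} and $\sigma|_{x_n = 0} = 0$), this equation differs from \eqref{eqalpha} by a quantity of size $\O(|\rho|(|x' - z'| + |\rho|))$. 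The implicit function theorem (applicable because the degree-$2$ homogeneity of $\Lambda_0$ gives $\partial_\alpha[\alpha + i\Lambda_0(x', 0; 0, \alpha)]|_{\alpha = 0} = 1$) then produces a unique small holomorphic $\alpha$ satisfying \eqref{eqalpha}, with $\alpha(x', x') = 0$ and $a = \alpha + \O(|\rho|(|x' - z'| + |\rho|))$. The discrepancy $a - \alpha$ is absorbed in the error term of the first asymptotic formula. A second application of Cauchy--Kowalevski to \eqref{eikonFBI} with Cauchy data $\psi|_{z_n = 0} = \psi_0$ produces a unique holomorphic $\psi(x, z)$ near $(0, 0)$.

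Next I compute the relevant Taylor coefficients of $\psi$ at $(x_n, z_n) = (0, 0)$ from the eikonals. Directly, $\psi|_{(0, 0)} = \tfrac{i\mu}{2}(z' - x')^2$ and $\partial_{x_n}\psi|_{(0, 0)} = a(x', z')$. Using \eqref{eikonFBI} and then \eqref{eikonFBIzero},
\[
\partial_{z_n}\psi\big|_{z_n = 0} = -i\widetilde\Lambda_0(x, \nabla_x\psi_0) - \partial_{x_n}\psi_0 = -i\mu x_n,
\]
an exact identity. Differentiating the eikonals once more and evaluating at the origin, using \eqref{estLambdatilde} and the fact that $\nabla_x\psi_0$ vanishes at $(x', z', x_n) = (0, 0, 0)$, one finds
\[
\partial_{x_n}^2 \psi\big|_{(0,0)} = i\mu + \O(|x' - z'| + |\rho|), \quad \partial_{x_n}\partial_{z_n}\psi\big|_{(0,0)} = -i\mu + \O(|x' - z'| + |\rho|),
\]
\[
\partial_{z_n}^2\psi\big|_{(0,0)} = i\mu + \O(|x' - z'| + |\rho|).
\]
The second-order Taylor formula then assembles these into the quadratic contribution $\tfrac{i\mu}{2}(z_n - x_n)^2$ plus an error of size $\O(|(x_n, z_n)|^2(|x' - z'| + |\rho|))$, while the uniform holomorphic bound provided by Cauchy--Kowalevski supplies the third-order remainder $\O(|(x_n, z_n)|^3)$. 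This proves the first formula. The second formula follows by differentiating the first in $z'$, with the error becoming $\O(|(x_n, z_n)|^2)$ once the factor $(|x' - z'| + |\rho|)$ is absorbed by Cauchy's inequalities applied to the analytic remainder. The third formula follows from the exact identity $\partial_{z_n}\psi|_{z_n = 0} = -i\mu x_n$ combined with the expansion of $\partial_{z_n}^2\psi$ via a Taylor development in $z_n$ starting from $z_n = 0$.

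The main technical obstacle lies in the last step, namely the clean identification of the quadratic part $\tfrac{i\mu}{2}(z_n - x_n)^2$, which requires careful chain-rule bookkeeping when differentiating the composed expression $\widetilde\Lambda_0(x, \nabla_x\psi)$. The crucial quantitative tool is \eqref{estLambdatilde}, which, combined with the vanishing of $\Lambda_0$ and $\nabla_\xi\Lambda_0$ at $\xi = 0$ (by homogeneity) and with $\nabla\sigma = \O(|\rho|)$, makes the $\rho$-corrections exactly of the same order as the $|x' - z'|$-corrections, ensuring the announced uniform error estimates.
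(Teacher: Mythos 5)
Your proof is correct and follows essentially the same route as the paper's: solve the two eikonal problems \eqref{eikonFBIzero}--\eqref{eikonFBI}, read off $\partial_{x_n}\psi_0|_{x_n=0}$ from the boundary eikonal, exploit the exact identity $\partial_{z_n}\psi|_{z_n=0}=-i\mu x_n$, and assemble the quadratic part $\tfrac{i\mu}{2}(z_n-x_n)^2$ from the second derivatives, with \eqref{estLambdatilde} and the homogeneity of $\Lambda_0$ controlling the $\rho$- and $|x'-z'|$-corrections. The one point where you diverge is your careful distinction between the coefficient $a$ (defined by the eikonal, hence with $\widetilde\Lambda_0$) and $\alpha$ (defined by \eqref{eqalpha} with $\Lambda_0$): the claim that $x_n(a-\alpha)=\O(|x_n|\,|\rho|(|x'-z'|+|\rho|))$ is absorbed into $\O(|(x_n,z_n)|^2(|x'-z'|+|\rho|)+|(x_n,z_n)|^3)$ does not hold when $|x_n|\ll|\rho|$. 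The paper's proof simply asserts $\partial_{x_n}\psi_0|_{x_n=0}=\alpha$, i.e.\ it reads \eqref{eqalpha} with $\widetilde\Lambda_0$ in place of $\Lambda_0$; with that (clearly intended) reading there is nothing to absorb, and since only the bounds \eqref{propalpha} on $\alpha$ are used afterwards, the discrepancy is harmless downstream --- so this is a mis-statement in \eqref{eqalpha} that your argument uncovered rather than a flaw in your method.
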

\begin{proof} From \eqref{eikonFBIzero} and the definition of $\alpha$, we immediately obtain,
$$
\frac{\partial \psi_0}{\partial x_n}\left\vert_{x_n=0} \right.= \alpha,
$$
and, using \eqref{estLambdatilde}, we see that,
\be
\label{propalpha}
\begin{aligned}
& \alpha = \O(|x'-z'|^2+|\rho (x'-z')|)\, ; \\
&  |\nabla_{x'}\alpha |+|\nabla_{z'} \alpha| = \O(|x'-z'|+|\rho|).
\end{aligned}
\ee

Then, differentiating \eqref{eikonFBIzero} with respect to $x_n$, we deduce,
$$
\frac{\partial^2 \psi_0}{\partial x_n^2}\left\vert_{x_n=0} \right.=i\mu +\O(|x'-z'| +|\rho|).
$$
and thus, by Taylor's formula,
\be
\label{exprpsi0}
\begin{aligned}
& \psi_0(x,z') =i\mu\frac{(z'-x')^2}2 + x_n\alpha(x',z')+i\mu\frac{x_n^2}2+\O( (|x'-z'|+|\rho|)x_n^2+|x_n|^3 );\\
& \nabla_x \psi_0 (x,z') = \O( |x'-z'| + |x_n|).
\end{aligned}
\ee
On the orther hand, using  \eqref{eikonFBI}, we obtain,
$$
 \frac{\partial \psi}{\partial z_n}\left\vert_{z_n=0} \right.= -i\widetilde \Lambda_0(x,\nabla_x\psi_0)-\frac{\partial \psi_0}{\partial x_n}=-i\mu x_n.
$$
In particular, $\nabla_{z'} \partial_{z_n} \psi \left\vert_{z_n=0} \right.=0$, and, differentiating  \eqref{eikonFBI},
$$
\begin{aligned}
 \frac{\partial^2 \psi}{\partial z_n^2}\left\vert_{z_n=0} \right. & = -i(\nabla_\xi \widetilde \Lambda_0)(x,\nabla_x\psi_0)\cdot \nabla_x \frac{\partial\psi}{\partial z_n}\left\vert_{z_n=0} \right. -\partial_{x_n}\frac{\partial \psi_0}{\partial z_n}\left\vert_{z_n=0} \right. ;\\
& =i\mu+ \O(|\nabla_x\psi_0| + |\rho|))=i\mu +\O(|x'-z'|+|x_n|+|\rho|).
\end{aligned}
$$
The result follows by Taylor's formula.
\end{proof}

Now, by \cite[Theorem 9.3]{Sj}, we can find an analytic symbol $a(x,z;h)$ defined in a  complex neighborhood of $(0,0)$, such that,
\be
\label{constsymbTmu}
e^{-i\psi(x,z)/h}\left( ihD_{z_n} - {}^tQ_2(x,hD_x) \right)\left( (a(x,z;h)e^{i\psi(x,z)/h}\right)=\O(e^{-2\delta_0/h}),
\ee
where ${}^tQ_2(x,hD_x) $ stands for the formal transposed of $Q_2$.

Then, fixing a cut-off function $\chi_0=\chi_0(x)$ on a small enough neighborhood of $0$,  we set,
\be
T w (z;h):= \int_{\R^n} e^{i\psi (x,z)/h} a (x,z ;h) \chi_0(x)w(x) dx.
\ee
By Lemma \ref{lemmapsimu} and \eqref{propalpha}, we have,
$$
\nabla_x\psi (0,0) = 0\quad ; \quad \Im \nabla_x^2 \psi (0,0) >0\quad ; \quad \det \nabla_z\nabla_x\psi (0,0) \not= 0,
$$
and thus, by the general theory of \cite{Sj} (and its obvious generalization for the study of the microsupport near the nul section of $T^*\R^n$: see, e.g., \cite{Ma2}), $T$ is a Fourier-Bors-Iagolnitzer transform that can be used to characterize the microsupport of $h$-dependent functions near $(0,0)$: Setting,
$$
\Phi (z):= \sup_{x\in \R^n}(-\Im \psi (x,z)),
$$
and introducing the local (complex) canonical transformation,
$$
\kappa \, :\, (x, -\nabla_x\psi) \mapsto (z,\nabla_z\psi),
$$
then, for any $w=w(x;h)$ (say, in $L^\infty ({\mathcal V}_0)$ where ${\mathcal V}_0$ is a neighborhood of $0$ containing the support of $\chi_0$), such that $\Vert w\Vert_{L^\infty ({\mathcal V}_0)} =\O(e^{\varepsilon /h})$ for all $\varepsilon >0$ and $h$ small enough, and for $(x_0,\xi_0)\in \R^{2n}$ close enough to $(0,0)$, we have the equivalence,
\be
\label{caractMS}
(x_0,\xi_0)\notin MS(w)\, \Leftrightarrow \left\{
 \begin{array}{l}
 \mbox{There exists }\delta >0\, \mbox{ and a complex neighborhood }\\
  \omega_0\, \mbox{ of the $z$-projection of } \kappa (x_0,\xi_0), \mbox{ such that } \\
 \Vert e^{-\Phi /h}T w \Vert_{L^2(\omega_0)}=\O(e^{-\delta)/h}) \mbox{ uniformly for } \\ 
h \mbox{ small enough.} 
\end{array}\right.
\ee
Moreover, $Tw$ belongs to the Sj\"ostrand  space $H_{\Phi, 0}$ of $h$-dependent holomorphic functions $v=v(z;h)$ near $z=0$, such that, for all $\varepsilon >0$, $v=\O(e^{(\Phi +\varepsilon)/h})$ uniformly as $h\to 0_+$. 

Now, by construction (in particular \eqref{constsymbTmu}), near $z=0$ we have,
\be
\label{transfQ2}
T \widetilde Q_2w = ihD_{z_n}T w +\O(e^{(\Phi -\delta_0)/h}).
\ee
On the other hand, by \cite[Proposition 7.4]{Sj}, we also have,
\be
\label{transfxn}
T (x_ng(x)w) =L T w,
\ee
where $L=L(z, hD_z)$ is a pseudodifferential operator in the complex domain (in the sense of \cite{Sj}), with principal symbol $l_0(z,\zeta)$ such that,
\be
\label{b0k}
l_0(\kappa (x,\xi) )= x_ng(x).
\ee
We show,
\begin{lemma}\sl 
\label{approxl0}
One has,
$$
l_0(z,\zeta )= (z_n+ \frac{i}{\mu}  \zeta_n)g(z'+\frac{i}{\mu} \zeta')+ \O\left(|(z_n,\zeta_n)|(|\zeta'|+|\rho|)+|(z_n,\zeta_n)|^2 \right).
$$
\end{lemma}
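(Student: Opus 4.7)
The plan is to exploit the fact that $\kappa$ is the canonical transformation generated by $\psi$, which means that $(z,\zeta) = \kappa(x,-\nabla_x\psi(x,z))$ is equivalent to $\zeta = \nabla_z \psi(x,z)$ (these $n$ equations implicitly determine $z$ in terms of $(x,\xi)$). Combined with \eqref{b0k}, this yields $l_0(z,\zeta) = x_n(z,\zeta)\,g(x(z,\zeta))$, where $x = x(z,\zeta)$ is obtained by inverting $\zeta = \nabla_z\psi(x,z)$ with respect to $x$. Thus the task reduces to computing $x(z,\zeta)$ to sufficient precision and substituting into $x_n g(x)$.

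From Lemma \ref{lemmapsimu} together with the estimates \eqref{propalpha} on $\alpha$, the defining equation $\zeta = \nabla_z\psi(x,z)$ unfolds as
\begin{align*}
\zeta' &= i\mu(z'-x') + x_n\,\nabla_{z'}\alpha(x',z') + \O(|(x_n,z_n)|^2), \\
\zeta_n &= i\mu(z_n - x_n) + \O\bigl(|z_n|(|x'-z'|+|\rho|) + |(x_n,z_n)|^2\bigr).
\end{align*}
To leading order these give $x' = z' + i\zeta'/\mu$ and $x_n = z_n + i\zeta_n/\mu$, so in particular $|x'-z'|$ is of order $|\zeta'|$ and $|x_n|$ of order $|(z_n,\zeta_n)|$ near $(0,0)$. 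Using $\nabla_{z'}\alpha = \O(|x'-z'|+|\rho|)$ from \eqref{propalpha}, the first line becomes $\zeta' = i\mu(z'-x') + \O(|(z_n,\zeta_n)|(|\zeta'|+|\rho|) + |(z_n,\zeta_n)|^2)$, whence the holomorphic implicit function theorem yields
\begin{equation*}
x' = z' + \tfrac{i}{\mu}\zeta' + \O\bigl(|(z_n,\zeta_n)|(|\zeta'|+|\rho|) + |(z_n,\zeta_n)|^2\bigr).
\end{equation*}
Feeding $|x'-z'| = \O(|\zeta'|)$ back into the estimate for $\zeta_n$ yields, analogously,
\begin{equation*}
x_n = z_n + \tfrac{i}{\mu}\zeta_n + \O\bigl(|(z_n,\zeta_n)|(|\zeta'|+|\rho|) + |(z_n,\zeta_n)|^2\bigr).
\end{equation*}

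It then remains to substitute into $x_n g(x)$ and Taylor-expand. Writing $g(x',x_n) = g(x',0) + x_n\tilde g(x',x_n)$ with $\tilde g$ bounded and holomorphic, and then $g(x',0) = g(z'+\tfrac{i}{\mu}\zeta',0) + \O(|x'-(z'+\tfrac{i}{\mu}\zeta')|)$, we get
\begin{equation*}
x_n g(x) = \Bigl(z_n + \tfrac{i}{\mu}\zeta_n\Bigr) g\Bigl(z' + \tfrac{i}{\mu}\zeta'\Bigr) + R,
\end{equation*}
where $R$ collects: the remainder of $x_n$ multiplied by the bounded function $g$, which is $\O(|(z_n,\zeta_n)|(|\zeta'|+|\rho|) + |(z_n,\zeta_n)|^2)$; the term $x_n\cdot \O(|x'-(z'+\tfrac{i}{\mu}\zeta')|) = \O(|(z_n,\zeta_n)|^2(|\zeta'|+|\rho|))$, absorbed into the same remainder; and the term $x_n \cdot x_n\tilde g(x) = \O(|(z_n,\zeta_n)|^2)$. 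This produces precisely the advertised bound.

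The main delicacy is ensuring that the cross terms $x_n\nabla_{z'}\alpha$ in the eikonal expansion of $\psi$, together with the $x_n$-dependence of $g$, both contribute only to the mixed error $|(z_n,\zeta_n)|(|\zeta'|+|\rho|)$ rather than to something larger; this is what dictates the precise form of the remainder in the statement and, in turn, what makes the microlocal argument for Proposition \ref{caracdouble} succeed.
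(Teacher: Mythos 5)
Your proposal is correct and follows essentially the same route as the paper: both invert the relation $\zeta=\nabla_z\psi(x,z)$ using the expansions of Lemma \ref{lemmapsimu} and the bounds \eqref{propalpha} by successive approximation to get $x=z+\tfrac{i}{\mu}\zeta+\O\bigl(|(z_n,\zeta_n)|(|\zeta'|+|\rho|)+|(z_n,\zeta_n)|^2\bigr)$, and then substitute into $l_0(\kappa(x,\xi))=x_ng(x)$. Your final Taylor expansion of $g$ in $x_n$ and in $x'$ is in fact spelled out more explicitly than in the paper, which simply says the result follows upon insertion into \eqref{b0k}.
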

\begin{proof}
By definition, if $(z,\zeta) =\kappa (x,\xi)$, then $\xi = -\nabla_x\psi (x,z)$ and $\zeta = \nabla_z\psi (x,z)$, and thus, by Lemma \ref{lemmapsimu},
$$
\begin{aligned}
& \zeta_n = i\mu (z_n-x_n)+\O\left(|z_n|(|x'-z'|+|\rho|)+|(x_n,z_n)|^2\right);\\
& \zeta' = i\mu (z'-x')+x_n\nabla_{z'}\alpha (x',z')+\O\left(|(x_n,z_n)|^2 \right).
\end{aligned}
$$
In view of \eqref{propalpha}, we first deduce,
$$
\begin{aligned}
& x_n = \O\left(|(z_n,\zeta_n)|\right) ;\\
& x' = \O\left(|(z',\zeta')|+|\rho|\cdot  |(z_n,\zeta_n)|+|(z_n,\zeta_n)|^2\right),
\end{aligned}
$$
then,
$$
\begin{aligned}
& x_n=z_n+\frac{i}{\mu} \zeta_n + \O\left(|z_n|(|x'-z'|+|\rho|)+|(z_n,\zeta_n)|^2 \right);\\
& x' =z'+\frac{i}{\mu} \zeta'+\O(|(z_n,\zeta_n)|(|x'-z'| +|\rho|)+|(z_n,\zeta_n)|^2),
\end{aligned}
$$
and finally,
$$
\begin{aligned}
& x_n=z_n+\frac{i}{\mu} \zeta_n + \O\left(|(z_n,\zeta_n)|(|\zeta'|+|\rho|)+|(z_n,\zeta_n)|^2 \right);\\
& x' =z'+\frac{i}{\mu} \zeta'+\O(|(z_n,\zeta_n)|(|\zeta'| +|\rho|)+|(z_n,\zeta_n)|^2).
\end{aligned}
$$
Inserting this into \eqref{b0k}, the result follows.
\end{proof}

Now, we set, 
$$
\widehat w= (\widehat w_1,\widehat w_2):= T w.
$$
By \eqref{transfQ2} and \eqref{transfxn},
the system \eqref{systcarct3} becomes
\be
\label{systcarct2transf}
\left\{
\begin{aligned}
&(ihD_{z_n}+ L)\widehat w_1 = h\widehat A_1\widehat w_2\, ;\\
&ihD_{z_n}\widehat w_2 =h\widehat A_2\widehat w_1,
\end{aligned}
\right.
\ee
where $\widehat A_1, \widehat A_2$ are 0-th order pseudodifferential operators in the complex domain, with $\widehat A_2$ elliptic at $(0,0)$.

We also need a general result on the pseudodiffential operators in the complex domain. Let $A(z, hD_z)$ be such a operator, acting on some Sj\"ostrand's space $H_{\phi, z_0}$ with $\phi \in C^2$. Then (see \cite[Chapter 4]{Sj}), $A$ can be written as,
$$
Au(z) =\frac1{(2\pi h)^n}\iint_{\Gamma (z)} e^{i(z-y)\zeta /h}a(z,\zeta ;h)u(y;h)dy\, d\zeta,
$$
where $a$ is an analytic symbol defined near $(z_0, \frac2{i}\nabla\phi (z_0))$, and $\Gamma (z)$ is the complex contour of $\C^{2n}$ given by,
\be
\label{Gammaphi}
\Gamma (z)\, :\, \left\{
\begin{aligned}
& \zeta = \frac2{i}\nabla\phi (z) +iR(\overline{z-y})\, ;\\
& y\in \C^n,\,\, |z-y| \leq r,
\end{aligned}
\right.
\ee
where $R>0$ is chosen sufficiently large with respect to the Hessian of $\phi$, and $r>0$ is chosen sufficiently small in such a way that $u$ is well defined in the ball centered at $z_0$ with radius $2Rr$ (in which case the previous formula defines $Au(z)$ for $z$ in the  ball centered at $z_0$ with radius $Rr$).

In all these formulas, $\nabla := \frac12(\nabla_{\Re z} -i \nabla_{\Im z})$ stands for the holomorphic gradient.

We have,
\begin{lemma}\sl
\label{approxpseudo}
$$
Au(z) = a(z,\frac2{i}\nabla\phi (z))u(z) +( \nabla_\zeta a)(z,\frac2{i}\nabla\phi (z))(hD_z - \frac2{i}\nabla\phi (z))u(z)+v(z),
$$
with,
$$
\Vert e^{-\phi /h}v\Vert _{L^2(|z-z_0|<Rr)} \leq Ch\Vert e^{-\phi /h}u\Vert _{L^2(|z-z_0|<2Rr)},
$$
where the positive constant $C$ can be taken independent of $R$ and $r$, as long as $Rr$ remains small enough.
\end{lemma}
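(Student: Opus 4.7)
The plan is to apply Taylor's formula with integral remainder to the symbol $a(z,\zeta;h)$ in the $\zeta$-variable, expanded around $\zeta_0(z):=\frac{2}{i}\nabla\phi(z)$:
$$
a(z,\zeta;h)=a(z,\zeta_0(z);h)+(\nabla_\zeta a)(z,\zeta_0(z);h)\cdot(\zeta-\zeta_0(z))+R_2(z,\zeta;h),
$$
where $R_2$ is analytic in $(z,\zeta)$ and satisfies $|R_2(z,\zeta;h)|\le C|\zeta-\zeta_0(z)|^2$ uniformly on the domain of definition. Inserting this decomposition into the integral representation of $Au$ splits $Au$ into three pieces corresponding, respectively, to the constant, linear, and quadratic Taylor contributions. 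The main goal is then to identify the first two pieces as the two claimed leading terms, and to show that the third is bounded by $h$ in the weighted norm.

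For the constant term, I would first factor $a(z,\zeta_0(z);h)$ outside the $\zeta$--$y$ integral (it does not depend on them), and then invoke the reproducing property on $H_{\phi,z_0}$ (cf.\ \cite[Ch.\ 4]{Sj}) applied to the trivial symbol to conclude that the remaining integral equals $u(z)$. For the linear term, I would use that on $\Gamma(z)$ one has $\zeta-\zeta_0(z)=iR(\overline{z-y})$, and check that
$$
\frac{1}{(2\pi h)^n}\iint_{\Gamma(z)}e^{i(z-y)\zeta/h}(\zeta-\zeta_0(z))u(y)\,dy\,d\zeta=(hD_z-\zeta_0(z))u(z),
$$
again up to an exponentially small error. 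This identity is proved by using the holomorphy of the integrand to deform $\Gamma(z)$ locally to a $z$-independent contour (via Stokes' theorem in $\C^{2n}$), then differentiating under the integral (so that $hD_z$ acting on $e^{i(z-y)\zeta/h}$ brings out the factor $\zeta$), and finally subtracting $\zeta_0(z)u(z)$. Multiplication by the scalar $(\nabla_\zeta a)(z,\zeta_0(z);h)$ yields the claimed linear contribution.

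For the remainder $v(z)$, I would estimate pointwise on $\Gamma(z)$, using $|R_2|\le CR^2|z-y|^2$ together with the standard weight estimate valid when $R$ is chosen large with respect to the Hessian of $\phi$ (which is precisely how $\Gamma(z)$ in \eqref{Gammaphi} is set up):
$$
\Re\bigl(i(z-y)\zeta\bigr)\le \phi(z)-\phi(y)-\tfrac{R}{2}|z-y|^2
\quad\text{on }\Gamma(z).
$$
After integrating in the fibre variable of $\Gamma(z)$, this leaves a Gaussian convolution kernel of the form $h^{-n}R^2|z-y|^2 e^{-R|z-y|^2/(2h)}$ acting on $e^{-\phi/h}u$. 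The extra factor $|z-y|^2$ together with the Gaussian produces a total mass of order $h$ (one additional power of $h$ from the second moment of the Gaussian). An application of Young's inequality for convolutions then yields the bound $\|e^{-\phi/h}v\|_{L^2(|z-z_0|<Rr)}\le Ch\,\|e^{-\phi/h}u\|_{L^2(|z-z_0|<2Rr)}$ with a constant $C$ independent of $R,r$ as long as $Rr$ is small (this last uniformity comes from the fact that only the Gaussian weight, and not $R,r$ themselves, enters the convolution estimate).

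The step I expect to be the main obstacle is the identification of the linear term, because the contour $\Gamma(z)$ depends on $z$ and $\zeta_0(z)$ is $z$-dependent as well, so differentiating under the integral must be justified carefully. The clean way out is to first deform $\Gamma(z)$ locally to a $z$-independent contour using the holomorphy of the integrand and the fact that, by the choice of $R$ in \eqref{Gammaphi}, the resulting boundary terms produced by Stokes' theorem are exponentially small in the weighted norm; after this deformation, the pull-out of $hD_z$ is routine.
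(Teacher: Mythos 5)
Your proposal is correct and follows essentially the same route as the paper: a first-order Taylor expansion of the symbol at $\zeta=\frac2i\nabla\phi(z)$, the observation that on $\Gamma(z)$ one has $\zeta-\frac2i\nabla\phi(z)=iR(\overline{z-y})$ so the remainder is $\O(|z-y|^2)$, identification of the zeroth- and first-order terms via the reproducing property of the contour integral for the symbols $1$ and $\zeta$, and control of the quadratic remainder by the Gaussian decay of the weighted kernel on the good contour together with the second moment $h^{-n}\int|x|^2e^{-c|x|^2/h}L(dx)=\O(h)$. The paper invokes Schur's lemma where you use Young's inequality, which is an immaterial difference.
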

\begin{proof}
For $\zeta = \frac2{i}\nabla\phi (z)+iR(\overline{x-y})$, we write,
$$
\begin{aligned}
a(z,\zeta)& =a(z,\frac2{i}\nabla\phi (z))+iR( \nabla_\zeta a)(z,\frac2{i}\nabla\phi (z))\cdot (\overline{x-y})+\O(|z-y|^2)\\
& = a(z,\frac2{i}\nabla\phi (z)) +( \nabla_\zeta a)(z,\frac2{i}\nabla\phi (z))\cdot (\zeta - \frac2{i}\nabla\phi (z))+\O(|z-y|^2),
\end{aligned}
$$
where the $\O(|z-y|^2)$ only depends on the behaviour of $a$ near $z_0$.
Then, we observe that, since $\Gamma (z)$ is a good contour (in the sense of \cite{Sj}), along $\Gamma (z)$ we have,
$$
\left| e^{\phi(z)/h}e^{i(z-x)\zeta /h}e^{-\phi(y)/h}\right| =\O(e^{-c|z-y|^2/h})
$$
with $c>0$ constant, and the result follows by an application of the Schur Lemma (see, e.g., \cite{Ma2}) and the fact that,
$$
h^{-n}\int_{x\in \C^n}|x|^2 e^{-c|x|^2/h} L(dx) =\O(h).
$$
(Here, $L(dx)$ stands for the Lebesgue measure on $\C^n$.)
\end{proof}

In order to perform exponentiel weighted estimates with $Tw$, we have to specify better the function $\Phi$. We show,
\begin{lemma}\sl
\label{exprPhi}
$$
\Phi (z) = \mu \frac{(\Im z)^2}2 +\O\left(   |\rho|^4+|\Im z'|^4+|z_n|( |\Im z'|^2+|\rho|^2)+|z_n|^3\right).
$$
\end{lemma}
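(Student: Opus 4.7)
The plan is to compute $\Phi(z)=\sup_x(-\Im\psi(x,z))$ by locating the real maximizer perturbatively around $x=\Re z$, using the nondegenerate quadratic structure coming from the Gaussian leading term $i\mu(z-x)^2/2$ in Lemma \ref{lemmapsimu}. Setting $\eta := x - \Re z$ and $\Psi(\eta,z) := -\Im\psi(\Re z + \eta, z)$, the identity $z-\Re z = i\Im z$ gives
\[
-\Im\!\left[\frac{i\mu}{2}(z-x)^2\right] = \frac{\mu}{2}(\Im z)^2 - \frac{\mu}{2}|\eta|^2,
\]
so the leading part of $\Psi$ already has a nondegenerate maximum in $\eta$ at $\eta=0$, with Hessian $-\mu I$ and maximum value $\mu(\Im z)^2/2$.

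First I would evaluate $\Psi(0,z) = -\Im\psi(\Re z, z)$ directly. Using $\alpha(\Re z',z') = \O(|\Im z'|^2+|\rho||\Im z'|)$ from \eqref{propalpha} together with the explicit remainder in Lemma \ref{lemmapsimu}, and applying the AM--GM bounds $|\rho||\Im z'|\le\tfrac12(|\rho|^2+|\Im z'|^2)$, $|z_n|^2|\Im z'|\le |z_n|^3+|z_n||\Im z'|^2$ and $|z_n|^2|\rho|\le |z_n|^3+|z_n||\rho|^2$, one obtains
\[
\Psi(0,z) = \frac{\mu}{2}(\Im z)^2 + \O\bigl(|z_n|(|\Im z'|^2+|\rho|^2)+|z_n|^3\bigr),
\]
which is already the required lower bound for $\Phi(z)$. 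To upgrade this to the claimed equality I would locate the true maximizer $\eta_*(z)$ via the implicit function theorem, applicable because the dominating Hessian is $-\mu I$. Differentiating the expansion of Lemma \ref{lemmapsimu} and using \eqref{propalpha}, the critical point equation $\nabla_\eta\Psi(\eta_*,z)=0$ yields, to leading order,
\[
\eta_{*,n} = \O\!\left(\frac{|\Im z'|^2+|\rho|^2}{\mu}\right),\qquad \eta_*' = \O\!\left(\frac{|z_n|(|\Im z'|+|\rho|)}{\mu}\right).
\]

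Finally, a second-order Taylor expansion of $\Psi$ in $\eta$ around $\eta=0$, combined with $\nabla_\eta\Psi(\eta_*,z)=0$, gives
\[
\Phi(z) - \Psi(0,z) = -\tfrac12\,\eta_*^{T} H(z)\eta_* + \O(|\eta_*|^3) = \tfrac{\mu}{2}|\eta_*|^2 + \O(|\eta_*|^3),
\]
where $H(z)=-\mu I + \O(\mathrm{small})$. Squaring the bounds on $\eta_*$ gives $|\eta_{*,n}|^2 = \O(|\Im z'|^4+|\rho|^4)$ and $|\eta_*'|^2=\O(|z_n|^2(|\Im z'|^2+|\rho|^2))$, the latter absorbed via one more AM--GM ($|z_n|^2 t \le \tfrac12(|z_n|^4 + t^2)$ with $t=|\Im z'|^2+|\rho|^2$) into $\O(|z_n|^3+|\Im z'|^4+|\rho|^4)$. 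Adding this correction to $\Psi(0,z)$ produces precisely the advertised expansion. The main subtlety is just bookkeeping, making sure every mixed power of $|z_n|,|\Im z'|,|\rho|$ is absorbed into one of the four allowed terms; the analytic existence of $\eta_*(z)$ is routine since the unperturbed problem has a nondegenerate quadratic maximum.
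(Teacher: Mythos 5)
Your proposal is correct and follows essentially the same route as the paper: both locate the real critical point of $x\mapsto -\Im\psi(x,z)$ perturbatively from the expansions of Lemma \ref{lemmapsimu} and \eqref{propalpha}, and then absorb the mixed powers of $|z_n|$, $|\Im z'|$, $|\rho|$ by Cauchy--Schwarz/AM--GM. The only (harmless) difference is organizational --- you evaluate at $x=\Re z$ and add a second-order correction $\tfrac{\mu}{2}|\eta_*|^2+\O(|\eta_*|^3)$, while the paper substitutes $x_c(z)$ directly into the expansion of $\psi$; your leading-order bounds on $\eta_*$ omit the $\O(|z_n|^2)$ contributions present in the paper's estimates for $x_c-\Re z$, but these are absorbed after squaring, so nothing is lost.
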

\begin{proof}
By definition, $\Phi (z) =-\Im \psi (x_c(z), z)$, where $x_c(z)$ is the (real) critical point of $x\mapsto -\Im \psi (x,z)$. By computations similar to those of Lemma \ref{lemmapsimu}, we see,
$$
\begin{aligned}
& \nabla_{x'}\psi (x,z) = i\mu (x'-z') + x_n \nabla_{x'}\alpha (x',z') + \O(|(x_n,z_n)|^2);\\
& \partial_{x_n}\psi (x,z) =\alpha (x',z') +i\mu (x_n-z_n) + \O(|(x_n,z_n)|^2).
\end{aligned}
$$
Hence, using \eqref{propalpha}, the critical point $x_c(z)=(x_c', x_n^c)$ of $\Im \psi$ satisfies,
$$
\begin{aligned}
& x_c' -\Re z'=\O\left(|x_n^c|(|x_c' - z'| +|\rho|)+ |(x_n^c,z_n)|^2\right);\\
& x_n^c-\Re z_n = \O\left(|x'_c-z'|(|x_c' - z'| +|\rho|)+|(x_n^c,z_n)|^2\right).
\end{aligned}
$$
We first deduce,
$$
\begin{aligned}
& x_c' -\Re z'=\O\left(|x_n^c|(|\Im z'| +|\rho|)+ |(x_n^c,z_n)|^2\right);\\
& x_n^c = \O\left(|z_n|+|x'_c-z'|(|x_c' - z'| +|\rho|)\right).
\end{aligned}
$$
and then,
\be
\label{xc-Rez}
\begin{aligned}
& x_c' -\Re z'=\O\left(|z_n|(|\Im z'| +|\rho|)+|\Im z'|(|\Im z'|^2 +|\rho|^2)+ |z_n|^2\right);\\
& x_c' - z'=\O\left(|\Im z'| +|\rho z_n|+ |z_n|^2\right);\\
& x_n^c-\Re z_n = \O\left(|\Im z'|^2+|\rho \Im z'|+|\rho|^2|z_n| + |z_n|^2\right);\\
& x_n^c = \O\left(|z_n| +|\Im z'|^2+|\rho \Im z'|\right).
\end{aligned}
\ee
By Lemma \ref{lemmapsimu}, we also have,
$$
\begin{aligned}
\Phi (z) = \frac{\mu}2 (\Im z)^2 & + \O( |x_c-\Re z|^2 + |x_n^c\alpha (x'_c,z')| )\\
& + \O(  |(x_n^c,z_n)|^2(|x_c'-z'|+|\rho|) + |(x_n^c,z_n)|^3)
\end{aligned}
$$
and thus, using \eqref{xc-Rez} and \eqref{propalpha}, we find,
$$
\begin{aligned}
\Phi (z) =  \mu \frac{(\Im z)^2}2 & +\O\left( |\Im z'|^4 + |\rho|^2|\Im z'|^2+|\Im z'|^3|\rho| \right)\\
& +\O\left(|z_n|(|\Im z'|^2 +|\rho\Im z'|)+ |z_n|^2(|\Im z'| +|\rho|)+ |z_n|^3\right),
\end{aligned}
$$
and the result follows by Cauchy-Schwarz inequality.
\end{proof}
In particular, in a small enough neighborhood of $0$, we deduce from Lemma \ref{exprPhi},
$$
\Phi (z)\leq \mu \frac{(\Im z)^2}2+\frac12|\Im z'|^2 + C|z_n|^3+|\rho|^2,
$$
where $C>0$ is a constant. For $\delta >0$ small enough, we set,
\be
\label{defPhidelta}
\Phi_\delta (z):= \mu \frac{(\Im z_n)^2}2+2| z'|^2 + C|z_n|^3+|\rho|^2 -\delta (\Re z_n)^2.
\ee
In particular $\Phi_\delta$ is $C^2$, and if $\mu$ has been chosen $\leq 1$, we have,
\be
\label{uppbdPhi}
\Phi(z) \leq \Phi_\delta(z) - | z'|^2 +\delta (\Re z_n)^2.
\ee
Now, we fix $\delta_0>0$ sufficiently small, and we set,
\be
\label{defOmega}
\Omega := \{ z\in \C^n\, ;\, |z_n| < \delta_0,\, | z'| < \delta_0\},
\ee
and we observe,

- \underline{On $\{ |z_n| =\delta_0\}\cap \Omega$}: Analysing $\kappa^{-1} (z,\zeta)$, we see that these points correspond to $(x,\xi)$ such that $|(x_n,\xi_n)|\geq \delta_0/2$, and thus, by \eqref{msw}, there exists $\delta_1>0$ such that, for such points,
$$
\widehat w(z;h) =\O(e^{(\Phi (z) -2\delta_1)/h}).
$$
Therefore, by \eqref{uppbdPhi},
$$
\widehat w(z;h) =\O(e^{(\Phi_\delta (z) -2\delta_1+\delta\delta_0^2)/h}),
$$
and thus, if we choose $\delta \leq \delta_1\delta_0^{-2}$,
\be
\widehat w(z;h) =\O(e^{(\Phi_\delta (z) -\delta_1)/h}).
\ee

- \underline{On $\{ | z'| =\delta_0\}\cap \Omega$}: For any $\varepsilon >0$, we have,
$$
\widehat w(z;h) =\O(e^{(\Phi (z) +\varepsilon)/h}).
$$
Hence, by \eqref{uppbdPhi}, and taking $\varepsilon$ sufficiently small (and $\delta < 1/2$), at these points we obtain,
$$
\widehat w(z;h) =\O(e^{(\Phi_\delta (z) +\varepsilon - \delta_0^2 +\delta\delta_0^2)/h}) =\O(e^{\Phi_\delta (z) -\frac12 \delta_0^2)/h}).
$$
Summing up, we have proved the existence of a constant $\delta_2>0$ such that,
\be
\label{wbordOmega}
e^{-\Phi_\delta /h}\widehat w =\O(e^{-\delta_2/h})\,\, \mbox{ on } \partial\Omega.
\ee
Now, the idea is to perform estimates on $\widehat w$  in the space, 
$$
L^2_{\Phi_\delta}(\Omega):=L^2(\Omega\, ;\, e^{-2\Phi_\delta (z)/h}L(dz)),
$$
 where $L(dz)$ stands for the Lebesgue measure on $\C^n$. Before that, we go back to \eqref{systcarct2transf}, and, taking advantage of the ellipticity of 
$\widehat A_2$, we re-write it as,
\be
\label{systcarct2transfbis}
\left\{
\begin{aligned}
& \widehat w_1=\frac{1}{h}\widehat B_2 \,ihD_{z_n}\widehat w_2 \, ;\\
&\widehat A_2(-hD_{z_n}+i L)\widehat B_2\,  hD_{z_n}\widehat w_2 = h^2\widehat A_2\widehat A_1\widehat w_2,
\end{aligned}
\right.
\ee
where $\widehat B_2$ is a parametrix of $\widehat A_2$ near $(0,0)$, and where the identities hold in $H_{\Phi, 0}$.

We set,
\be
\widehat Q:=\widehat A_2(-hD_{z_n}+iL)\widehat B_2\, hD_{z_n},
\ee
and we plan to investigate the quantity,
\be
\label{defJ}
J:= \la \widehat Q\widehat w_2,\widehat w_2\ra_{L^2_{\Phi_\delta} (\Omega)} = \int_\Omega e^{-2\Phi_\delta (z)/h}Q\widehat w_2(z)\, \overline{\widehat w_2(z)} L(dz).
\ee
At first, we write,
$$
\widehat Q=(-hD_{z_n}+iL)\, hD_{z_n}+[\widehat A_2, -hD_{z_n}+iL]\widehat B_2\, hD_{z_n},
$$
and we approximate the pseudodifferential operators $[\widehat A_2, -hD_{z_n}+iL]\widehat B_2$ and $L$ by using Lemma \ref{approxpseudo} with $\phi =\Phi_\delta$. We obtain,
$$
\widehat Q\widehat w_2 =  \widehat Q_0\widehat w_2  + v,
$$
where $\widehat Q_0$  is the differential operator,
\be
 \widehat Q_0:= \left(-hD_{z_n} +i l_0 (z, \frac2{i}\nabla\Phi_{\delta})+ i(\nabla_\zeta l_0)(z, \frac2{i}\nabla\Phi_{\delta})(hD_z -\frac2{i}\nabla\Phi_{\delta})\right)hD_{z_n},
\ee
and with $v$ satisfying,
$$
\Vert e^{-\Phi_\delta /h}v\Vert_{L^2(\Omega)} \leq C_0h\Vert e^{-\Phi_\delta /h} hD_{z_n}\widehat w_2\Vert_{L^2(\Omega')}
$$
where $\Omega' $ is an arbitrarily small neighborhood of $\overline \Omega$, and where $C_0>0$ does not depend on the (sufficiently small) size of $\Omega$. In the same way as for \eqref{wbordOmega}, we see that,
$$
\Vert e^{-\Phi_\delta /h} hD_{z_n}\widehat w_2\Vert_{L^2(\Omega'\backslash \Omega)}=\O(e^{-\delta_2 /h}),
$$
and thus, we actually have,
\be
\label{estresteQ}
\Vert e^{-\Phi_\delta /h}v\Vert_{L^2(\Omega)} \leq C_0h\Vert e^{-\Phi_\delta /h} hD_{z_n}\widehat w_2\Vert_{L^2(\Omega)}+Ce^{-\delta_2 /h},
\ee
where  $C>0$ and $\delta_2$ may depend on the size of $\Omega$, but not $C_0$.
Therefore, we can write,
\be
\label{defJ}
J = \la \widehat Q_0\widehat w_2,\widehat w_2\ra_{L^2_{\Phi_\delta} (\Omega)} +J_1,
\ee
with,
\be
\label{estJ1.1}
|J_1| \leq \left(C_0h\Vert e^{-\Phi_\delta /h} hD_{z_n}\widehat w_2\Vert_{L^2(\Omega)}+Ce^{-\delta_2 /h}\right) \Vert e^{-\Phi_\delta /h} \widehat w_2\Vert_{L^2(\Omega)}.
\ee
We first study $J_0:= \la \widehat Q_0\widehat w_2,\widehat w_2\ra_{L^2_{\Phi_\delta} (\Omega)}$. Since $\overline{\widehat w_2(z)}$ is anti-holomorphic, we have $hD_z \overline{\widehat w_2(z)}=0$ and thus, an integration by parts leads us to,
\be
\label{defJ0}
J_0 = \int_{\Omega} {}^t \widehat Q_0 (e^{-2\Phi_\delta (z)/h}) \widehat w_2(z)\overline{\widehat w_2(z)}L(dz) + \O (e^{-\delta_2/h}),
\ee
where ${}^t \widehat Q_0$ is the formal adjoint of $\widehat Q_0$, and is given by,
$$
{}^t \widehat Q_0= -hD_{z_n}\left(hD_{z_n}+il_1(z)-i(hD_z+\frac2{i}\nabla\Phi_{\delta})\cdot l_2(z) \right),
$$
where we have set,
$$
l_1(z):= l_0 (z, \frac2{i}\nabla\Phi_{\delta})\quad ; \quad l_2(z):=(\nabla_\zeta l_0)(z, \frac2{i}\nabla\Phi_{\delta}).
$$
Using that $hD_z(e^{-2\Phi_\delta (z)/h}) = -\frac2{i}\nabla\Phi_{\delta}$,  and setting,
$$
l_3(z):= \sum_{j=1}^n i\frac{\partial}{\partial z_j}\left(\frac{\partial l_0}{\partial\zeta_j}(z, \frac2{i}\nabla\Phi_{\delta})\right),
$$
we find,
$$
\begin{aligned}
 {}^t \widehat Q_0 (e^{-2\Phi_\delta (z)/h})& =-hD_{z_n}\left( (-\frac2{i}\partial_{z_n}\Phi_{\delta}+il_1(z)+ihl_3(z))e^{-2\Phi_\delta (z)/h}\right) \\
 &=\left(4(\partial_{z_n}\Phi_{\delta})^2  +2l_1(z)\partial_{z_n}\Phi_{\delta}+hl_4(z)
 \right)e^{-2\Phi_\delta (z)/h},
 \end{aligned}
$$
with,
$$
l_4:=-2\partial_{z_n}^2\Phi_{\delta}  -\partial_{z_n}l_1 +2l_3\partial_{z_n}\Phi_{\delta}-h\partial_{z_n}l_3.
$$
Then, going back to \eqref{defPhidelta}, we compute,
\be
\label{dznPhi}
\begin{aligned}
& \partial_{z_n}\Phi_{\delta} =-\delta \Re z_n-\frac{i\mu}2 \Im z_n+\O(|z_n|^2);\\
& \partial_{z_n}^2\Phi_{\delta} =-\frac{\delta}2-\frac{\mu}4+\O(|z_n|);\\
& \nabla_{z'}\Phi_\delta =4\overline{z'},
 \end{aligned}
\ee
and thus, by Lemma \ref{approxl0},
$$
\begin{aligned}
& l_1(z) =(1-\delta)g(0)\Re z_n + \O\left(|z_n|(|z'|+|\rho|) + |z_n|^2\right);\\
& l_4(z) =\delta +\frac{\mu}2 -\frac{(1-\delta)g(0)}2 +\O(|z|+|\rho|+h).
\end{aligned}
$$
Therefore,
\be
\label{resymbole}
\begin{aligned}
& \Re\left( 4(\partial_{z_n}\Phi_{\delta})^2  +2l_1(z)\partial_{z_n}\Phi_{\delta}+hl_4(z)\right)\\
& \hskip 1cm =-\mu^2|\Im z_n|^2-2\delta (1-3\delta)g(0)|\Re z_n|^2-\frac{(1-\delta)g(0)-\mu-2\delta}2 h\\
& \hskip 3cm +\O\left (|z_n|^3+|z_n|^2(|z'|+|\rho|)+h(|z|+|\rho|+h)\right).
\end{aligned}
\ee
Now, we see on \eqref{resymbole} that if we choose $\mu <g(0)$, $\delta$ small enough, and the diameter of $\Omega$ small enough, then there exists a constant $C_1>0$ (independent of the diameter of $\Omega$) such that, on $\Omega$,
\be
\label{negativesymbol}
\Re\left( 4(\partial_{z_n}\Phi_{\delta})^2  +2l_1(z)\partial_{z_n}\Phi_{\delta}+hl_4(z)\right)\leq -\frac1{C_1}(|z_n|^2 + h).
\ee
Of course, this estimate is the analogue of \eqref{toy}, obtained for the toy-model.

Going back to \eqref{defJ}, we deduce frorn \eqref{negativesymbol},
\be
\label{estJ0}
\Re J_0 \leq -\frac{h}{C_1}\Vert \widehat w_2\Vert_{L^2_{\Phi_\delta} (\Omega)}^2 + Ce^{-\delta_2/h}.
\ee

Now, concerning $J_1$, we write,
$$
\begin{aligned}
\Vert  hD_{z_n}\widehat w_2\Vert_{L^2_{\Phi_\delta} (\Omega)}^2 & =\int_\Omega  -hD_{z_n}(e^{-2\Phi_\delta /h})\, \widehat w_2\, \overline{hD_{z_n}\widehat w_2} \, L(dz) + \O(e^{-\delta_2 /h})\\
&\leq 2\Vert  (\partial_{z_n}\Phi_\delta)\widehat w_2\Vert_{L^2_{\Phi_\delta}(\Omega)}\Vert  hD_{z_n}\widehat w_2\Vert_{L^2_{\Phi_\delta} (\Omega)} + Ce^{-\delta_2 /h},
\end{aligned}
$$
and therefore, by \eqref{dznPhi},
$$
\Vert  hD_{z_n}\widehat w_2\Vert_{L^2_{\Phi_\delta} (\Omega)}\leq C_2\Vert  z_n\widehat w_2\Vert_{L^2_{\Phi_\delta} (\Omega)}+ C'e^{-\delta_2' /h},
$$
with $C_2$, $C'$, $\delta_2' $ positive, and $C_2$ independent of the (sufficiently small) size of $\Omega$.

Inserting into \eqref{estJ1.1}, and recalling \eqref{defOmega}, we deduce,
\be
\label{estJ1.2}
|J_1| \leq C_3\delta_0 h\Vert \widehat w_2\Vert_{L^2_{\Phi_\delta} (\Omega)}^2 + C''e^{-\delta_2''/h}
\ee
with $C_3$, $C''$, $\delta_2'' $ positive, and $C_3$ independent of $\delta_0$.
Putting together \eqref{estJ0} and \eqref{estJ1.2}, and  choosing $\delta_0$ sufficiently small, we finally obtain the existence of a (new) constant $C>0$ such that 
\be
\la \widehat Q\widehat w_2,\widehat w_2\ra_{L^2_{\Phi_\delta} (\Omega)}\leq -\frac{h}{C}\Vert \widehat w_2\Vert_{L^2_{\Phi_\delta} (\Omega)}^2 + Ce^{-\delta_2/h}.
\ee
Finally, going back to \eqref{systcarct2transfbis}, we deduce,
$$
h\Vert \widehat w_2\Vert_{L^2_{\Phi_\delta} (\Omega)}^2=\O(h^2)\Vert \widehat w_2\Vert_{L^2_{\Phi_\delta} (\Omega)}^2+\O(e^{-\delta' /h})
$$
with $\delta'>0$ constant, and thus, for $h$ sufficiently small,
$$
\Vert \widehat w_2\Vert_{L^2_{\Phi_\delta} (\Omega)}^2=\O(e^{-\delta' /h}).
$$
Since $\Phi_\delta (0) =|\rho (h)|^2 \to 0$ as $h\to 0$, by taking $\Omega'\subset \Omega$ a sufficiently small complex neighborhood of 0, we obtain
$$
\Vert \widehat w_2\Vert_{L^2 (\Omega')}=\O(e^{-\delta' /4h}),
$$
and by \eqref{caractMS} (and the fact that $\Phi (0)\leq|\rho|^2$), this implies that $(0,0)\notin MS(w_2)$. Using the first equation of \eqref{systcarct2transfbis}, we also deduce that $(0,0)\notin MS(w_1)$, and thus, finally, that $(0,0)\notin MS(w)$. Hence, Proposition \ref{caracdouble} is proved.

\section{Propagation up to the well $U$}

Thanks to Proposition \ref{caracdouble}, we know that $e^{\varphi_2/h}u$ is exponentially small near $x^{(1)}$, and since $\varphi_2(x^{(1)})=\varphi_1(x^{(1)})=\varphi (x^{(1)})$, we conclude that the same is true for $e^{\varphi/h}u$. Then, proceeding as for Proposition \ref{estgamma1}, this information can be propagated along $\gamma^{(2)}$, up to a small neighborhood of $x^{(2)}$. But in $x^{(2)}$, the situation is completely similar to that in $x^{(1)}$, with the only difference that  the roles of the two components $u_1$ and $u_2$ are exchanges. But thanks to Assumption 6, 
the arguments of the previous section can be first repeated identically, and we conclude that $e^{\varphi/h}u$ is exponentially small near $x^{(2)}$. Iterating $N$ times this procedure, we arrive to the existence of $\delta >0$ such that,
\be
e^{\varphi/h}u =\O(e^{-\delta /h})\, \mbox{ uniformly near } x^{(N)}.
\ee
Again, we can propagate this information along $\gamma^{(N+1)}$ up to a small neighborhood of $y_0\in \partial U$, and it remains to show that, in that case, $u$ is exponentially small near $y_0$ (since $\gamma \in G_1$, this will be in contradiction with Assumption 4, and the proof of Theorem \ref{mainth} will be completed).

We adopt the strategy used in \cite[Theorem 1.1]{Ma2} (see also \cite[Section 7]{DaMa}), with some modifications due to the fact that we deal with a matrix operator, and with some technical improvements (where it has been possible) and corrections (where it has been necessary).

At first, proceeding as in \cite[Section 3]{Ma2}, we slightly modify the  definition of all the geometric quantities by changing the potentials $V_1$ and $V_2$ into $V_1-\Re\rho$ and $V_2-\Re\rho$. Since $\Re\rho \to 0$ as $h\to 0$, this does not modify the geometric situation (see \cite[End of Section 1]{Ma2}), and we still denote by $U$, $d$, $\gamma$, $y_0$ the corresponding quantities after this change. From now on, the quantity $E:=\Re \rho$ is considered as an extra small parameter, with respect to which all the estimates will be uniform.

We also fix $y_1, z_1\in \gamma^{(N+1)}$ such that $d(y_1,y_0)<d(z_1,y_0)$ and, for $(y,z)$ sufficiently close to $(y_1, z_1)$, we consider the function,
$$
F(y,z):= d(y,z)-d(z,U).
$$
Then $F$ is analytic, and it is solution to,
$$
(\nabla_y F(y,z))^2 = V_1(y)-E.
$$
Moreover, $F$ can be analytically continued with respect to $y$ along $\gamma$ in the direction of $y_0$. At $y_0$, the map $y\mapsto F(z,y)$ develops a singularity, typically in ${\rm dist} (y,{\mathcal C}_z)^{3/2}$, where ``${\rm dist}$'' stands for the Euclidean distance, and ${\mathcal C}_z$ (the caustic set) is an analytic hypersurface tangent to $\partial U$ at some point $y_0(z)$. However, using a technique form \cite[Section 10]{HeSj2}, it is possible to go round this singularity in the complex domain and to extend analytically $F(y,z)$ up to $\{ V_1(y) < E \}$, avoiding ${\mathcal C}_z$. Depending on the way we turn around ${\mathcal C}_z$, we obtain two possible (complex-valued) extensions $F_\pm (y,z)$, that satisfy,
\be
\label{eikonFpm}
\begin{aligned}
& (\nabla_y F_\pm (y,z))^2 = V_1(y)-E ;\\
& F_-(y,z) = \overline{F_+}(y,z)\quad ;\quad \Im F_+(y,z) \sim {\rm dist}(y, {\mathcal C}_z))^{\frac32}.
\end{aligned}
\ee

Furthermore, ${\mathcal C}_z$ has a contact of order exactly 2 with $\partial U$, at the unique point $y_0(y)$ that is connected to $z$  through a minimal geodesic $\gamma_z \subset \R^n\backslash U$ (see \cite[Lemmas 3.1 and 3.2]{Ma2}). Thanks to this, if we fix some small real-analytic hypersurface $\Gamma \subset \overset{\circ}U$ transversal to the $x$-projection  of the bicharacteristic $\bigcup_{t\in \R} \exp tH_{p_1} (y_0,0)$, together with 
a real-analytic hypersurface $\Sigma \subset \R^n\backslash U$ transversal to $\gamma$ at $z_1$, then, one can prove the existence of a constant $C>0$ such that, for all $(y,z)\in \Gamma\times \Sigma$,
\be
\label{HessReF}
\Re F_\pm (y,z) \geq \frac1{C}|y-w(z)|^2,
\ee
where $w(z)$ is the point of intersection between $\Gamma$ and the $x$-projection $\alpha_z$ of the bicharacteristic $\bigcup_{t\in \R} \exp tH_{p_1} (y_0(z),0)$ (see \cite[Formula (3.5)]{Ma2}).

In addition, if $(y',z')$ stands for local coordinates on $\Gamma\times\Sigma$, we also have (see \cite[Section 3]{Ma2}),
\be
\nabla_{y'}\Re F_\pm(w(z), z)=0\quad ;\quad \det \nabla_{y'}\nabla_{z'} \Re F_\pm (z_1, w(z_1)) \not=0.
\ee

Since $P_2$ is elliptic near $\partial U \times \{0\}$, and $F_\pm$ satisfies \eqref{eikonFpm}, it can be used to construct asymptotic solutions $v_\pm =v_\pm (y,z;h)$ to the equation $(P(y,hD_y)-E)v_\pm \sim 0$. More precisely, for $y$ away from ${\mathcal C}_z$, $v_\pm$ is constructed of the form $a_\pm (y,z;h)e^{-F_\pm (y,z)/h}$, with $a_\pm$ a (vector-valued) classical analytic symbol, while, for $y$ close to ${\mathcal C}_z$, $v_\pm$ can be represented as a Airy-type integral (see \cite[Section 3]{Ma2}), and we have,
\be
\label{eqasympt}
e^{\Re F_\pm (y,z)/h }(P(y,hD_y)-E)v_\pm (y,z) =\O(e^{-\delta /h},
\ee
where $\delta$ is some positive constant, and where $z$ may vary in a complex neighborhood of $z_1$, and $y$ in a (real) neighborhood $\Omega'$ of $y_0$, that may be taken tubular around $\gamma \cup \alpha_{z_1}$, with boundary $\Gamma \cup \Sigma' \cup {\mathcal T}$, where ${\mathcal T}\cap (\gamma \cup \alpha_{z_1})=\emptyset$ and $\Sigma' \subset \R^n\backslash U$.

In addition, since $P_2$ is elliptic there, the symbol appearing in \eqref{eqasympt} is of the form,
$$
a_\pm =\left( \begin{array}{c}
a_1^\pm\\
ha_2^\pm
\end{array}\right),
$$
where $a_1^\pm$ and $a_2^\pm$ are 0-th order classical analytic symbols, with $a_1^\pm$ elliptic.

The next step consists in applying the Green formula to the (exponentially small) quantity $\la (P-E)u, v_\pm (y,\cdot) \ra_{L^2(\Omega')}-\la u, (P-E)v_\pm (y,\cdot) \ra_{L^2(\Omega')}$. Using the same notations as in \eqref{interW}-\eqref{formHS}, and setting $v_\pm =(v_1^\pm, v_2^\pm)$, this gives,
$$
\int_{\partial\Omega'}\left( \frac{\partial u}{\partial\nu}\overline{v_\pm} - u\frac{\partial \overline{v_\pm}}{\partial\nu}+(r_1\cdot \nu)(u_1\overline{v_2^\pm}-u_2\overline{v_1^\pm} )  \right)ds=\O(e^{-\delta'/h}),
$$
with $\delta'>0$ constant.

Then, using the fact that $e^{\varphi /h}u$ is exponentially small on $\Sigma'$, together with  the properties of $F_\pm$ (in particular the fact that $\Re F_\pm (y,z_1) + d(y,U)$ remains non negative, and is positive for $z$ away from  $\gamma\cup \alpha_{z_1}$: see \cite[Formula (4.2)]{Ma2}), we obtain,
\be
\label{intGammaexp0}
\int_{\Gamma}\left( \frac{\partial u}{\partial\nu}\overline{v_\pm} - u\frac{\partial \overline{v_\pm}}{\partial\nu}+ (r_1\cdot \nu)(u_1\overline{v_2^\pm}-u_2\overline{v_1^\pm} )  \right)ds=\O(e^{-\delta'/h}).
\ee
Then, using the ellipticity of $a_1^\pm$, we can write,
\be
\label{v2vsv1}
\begin{aligned}
& v_2^\pm = h\frac{a_2^\pm}{a_1^\pm} v_1^\pm =: hf_\pm v_1^\pm;\\
& h\frac{\partial {v_1^\pm}}{\partial\nu} = -\left(a_1^\pm \frac{\partial {F_\pm}}{\partial\nu}+h\frac{\partial {a_1^\pm}}{\partial\nu} \right)e^{-F_\pm /h} =: g_1^\pm v_1^\pm;\\
& h\frac{\partial {v_2^\pm}}{\partial\nu} = hf_\pm g_1^\pm v_1^\pm + h^2\frac{\partial {f}}{\partial\nu}v_1^\pm := hg_2^\pm v_1^\pm,
\end{aligned}
\ee
where $f_\pm$, $g_j^\pm$ are 0-th order classical symbols, and $g_1^\pm = -\frac{\partial {F_\pm}}{\partial\nu}+\O(h)$.
Inserting \eqref{v2vsv1} into \eqref{intGammaexp0} and multiplying by $h$, we find,
\be
\label{intGammaexp1}
\int_{\Gamma}  (G_\pm u)\overline{v_1^\pm} ds=\O(e^{-\delta'/h}),
\ee
with,
\be
\label{Gpmu}
G_\pm u:=h\frac{\partial u_1}{\partial\nu}+h^2 \overline{f_\pm}\frac{\partial u_2}{\partial\nu}-\overline{g_1^\pm}u_1 -h\overline{g_2^\pm}u_2+h(r_1\cdot \nu)(h\overline{f^\pm}u_1-u_2).
\ee

Now, we specify a little bit better our choice of $\Gamma$. We set, 
$$
f_2(y):= \Im F_+(y, z_1).
$$
Then, it can be seen that, inside $U$,  the curve $\alpha_{z_1}$ actually coincides with $\widetilde \gamma:= \bigcup_{t>0} \exp t\nabla f_2 (y_0)$ (indeed, using \eqref{eikonFpm}, we see that $\nabla_y \Re F_\pm (y,z_1)=0$ along $\widetilde \gamma$, and, setting $x(t):= \exp 2t \nabla f_2 (y_0)$ and $\xi (t):= \frac12 \dot x (t)$, we deduce that $(x(t), \xi (t))$ is solution to the Hamilton system associated with $p_1$).

Now, we make depend the hypersurface  $\Gamma$ of a parameter $t$ close enough to some fixed $t_0>0$, and we choose it in such a way that $\Gamma =\Gamma_t$ is orthogonal to $\widetilde \gamma$ at $y_t:= \exp t\nabla f_2 (y_0)$.

Observing that the constant $\delta'$ appearing in \eqref{intGammaexp1} can be taken independent of $t$, we multiply this expression by $e^{-(s -t)^2/h}$ (where $s$ is some extra complex parameter), and integrate with respect to $t$ around $t_0$. We obtain,
\be
\label{intomegaexp0}
\int_{\omega}(G_\pm u)\theta^\pm dy'dt=\O(e^{(\Im s)^2-\delta')/h}),
\ee
where  $\theta_\pm := J\overline{v_1^\pm}e^{-(s -t)^2/2h}$,  $J$ being the Jacobian coming from the change of coordinates $y\mapsto (y',t)$
(with $y'=$  Euclidean coordinates orthogonal to $\widetilde\gamma$ at $y_{t_0}$), and $\omega$ is a small volume around $y_{t_0}$.

By construction, $\theta_\pm$ is of the form,
$$
\theta_\pm (y',t; z', \tau) = b^\pm e^{i\psi_\pm (y',t; z',s)/h},
$$
where $b$ is an elliptic analytic classical symbol,  $\psi_\pm := iF_\pm +\frac{i}2(s -t)^2$, and $z'$ stands for local coordinates on the complexified  $\Sigma_\C$ of $\Sigma$.

In particular, using \eqref{HessReF} and the fact that $\nabla_{y} \Re F_\pm (y_{t_0}, z_1) =0$ and (thanks to our choice of $\Gamma_t$) $\nabla_{y'}f_2(y_{t_0})=0$, we see that, for any $\tau_0\in\R$, $\psi_\pm$ satisfies,
\be
\label{propopsipm}
\begin{aligned}
& \nabla_{(y',t)}\psi_\pm (0, t_0; z'_1, t_0 -i\tau_0) = -(0,\tau_0);\\
& \Im \nabla_{(y',t)}^2 \psi_\pm (0, t_0; z'_1, t_0 -i\tau_0) >0;\\
& \det \nabla_{(y',t)}\nabla_{z',\tau} \psi_\pm (0, t_0; z'_1, t_0 -i\tau_0)\not=0;\\
& \Phi_\pm (z',\tau):= \sup_{(y',t) \mbox{ real}} \left( -\Im \psi_\pm (0,t_0; z_1',t_0 -i\tau_0)\right)= \tau_0^2.
\end{aligned}
\ee
Therefore, according to the general theory of \cite{Sj} (see also \cite[Appendix a]{HeSj3}, the estimate \eqref{intomegaexp0} expresses the fact that, for any $\tau_0\in \R$, we have,
\be
\label{pasMS1}
(y_{t_0}; 0,\tau_0) \notin MS(G_\pm u). 
\ee
Now, since $P_2$ is elliptic on $\partial U\times \{0\}$, we can construct a microlocal parametrix of $P_2-\rho$ near $(y_{t_0},0)$, and from the system $(P-\rho)u=0$ we deduce,
\be
\label{u2Qu1}
u_2 =hQu_1
\ee
where $Q$ a 0-th order analytic pseudodifferential operator. Therefore, we see on \eqref{Gpmu} that $G_\pm u$ can be microlocally re-written as,
$$
G_\pm u \sim (1+hA_\pm)h\frac{\partial u_1}{\partial \nu}-(\overline{g_1^\pm}+hB_\pm)u_1,
$$
where $A_\pm$ and $B_\pm$ are 0-th order analytic pseudidifferential operators (here, the symbol ``$\sim$'' means that the identity is valid in a microlocal sense only, and we refer to \cite[Appendix]{HeSj3} for details on this notion).

Since elliptic operators do not change the microsupport, by taking a parametrix of $(1+hA_\pm)$ we see that \eqref{pasMS1} is equivalent to,
\be
\label{pasMS2}
(y_{t_0}; 0,\tau_0) \notin MS\left( h\frac{\partial u_1}{\partial \nu}-(\overline{g_1^\pm}+h\widetilde B_\pm)u_1\right),
\ee
still with $\widetilde B_\pm$ a 0-th order analytic pseudidifferential operator. Taking the difference between the two functions in \eqref{pasMS2} (that is, the one with the $+$ index and the one with the $-$ index), we deduce,
\be
\label{pasMS3}
(y_{t_0}; 0,\tau_0) \notin MS\left( (\overline{g_1^+ -g_1^-}+h\widetilde B_+ -h\widetilde B_-))u_1\right),
\ee
Now, we observe on \eqref{v2vsv1} that the symbols $g_1^+$ and $g_1^-$ are such that $\Im g_1^\pm$ is  elliptic (this is because $\partial_\nu f_2(y_{t_0})\not=0$), and  $\Im g_1^- = -\Im g_1^+ + \O(h)$. In particular, $g_1^+ -g_1^-$ is elliptic, and \eqref{pasMS3} implies that, for all $\tau_0\in \R$, we have,
\be
\label{pasMS4}
(y_{t_0}; 0,\tau_0) \notin MS(u_1).
\ee
Since, for all $\varepsilon >0$,  $u_1$ is $\O(e^{\varepsilon /h})$ uniformly near $y_0$, and $u_1$ is solution to $(P_1-\rho-h^2W^*)u_1=0$ there, by standard results on the propagation of the micro-support, we know that $MS(u_1)$ is invariant under the Hamilton flow of $p_1$. In particular, taking $\tau_0=\sqrt{ E-V_1(y_{t_0})}$ (so that $(y_{t_0}; 0,\tau_0)$ is in $\{ p_1 =E\}$ and it can be joined to $(y_0,0)$ by a bicharacteristic of $p_1$), we conclude that $(y_0,0)\notin MS(u_1)$.
By \eqref{u2Qu1} this also implies that $(y_0,0) \notin MS(u_2)$, and therefore, 
\be
\label{pasMS5}
(y_0,0) \notin MS(u).
\ee
Recall that here, $y_0$ is the point of $\partial U=\partial U(E):=\{ V_1 =E\}$ where the modified geodesic $\gamma =\gamma (E)$ reaches $U(E)$, and that, in \eqref{pasMS5}, $E$ is considered as an extra ($h$-independent) parameter. However, since all the previous estimates are uniform with respect to $E$ small enough, and since $MS(u)$ is a closed subset of $\R^{2n}$, we can particularize to $E=E(h):=\Re \rho (h)$, and finally obtain (this time with the original quantities),
$$
(\gamma \cap \partial U)\times \{0\} \,\,  \cap \, \, MS(u) \, =\, \emptyset.
$$
But, by Remark \ref{remhypMS} (and the fact that $u$ and $u_0$ are exponentially close to each other near $y_0$), this is in contradiction with Assumption 4, and therefore Theorem \ref{mainth} is proved.

\section{Proof of Theorem \ref{convmainth}}

The fact that Assumption 4 is not satisfied means that, for any $\gamma \in G$, there exists $\delta_0 >0$ and a neighbourhood ${\mathcal W}$ of $\gamma \cap \partial U$, such that 
\be
\label{exponsmall}
\Vert u\Vert_{L^2({\mathcal W})\oplus L^2({\mathcal W})} =\O(e^{-\delta_0 /h}).
\ee
In addition, since $\partial U$ is compact, the value of $\varepsilon_0$ can be taken independent of $\gamma$. 

By \eqref{formimrho}, it is enough to prove that $v:=e^{\varphi /h}u$ remains exponentially small first along $\gamma$, and then along $\Pi_x \exp tH_{p_2}((\gamma \cap \partial {\mathcal M})\times\{0\})$ ($t$ small enough).

At first, using \eqref{exponsmall}, we see that for $t>0$ small enough  (that is, for $\gamma (t)$ is sufficiently close to $\gamma\cap U$), we have $(\gamma(t),0)\notin MS(v)$. Then, by the same argument of propagation used for \eqref{propag}, we see that this property remains valid for all $t\in [0, t_1)$, where $t_1$ is the first time for which $\gamma (t)$ reaches the crest $\partial\Omega_0$. Hence, using $MS(v)\cap [(\gamma\cap \Omega_0)\times \R^n] \subset \{ \xi =0\}$, we deduce the existence, for any $t'_1<t_1$, of $\delta_1=\delta_1(t'_1)>0$ and a neighbourhood ${\mathcal W}_1$ of $\gamma(t'_1)$ such that,
\be
\label{exponsmall1}
\Vert e^{\varphi /h}u\Vert_{L^2({\mathcal W}_1)\oplus L^2({\mathcal W}_1)} =\O(e^{-\delta_1 /h}).
\ee
Fixing $t'_1<t_1$ sufficiently close to $t_1$, and taking local Euclidean coordinates $(x',x_n)$ centered at $\gamma (t_1)$ such that $\dot \gamma (t_1)$ is in $\{ x'=0,\, x_n>0\}$,  we consider the weight-function,
\be
\label{defpsiAgmon}
\psi (x) := \varepsilon_1 (1- \varepsilon_2^{-2}|x'|^2)\varphi_0+ (1-\varepsilon_3)\varphi (x),
\ee
where $\varphi_0:= \varphi (\gamma (t_1))$ and $\varepsilon_1, \varepsilon_2, \varepsilon_3 >0$ are small enough. Denoting by $(x'_1, x_n^1)$ the corresponding coordinates of $\gamma (t_1')$, we have $x_n^1 <0$, $x_n^1\sim t'_1-t_1$, $x_1'=\O( (t_1-t'_1)^2)$, and  $\varphi (0,x_1') <\varphi (0)$.

Then, , we consider the open set,
$$
\omega := \{ |x'|<\varepsilon_2\, ;\, |x_n| < |x_n^1| \},
$$
and, setting $\varphi_+:= \min \{ \varphi (x)\, ; \, x_n=-x_n^1\, , \, x\in \omega\} >\varphi_0$, we observe,
\begin{itemize}
\item On $|x'| =\varepsilon_2$: We have $\psi (x) = (1-\varepsilon_3)\varphi (x) < \varphi (x)$;
\item On $x_n = x_n^1$: $\psi (x) \leq \varepsilon_1\varphi_0 +(1-\varepsilon_3)\varphi (x)\leq \varphi(x) + \delta_1$ if we have chosen $\varepsilon_1\leq \delta_1/\varphi_0$;
\item On $x_n =-x_n^1$: $\psi (x) \leq \varepsilon_1\varphi_0 +(1-\varepsilon_3)\varphi (x)< \varphi (x)$ if we have chosen $\varepsilon_1< \varepsilon_3(\varphi_+/\varphi_0)$;
\item On $\omega_\varepsilon$: $|\nabla \psi (x)|^2 = (1-\varepsilon_3)^2|\nabla \varphi (x)|^2 + \O(|\varepsilon_1/\varepsilon_2|^2)\leq (1-\varepsilon_3)|\nabla\varphi(x)|^2$ if we have chosen $\varepsilon_3 <<1$ and $\varepsilon_1 << \varepsilon_2$;
\item $\psi (0) = (1+\varepsilon_1 -\varepsilon_3)\varphi_0 > \varphi_0$ if we have chosen $\varepsilon_1 > \varepsilon_3$.
\end{itemize}
In particular, taking $\varepsilon_1 \in (\varepsilon_3,  \varepsilon_3(\varphi_+/\varphi_0))$ and $\varepsilon_3 << \varepsilon_2 << 1$,  on $\partial \omega$ we have $e^{\psi /h}u =\O(1)$. Thus, performing Agmon estimates on $\omega$ (by using Green's formula), we easily conclude,
$$
\Vert h\nabla (e^{\psi /h} u)\Vert_{L^2(\omega)} +\Vert e^{\psi /h} u\Vert_{L^2(\omega)} =\O(1).
$$
Since $\psi (0) >\varphi (\gamma (t_1))$, we deduce the existence of $\delta_2 >0$ such that $e^{\varphi /h}u =\O(e^{-\delta_2/h}$ near $\gamma (t_1)$.

Again, as before we can propagate this information along $\gamma$ up to the next point where $\gamma$ crosses $\partial\Omega_0$, where the same argument applies. Iterating the procedure, we reach in this way any point of gamma, except its final end-point $\{ x_0\} :=\gamma\cap\partial{\mathcal M}$.

In particular, if we fix $\widetilde x_0 \in \gamma \backslash \{ x_0\}$ close enough to $x_0$, then there exists $\delta>0$ such that $e^{(\varphi + 2\delta)/h} u$ is bounded near $\widetilde x_0$.

In a spirit similar to that of \eqref{defpsiAgmon}, for $\varepsilon_1, \varepsilon_2, \varepsilon_3 >0$ small enough, we set,
$$
\widetilde\varphi (x) = \min \left(\varphi (x) + \varepsilon_1\, ; \, S_0 + d(x,{\mathcal M}) \right),
$$
and we define, 
$$
\widetilde \psi (x) := \varepsilon_2 (1- \varepsilon_3^{-2}|x'|^2)S_0+ (1-\varepsilon_2)\widetilde \varphi (x),
$$
where, this time, $(x',x_n)$ are Euclidean coordinates centered at $x_0$, with $\{ x_n=0\} = T_{x_0}\partial {\mathcal M}$ and $\widetilde x_0 \in \{ x_n<0\}$.

We have,
\begin{itemize}
\item On $\{ |x'| = 2\varepsilon_3\}$: $\widetilde \psi (x) =-3\varepsilon_2S_0 +(1-\varepsilon_2)\widetilde \varphi (x)\leq \varphi (x) + \varepsilon_1-3\varepsilon_2S_0 $, and thus, $\widetilde \psi (x)\leq \varphi (x)$ if $\varepsilon_1 \leq3\varepsilon_2S_0$;
\item On $\{ x_n = \widetilde x_n^0\}$ (where $\widetilde x_n^0<0$ is the last coordinate of $\widetilde x_0$): $\widetilde \psi (x)\leq \varepsilon_2S_0 +\varphi (x)+\varepsilon_1\leq \varphi(x)+\delta$ if we have taken $\varepsilon_2S_0+\varepsilon_1 \leq \delta$;
\item On $\{ x_n =| \widetilde x_n^0|\}$: $\widetilde \psi (x)\leq S_0$;
\item On $\{ |x'|< 2\varepsilon_3\, , \, |x_n| < | \widetilde x_n^0|\}$: $|\nabla \widetilde\psi(x)|^2 = (1-\varepsilon_2)^2|\nabla\widetilde\varphi (x)|^2 +\O(|\varepsilon_2/\varepsilon_3|^2)\leq (1-\varepsilon_2)|\nabla\widetilde\varphi (x)|^2$ if we have taken $\varepsilon_2 << \varepsilon_3$;
\item On $\{ \varphi (x) + \varepsilon_1 = S_0 + d(x,{\mathcal M}\}\cap \{ |x'|\leq \varepsilon_3\}$: Since $d(x,{\mathcal M}\geq S_0 -\varphi (x)$, on this set we have $\varphi (x) \geq S_0 -\varepsilon_1/2$, and thus $\widetilde \varphi (x) \geq S_0+\varepsilon_1/2$. Therefore $\widetilde \psi (x)\geq (1-\varepsilon_2)(S_0+\varepsilon_1/2)$, and then, if we have taken $\varepsilon_1 >2\varepsilon_2S_0/(1-\varepsilon_2)$, we obtain $\widetilde \psi (x)>S_0$ there.
\end{itemize}

Hence, using the fact that $|\nabla\widetilde\varphi (x)|^2\leq V_2(x)$ almost everywhere, and performing as before Agmon estimates on $\widetilde \omega:= \{ |x'|< 2\varepsilon_3\, , \, |x_n| < | \widetilde x_n^0|\}$, we obtain,
$$
\Vert h\nabla (e^{\widetilde\psi /h} u)\Vert_{L^2(\widetilde\omega)} +\Vert e^{\widetilde\psi /h} u\Vert_{L^2(\widetilde\omega)} =\O(e^{\varepsilon /h}).
$$
for all $\varepsilon >0$, and thus, thanks to the last property of $\widetilde\psi$, denoting by $\omega_1$ a small enough neighborhood of $\{ \varphi (x) + \varepsilon_1 = S_0 + d(x,{\mathcal M}\}\cap \{ |x'|\leq \varepsilon_3\}\cap \widetilde\omega$,
$$
\Vert u\Vert_{H^1(\omega_1)} =\O(e^{-(S_0+\delta_1)/h}),
$$
for some constant $\delta_1>0$.

This can be done along any $\gamma \in G$, and since we already know that $e^{S_0/h}u$ is exponentially small near $\partial{\mathcal M}\backslash \bigcup_{\gamma\in G} (\gamma \cap \partial{\mathcal M})$, we conclude that there exists a neighborhood ${\mathcal V}$ of $\partial{\mathcal M}$ such that, for any $\mu >0$ small enough, $e^{S_0/h}u$ (together with all its derivatives) is exponentially small on ${\mathcal V}\cap \{ V_2=-\mu\}$.

Then, setting $\Omega_2:=\widehat I \cap \{ V_2<-\mu\}$ and applying the Stokes formula on $\Omega_2$, we obtain,
\be
\label{formHSbis}
(\Im \rho)\Vert u\Vert^2_{L^2(\Omega_2)} = -h^2\Im  \int_{\partial\Omega_2} \frac{\partial u}{\partial\nu}\cdot \overline{u}ds + h^2\Im\int_{\partial\Omega_2}(r_1\cdot \nu)u_2\overline{u_1}ds,
\ee
and the result follows.

\section{About examples}
In contrast with the case of highly excited shape resonances (see \cite{DaMa}) where examples are not easy to construct, here, because of the presence of two potentials, the task is simpler. 

A first type of examples can be made by considering a rotationally invariant potential $V_1$. In that case, it is well known that the eigenvalues of $P_1$ corresponding to rotationally invariant eigenfunctions are separated by a gap of order $h^2$ from the rest of the spectrum. Since the eigenvalues of $\widetilde P$ (defined in \eqref{Ptilde}) coincide with those of $P_1$ up to some $\O(\Vert W\Vert^2 h^2)$, if the interaction $W$ is sufficiently small, the same gap will hold for the corresponding eigenvalues of $\widetilde P$, so that  Assumption 3 will be satisfied for them. In addition, one can see that the corresponding eigenfunctions of $\widetilde P$ are close, up to $\O(h^2)$, to those of $P_1$. On the other hand, the normalized rotationally invariant eigenfunctions of $P_1$ can be constructed by means of one dimensional (WKB or so) methods (see, e.g., \cite{Ya}), and near the boundary of the well $U$ (that corresponds to turning points), it can be seen that they are of size at least 1. Therefore, the same will be true for the corresponding eigenfunctions of $\widetilde P$, and Assumption 4 will be satisfied, too. Concerning Assumption 5, it strongly depends on the choice of the potential $V_2$, and one can for instance take it in such a way that the crest of the cirque realizes its (Euclidean) distance with the well at only one point, with sufficient non degeneracy in order to satisfy Assumption 5.

Another kind of examples may be given by taking $V_1$ with separated variables. For instance, in dimension 2, one may take $V_1(x,y) = v_1(x) + v_2(y)$, where $v_2^{-1}(0)$ is a bounded interval with non empty interior, while $v_1^{-1}(0) =\{0\}$. In that case, as  in \cite[Section 7]{Ma2}, one can construct sequences of values of $h$ for which some eigenvalues of $P_1$ that are close to 0 are separated by a gap of order $h^3$ from the rest of its spectrum, and the corresponding eigenfunctions are concentrated on $x=0$. If in addition the interaction $W$ vanishes at $x=0$, then, the arguments of the previous paragraph can be repeated, and give other examples where Theorem \ref{mainth} applies, for instance if $V_2$ is chosen in such a way that the minimal geodesics between the well and the sea starts on $x=0$. If, on the contrary, $V_2$ is chosen in such a way that any minimal geodesic $\gamma$ between the well and the sea starts away from $x=0$, then the eigenfunction is exponentially small on $\gamma\cap U$, and Assumption 4 is not satisfied anymore. In that case, Theorem \ref{convmainth} applies and shows that the width of the corresponding resonance is exponentially smaller that $e^{-2S_0/h}$.

\appendix
\vskip 1cm

\bigskip


{}

\end{document}